\documentclass[a4paper,reqno,11pt]{amsart}

\usepackage[all]{xy}
\usepackage[english]{babel}
\usepackage{graphicx}
\usepackage{hhline}
\usepackage{xcolor}
\usepackage{wasysym}
\usepackage{amsmath,amsthm,amssymb}
\usepackage{verbatim}
\usepackage{geometry}
\usepackage{algorithm}
\usepackage{algorithmic}

\geometry{
	a4paper,
	total={170mm,257mm},
	left=20mm,
	top=20mm,
}

\linespread{1.2}
\setlength{\parindent}{0pt}
\setlength{\parskip}{.25em}
\usepackage{hyperref}
\hypersetup{
    colorlinks,
    linkcolor={red!50!black},
    citecolor={blue!50!black},
    urlcolor={blue!80!black}
}

\usepackage{enumerate}

\usepackage{listings}

\definecolor{dkgreen}{rgb}{0,0.6,0}
\definecolor{gray}{rgb}{0.5,0.5,0.5}
\definecolor{mauve}{rgb}{0.58,0,0.82}

\lstset{frame=tb,
  language=Python,
  aboveskip=3mm,
  belowskip=3mm,
  showstringspaces=false,
  columns=flexible,
  basicstyle={\small\ttfamily},
  numbers=none,
  numberstyle=\tiny\color{gray},
  keywordstyle=\color{blue},
  commentstyle=\color{dkgreen},
  stringstyle=\color{mauve},
  breaklines=true,
  breakatwhitespace=true,
  tabsize=3
}

\newcommand{\p}{\mathbb{P}}

\newcommand{\N}{\mathbb{N}}
\newcommand{\Z}{\mathbb{Z}}
\newcommand{\Q}{\mathbb{Q}}
\newcommand{\C}{\mathbb{C}}
\newcommand{\R}{\mathbb{R}}

\newcommand{\vu}{\varnothing}

\newcommand{\T}{T}

\newcommand{\de}{\partial}

\newcommand{\gr}{\mathop{\rm Gr}\nolimits}
\newcommand{\oo}{\mathcal{O}}

\newcommand{\Hilb}{\mathop{\rm Hilb}\nolimits}
\newcommand{\Bl}{\mathop{\rm Bl}\nolimits}

\newcommand{\Sing}{\mathop{\rm Sing}\nolimits}
\newcommand{\codim}{\mathop{\rm codim}\nolimits}
\newcommand{\Sym}{\mathop{{\rm Sym}}\nolimits}
\newcommand{\Span}[1]{\langle#1\rangle}
\newcommand{\ov}[1]{\overline{#1}}

\newcommand{\reg}{R}
\newcommand{\irr}{\mathop{\rm Sing}\nolimits}
\newcommand{\SO}{\mathop{\rm SO}\nolimits}

\newcommand{\GL}{\mathop{\rm GL}\nolimits}

\newcommand{\Eig}{\mathop{\rm Eig}\nolimits}

\theoremstyle{plain}

\newtheorem{theorem}{Theorem}[section]
\newtheorem{proposition}[theorem]{Proposition}
\newtheorem{lemma}[theorem]{Lemma}   
\newtheorem{corollary}[theorem]{Corollary}

\theoremstyle{definition}

\newtheorem{example}[theorem]{Example}
\newtheorem{conjecture}[theorem]{Conjecture}
\newtheorem{thmx}{Theorem}
 
\newtheorem{definition}[theorem]{Definition}

\newtheorem{remark}[theorem]{Remark}

\title{Eigenschemes of Ternary Tensors}
\author
{Valentina Beorchia, Francesco Galuppi and Lorenzo Venturello}

\address
{Department of Mathematics and Geosciences,
  University of Trieste,
via Valerio 12/1, 34127 Trieste, Italy,
ORCID 0000-0003-3681-9045.}
\email{beorchia@units.it}

\address{Department of Mathematics and Geosciences, University of Trieste,
via Valerio 12/1, 34127 Trieste, Italy, ORCID 0000-0001-5630-5389.}
\email{fgaluppi@units.it}

\address{KTH Royal Institute of Technology, Lindstedtsv\"{a}gen 25, 11428 Stockholm, Sweden, ORCID 0000-0002-6797-5270.}
\email{lven@kth.se}

\subjclass[2010]{13P25, 14M12, 14C21, 15A69, 15A72}

\begin{document}

	\maketitle

\begin{abstract}
We study projective schemes arising from eigenvectors of tensors, called eigenschemes. After some general results, we give a birational description of the variety parametrizing eigenschemes of general ternary symmetric tensors and we compute its dimension. Moreover, we characterize the locus of triples of homogeneous polynomials defining the eigenscheme of a ternary symmetric tensor. Our results allow us to implement algorithms to check whether a given set of points is the eigenscheme of a symmetric tensor, and to reconstruct the tensor. Finally, we give a geometric characterization of all reduced zero-dimensional eigenschemes. The techniques we use rely both on classical and modern complex projective algebraic geometry.
\end{abstract}

\section{Introduction}
Tensors are natural generalizations of matrices in higher dimension. Just as matrices are crucial in linear algebra, tensors play the same role in multilinear algebra, and find applications in several branches of mathematics, as well as many applied sciences. And, just as for matrices, it is possible to give a notion of eigenvectors and eigenvalues for tensors, as introduced independently in 2005 by Lim  \cite{Lim} and Qi  \cite{Qi}. 
As we shall soon see, when dealing with eigenvectors it is not restrictive to focus on partially symmetric tensors. In this paper, a partially symmetric tensor is an element of $\Sym^{d-1}\C^{n+1}\otimes\C^{n+1}$.

Let $T$ be a partially symmetric tensor. If we choose a basis for $\C^{n+1}$, then we can identify $T$ with a tuple of homogeneous polynomials $(g_0,\dots,g_n)$. An \emph{eigenvector} of $T$ is a vector $v$ such that

\begin{equation}\label{eq: definizione di autovettore}
(g_0(v), \ldots, g_n(v))=\lambda v\end{equation}for some constant $\lambda$. Such vectors are called \emph{E-eigenvectors} in \cite[Section 2.2.1]{QZ}. Since 
property \eqref{eq: definizione di autovettore} is preserved under scalar multiplication, it is natural to consider the rational map $T:\p^n\dashrightarrow\p^n$ defined by $T(P)=(g_0(P):\ldots:g_n(P))$ and regard eigenvectors as points in $\p^n$; hence the name \emph{eigenpoints} instead of eigenvectors. In the symmetric case we define the eigenpoints of a homogeneous polynomial $f$ as the fixed points of the polar map, or equivalently the eigenpoints of the partially symmetric tensor $\nabla f =(\de_0f,\dots,\de_n f)$.

Tensor eigenpoints appear naturally in optimization.  As an example, consider the problem of maximizing a polynomial function $f$ over the unit sphere in $\R^{n+1}$. Eigenvectors of the symmetric tensor $f$ are critical points of this optimization problem. Another interesting framework in which eigenvectors of symmetric tensors arise is the variational context: by Lim's Variational Principle \cite{Lim}, given a symmetric tensor $f$, the critical rank one symmetric tensors for $f$ are exactly of the form $v^d$, where $v$ is an eigenvector of $f$. 
This has applications in low-rank approximation of tensors (see \cite{OttSod}) as well as maximum likelihood estimation in algebraic statistics. Finally, in \cite{OO} Oeding and Ottaviani employ eigenvectors of tensors to produce an algorithm to compute Waring decompositions of homogeneous polynomials.

\begin{remark}
\label{rmk: scelta di una base ortonormale}\label{rmk: dipende dalla scelta di una base}
The property of being an eigenvector is invariant under the action of the group $\SO_{n+1}(\R)$ of orthogonal transformations, as proven in \cite[Theorem 2.20]{QZ}. However, it is important to stress that it is not invariant under linear transformations. For this reason, when we speak of \textquotedblleft the eigenscheme of a tensor $T$\textquotedblright we are abusing terminology. In order to avoid ambiguities, we choose once and for all a basis $\{v_0,\dots,v_n\}$ of $\C^{n+1}$ such that $v_0,\dots,v_n$ are real vectors and they are orthonormal with respect to the Euclidean scalar product in $\R^{n+1}$. With this choice we identify the space $\Sym^d\C^{n+1}$ of symmetric tensors with the space $\C[x_0,\dots,x_n]_d$ of degree $d$ homogeneous polynomials. In the same way we identify the space $\Sym^{d-1}\C^{n+1}\otimes\C^{n+1}$ with the space $(\Sym^{d-1}\C^{n+1})^{\oplus (n+1)}$. In order to remember this important choice, we always think of partially symmetric tensors as tuples of polynomials. Once the basis is chosen, we can safely compute eigenvectors as in \eqref{eq: definizione di autovettore}.
\end{remark}
Because of their many real-life applications, and since they are not linear invariant, one might argue that the right setting to study eigenvectors is affine Euclidean geometry over the real numbers. Nevertheless, we will study them through the lenses of complex projective geometry. One of the reasons is that being an eigenpoint of a tensor is an algebraic condition, described by the vanishing of minors of suitable matrices. 

\begin{definition}\label{def:eigenscheme}
Let $T= (g_0, \ldots, g_n)$ be a partially symmetric tensor. The \emph{eigenscheme} of $T$ is the closed subscheme $E(T)\subseteq\p^n$ defined by the $2 \times 2$ minors of
	\[\left(
	\begin{matrix}
	x_0 & x_1 & \dots & x_n \\
	g_0 & g_1 & \dots & g_n
	\end{matrix}\right).
	\]
	When $T$ is symmetric, that is, a homogeneous polynomial $f$, we have $g_i=\de_if$ and we  denote its eigenscheme by $E(f)$.
\end{definition}

\begin{remark}\label{non è restrittivo considerare i parzialmente simmetrici}
It is possible to define the eigenscheme of a tensor which is not partially symmetric, as in \cite[Definition 1.1]{CartSturm} or in \cite[Section 1]{ASS}. However, with the choice of a basis described in Remark \ref{rmk: dipende dalla scelta di una base}, it is apparent from the definition that for every tensor $T$ there exists a partially symmetric tensor $T^\prime$ such that $E(T^\prime)=E(T)$. For this reason, it is not restrictive to consider partially symmetric tensors. When we discuss properties of elements in $(\C^{n+1})^{\otimes d}$, it is understood that the basis is chosen.
\end{remark}

Eigenpoints have also connections to dynamical systems. If we call $T:\p^n\dashrightarrow\p^n$ the rational map defined by $T(P)=(g_0(P):\ldots:g_n(P))$, then $P$ is an eigenpoint if and only if either $P$ is a fixed point for $T$ or $T$ is not defined at $P$. In order to make this distinction precise, we will use the notion of regular eigenscheme, introduced in \cite[Definition 2.3]{CGKS} by \c{C}elik, Galuppi, Kulkarni and Sorea (see Definition \ref{def: regular eigenscheme}).

The basic questions about eigenschemes concern their dimension, their degree and, in the zero-dimensional case, their configuration. The case $d=2$ of matrices is discussed by Abo, Eklund, Kahle and Peterson in \cite{matrici}. Other partial results are proved by Abo, Seigal and Sturmfels in \cite{ASS}. The eigenscheme of a  general symmetric tensor of format $3\times 3\times\ldots\times 3$ is zero-dimensional, and its degree $d^2-d+1$ was first computed in \cite{Kuz} by Kuznetsov and Kohlshevnikov. 
It is particularly interesting to look at tensors with unusual or pathological properties, such as having eigenpoints with higher multiplicity, or a higher dimensional eigenscheme. This was addressed in \cite[Section 4]{ASS} and in \cite{Abo}.

For the convenience of the reader, we present here the structure of the paper and we summarize our main contributions. In Section \ref{sec: basic}, we use basic algebraic geometry to gain some knowledge on the general geometry of eigenschemes. For a general tensor, we give a sharp bound on the number of eigenpoints which can lie on a linear space (Proposition \ref{prop: configurations}) and we describe an infinite family of polynomials with a positive-dimensional regular eigenscheme (Proposition \ref{thm: eigenscheme 1-dim all degs}). Section \ref{sec: ternary forms} focuses on ternary forms. The following theorem collects the results from Proposition \ref{pro: se d è dispari l'autoschema generico identifica f} and Corollary \ref{cor: Eig is rational}.

\begin{thmx}
Let $f,g\in\C[x_0,x_1,x_2]_d$ be general homogeneous polynomials and let $q=x_0^2+x_1^2+x_2^2$ be the isotropic conic. Then $E(f)=E(g)$ if and only if $g=\lambda f+\mu q^k$ for some $\lambda \in\C^*$ and $\mu\in\C$. Moreover, the variety parametrizing zero-dimensional eigenschemes of ternary forms is rational and has dimension
\[\begin{cases}
  \binom{d+2}{2}-1 & \mbox{ if $d$ is odd}\\
  \binom{d+2}{2}-2 & \mbox{ if $d$ is even}.
\end{cases}\]
\end{thmx}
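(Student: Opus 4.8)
The plan is to first establish the characterization $E(f)=E(g)\iff g=\lambda f+\mu q^{k}$ (with $d=2k$, the term $\mu q^{k}$ being absent when $d$ is odd) and then to read off the dimension and rationality from the fibers of the map $[f]\mapsto E(f)$. The \emph{only if} direction rests on the Hilbert--Burch theorem. In three variables the three generators of the eigenscheme ideal are the entries $F_0,F_1,F_2$ of $x\times\nabla f$, that is, the maximal minors of the $3\times 2$ matrix
\[
M_f=\begin{pmatrix} x_0 & \partial_0 f\\ x_1 & \partial_1 f\\ x_2 & \partial_2 f\end{pmatrix},
\]
and they satisfy the two syzygies $x_0F_0+x_1F_1+x_2F_2=0$ and $\partial_0f\,F_0+\partial_1f\,F_1+\partial_2f\,F_2=0$ coming from the two columns. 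For general $f$ the eigenscheme is zero-dimensional, so $I_2(M_f)$ has codimension $2$ and defines an arithmetically Cohen--Macaulay scheme; by Hilbert--Burch the displayed complex is its minimal free resolution and $M_f$ is its Hilbert--Burch matrix. First I would record the graded degrees: the three generators have degree $d$, the first syzygy has degree $d+1$ and the second degree $2d-1$.

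Next I would exploit the uniqueness of the Hilbert--Burch matrix up to row and column operations. If $E(f)=E(g)$ then $M_g=A\,M_f\,B$ for some $A\in\GL_3(\C)$ and some graded-invertible $B$. Assume $d\ge 3$ (the case $d=2$ is the classical statement for symmetric matrices, where adding a multiple of the identity does not change the eigenvectors). Since all rows of $M_f$ share the bidegree $(1,d-1)$, the matrix $A$ is constant; and since $2d-1>d+1$, the degrees force $B=\left(\begin{smallmatrix}\beta_{11}&c\\0&\beta_{22}\end{smallmatrix}\right)$ with $\beta_{11},\beta_{22}\in\C^*$ and $c$ a form of degree $d-2$ (the entry in position $(2,1)$ must vanish, while $B_{12}=c$ is precisely the slot allowed by the degree gap). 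Matching the first columns forces $\beta_{11}Ax=x$, hence $A=\beta_{11}^{-1}\Id$; matching the second columns then gives $\nabla g=\lambda\nabla f+\gamma\,x$ with $\lambda=\beta_{22}/\beta_{11}$ and $\gamma$ of degree $d-2$. Thus $h:=g-\lambda f$ satisfies $\nabla h\parallel x$ identically, i.e. $h$ lies in the kernel of the linear map $\Phi\colon f\mapsto x\times\nabla f$. A short computation using Euler's relation (from $x_j\partial_i h=x_i\partial_j h$ one gets $q\,\partial_i h=(\deg h)\,x_i h$, whence $q\mid h$, and one concludes by induction) shows that $\ker\Phi=\langle q^{d/2}\rangle$ when $d$ is even and $\ker\Phi=0$ when $d$ is odd. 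This yields $g=\lambda f$ for odd $d$ and $g=\lambda f+\mu q^{d/2}$ for even $d$. The \emph{if} direction is immediate: $\nabla(q^{k})=2k\,q^{k-1}x$ is a multiple of the first column of $M_f$, so adding $\mu q^{k}$ (when $d=2k$) leaves all $2\times2$ minors unchanged, while rescaling $f$ merely rescales them.

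For the dimension and rationality, consider the rational map
\[
\varepsilon\colon\p\big(\C[x_0,x_1,x_2]_d\big)\dashrightarrow\Hilb_{d^2-d+1}(\p^2),\qquad [f]\mapsto E(f),
\]
whose image closure is the parameter variety in question. By the characterization just proved, the fiber of $\varepsilon$ over $E(f)$ is, for odd $d$, the single point $[f]$, so $\varepsilon$ is generically injective and the parameter variety is birational to $\p^{\binom{d+2}{2}-1}$; for even $d$ the fiber is the line spanned by $[f]$ and $[q^{d/2}]$, so $\varepsilon$ factors birationally through the linear projection of $\p^{\binom{d+2}{2}-1}$ away from the fixed point $[q^{d/2}]$, whose image is $\p^{\binom{d+2}{2}-2}$. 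In both cases the parameter variety is rational of the asserted dimension.

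I expect the main obstacle to be the genericity input needed to pin down the resolution: one must verify that for general $f$ the eigenscheme is reduced and zero-dimensional and that the three minors form a minimal generating set with syzygy degrees \emph{exactly} $d+1$ and $2d-1$, so that the equivalence group acting on Hilbert--Burch matrices is the restricted one described above. This is the crux, since it is precisely the off-diagonal slot $B_{12}$ of degree $d-2$ that produces, through $\ker\Phi$, the extra $q^{d/2}$ term in even degree and hence the drop by one in the dimension count.
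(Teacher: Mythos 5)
Your argument is correct, and it reaches the conclusion by a genuinely different route from the paper. The paper works inside the framework of \cite[Theorem 5.5]{ASS}: it first uses Lemma \ref{lem: first properties} to show $E(f)=E(f+g)$, then identifies the general fiber of $\phi_d$ with the group $H$ of Remark \ref{rmk: dimension of Eig} and computes $H$ in Lemma \ref{lem: dim of H} via the relation $(d-2)x_ig=q\,\partial_i g$; rationality is obtained separately by factoring $\phi_d$ through the linear space $T_d$ (Corollary \ref{cor: Eig is rational}). You instead invoke the uniqueness of the graded minimal free resolution of $I_{E(f)}$ (Proposition \ref{pro: saturated} in the paper, which you correctly flag as the needed genericity input): the degree gap $2d-1>d+1$ between the two syzygies forces the column change-of-basis to be triangular with a single degree-$(d-2)$ slot, and matching columns reduces everything to computing the kernel of $f\mapsto(x_i\partial_jf-x_j\partial_if)_{i<j}$, which is exactly the symmetric case of the paper's Lemma \ref{lem: due G_i nulli}. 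The computational core is the same identity $q\,\partial_ih=\deg(h)\,x_ih$ in both treatments, but your version localizes the whole phenomenon (including the parity dichotomy) in the Hilbert--Burch matrix, avoids the $E(f+g)$ trick and the $U/H$ construction, and gets rationality for free by quotienting $\p(\C[x_0,x_1,x_2]_d)$ by the linear projection from $[q^{d/2}]$. One caveat: your parenthetical dismissal of $d=2$ is not accurate --- for a general symmetric $3\times3$ matrix $M$ the locus of matrices with the same eigenpoints is the full three-dimensional space of matrices diagonal in the same basis, not just $\lambda M+\mu\Id$, precisely because at $d=2$ the two syzygy degrees coincide and the column group is all of $\GL_2$; the statement should simply be restricted to $d\ge 3$, which is the regime of interest throughout.
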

Here we find a discrepancy with \cite[Theorem 5.5]{ASS}. In Theorem \ref{thm: defining equations}, we characterize which tuples of generators arise from Definition \ref{def:eigenscheme}. The precise statement is the following.
\begin{thmx}
The subset of $(\C[x_0,x_1,x_2]_d)^{\oplus 3}$ parametrizing determinantal bases of ideals of eigenschemes of ternary forms is a linear subspace defined by
\begin{align*}
	    x_0f_0 - x_1 f_1 + x_2 f_2 = \partial_0 f_0 - \partial_1 f_1 + \partial_2 f_2 = 0.
\end{align*}
\end{thmx}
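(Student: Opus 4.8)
The set under consideration is the image of the linear map
\[
\Phi\colon \C[x_0,x_1,x_2]_d \longrightarrow (\C[x_0,x_1,x_2]_d)^{\oplus 3}, \qquad f \longmapsto (m_0,m_1,m_2),
\]
where $m_0 = x_1\partial_2 f - x_2\partial_1 f$, $m_1 = x_0\partial_2 f - x_2\partial_0 f$ and $m_2 = x_0\partial_1 f - x_1\partial_0 f$ are the $2\times 2$ minors of Definition \ref{def:eigenscheme}. Since $\Phi$ is linear, its image is automatically a linear subspace, so the content of the statement is to identify this image with the common zero locus $W$ of the two linear maps $L_1(f_0,f_1,f_2) = x_0 f_0 - x_1 f_1 + x_2 f_2$ and $L_2(f_0,f_1,f_2) = \partial_0 f_0 - \partial_1 f_1 + \partial_2 f_2$. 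The plan is to prove both inclusions, the first by a direct computation and the second by a dimension count.

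The inclusion $\im\Phi \subseteq W$ is a computation on the minors. First, $L_1(m_0,m_1,m_2)=0$ is the Laplace expansion of the determinant of the $3\times 3$ matrix obtained by stacking the row $(x_0,x_1,x_2)$ on top of the matrix in Definition \ref{def:eigenscheme}; having a repeated row it vanishes, and this holds for any partially symmetric tensor. The relation $L_2(m_0,m_1,m_2)=0$ is instead special to gradients: expanding $\partial_0 m_0 - \partial_1 m_1 + \partial_2 m_2$ and using the symmetry $\partial_i\partial_j f = \partial_j\partial_i f$ of the Hessian, one checks that every term cancels. This establishes $\im\Phi\subseteq W$.

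For the reverse inclusion it suffices to check $\dim\im\Phi = \dim W$. On one side, $\dim\im\Phi = \binom{d+2}{2} - \dim\ker\Phi$, and $f\in\ker\Phi$ exactly when all minors vanish, i.e. $\nabla f = h\cdot(x_0,x_1,x_2)$ for some $h$; differentiating this relation and using that $q = x_0^2+x_1^2+x_2^2$ is irreducible forces $f$ to be a scalar multiple of $q^{d/2}$, so $\dim\ker\Phi$ is $1$ if $d$ is even and $0$ if $d$ is odd (consistently with Theorem A). On the other side, $\dim W = 3\binom{d+2}{2} - \rank(L_1,L_2)$, and I would compute this rank through the cokernel of the combined map $(L_1,L_2)\colon (\C[x_0,x_1,x_2]_d)^{\oplus 3} \to \C[x_0,x_1,x_2]_{d+1}\oplus\C[x_0,x_1,x_2]_{d-1}$. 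Dualizing via the apolarity pairing, under which multiplication by $x_i$ is adjoint to $\partial_i$, a cokernel element becomes a pair $(A,B)\in\C[x_0,x_1,x_2]_{d+1}\times\C[x_0,x_1,x_2]_{d-1}$ with $\partial_i A + x_i B = 0$ for all $i$. Euler's relation gives $A = -\tfrac{1}{d+1}\,q\,B$, and substituting back reduces the whole system to the single equation $q\,\partial_i B = (d-1)\,x_i B$.

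The heart of the argument, and the step I expect to be most delicate, is solving this last system. Since $q$ is irreducible and coprime to each $x_i$, the equations force $q\mid B$, and writing $B = q\,B'$ yields the same system with $d$ lowered by $2$; iterating this descent by $q$ terminates in degree $1$ when $d$ is even, where it forces $B=0$, and in degree $0$ when $d$ is odd, where $B$ may be any nonzero multiple of $q^{(d-1)/2}$. Hence the cokernel is trivial for even $d$ and one-dimensional for odd $d$. Feeding these ranks into the dimension formula gives $\dim W = \tfrac{d(d+3)}{2} + \dim(\operatorname{coker})$, which equals $\binom{d+2}{2}-1$ for even $d$ and $\binom{d+2}{2}$ for odd $d$, matching $\dim\im\Phi$ in each parity. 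Combined with the inclusion of the second paragraph, this yields $\im\Phi = W$, which is the assertion.
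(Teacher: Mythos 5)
Your proof is correct, but it takes a genuinely different route from the paper's. The paper proves sufficiency constructively: from $x_0f_0 - x_1f_1 + x_2f_2 = 0$ it extracts, via the exactness of the Koszul complex (Lemma \ref{lem: i implica iii} and Remark \ref{rmk: scelta del segno per f1}), forms $g_0,g_1,g_2$ with the $f_k$ as minors; substituting into the second condition shows that the curl of $(g_0,-g_1,g_2)$ is a triple of degree $d-2$ satisfying the same two conditions, so an induction on $d$ produces an $h$ making $(g_0+x_0h,-(g_1+x_1h),g_2+x_2h)$ conservative, and integrating yields the desired $f$ without changing the minors. You instead prove only the easy inclusion $\im\Phi\subseteq W$ and equate dimensions, computing $\dim W$ through the cokernel of $(L_1,L_2)$ by apolarity. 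Your reduction of the cokernel to the single system $q\,\partial_i B = (d-1)x_i B$ and the descent by powers of $q$ are sound; note that this is, up to a degree shift, exactly the functional equation $(d-2)x_ig = q\partial_i g$ that the paper solves in Lemma \ref{lem: dim of H}, and your kernel computation for $\Phi$ is the symmetric part of Lemma \ref{lem: due G_i nulli}, so both of your ``delicate steps'' can simply be cited. What your argument buys: it is shorter, it avoids the induction, and it makes transparent the parity-dependent redundancy between the two defining equations that the paper only records a posteriori in the remark following Theorem \ref{thm: defining equations} --- for odd $d$ the cokernel is spanned, up to scale, by the pair $\left(q^{(d+1)/2},\, q^{(d-1)/2}\right)$, which is precisely the ``one condition less than expected.'' What it costs: it is non-constructive, whereas the paper's inductive proof is exactly what powers Algorithm \ref{alg: algorithm2}, which reconstructs $f$ from $(f_0,f_1,f_2)$ by solving the linear system \eqref{come trovare gradiente} in the coefficients of $f$ and $h$; a dimension count certifies existence of $f$ but gives no recipe for it.
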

It turns out that the topic of eigenschemes is directly related to some beautiful classical results due to Bateman and Laguerre, and we explore this connection in Sections \ref{sec: bateman} and \ref{sec: laguerre}. 
Roughly speaking, the generators of the ideal of the eigenscheme of a ternary form define a rational map $\p^2\dashrightarrow\p^2$. By employing both classical and modern techniques, we are able to characterize the fibers of such maps and as a consequence we deduce the possible configurations
of eigenschemes in any degree.
\begin{thmx}
Let $d\ge 3$. A set $Z\subseteq\p^2$ of $d^2-d+1$ points is the eigenscheme of a tensor in $(\C^{3})^{\otimes d}$ if and only if
\begin{enumerate}
    \item $\dim I_Z(d)=3$,
    \item $Z$ contains no $d+1$ collinear points, and
    \item for every $k\in\{2,\dots,d-1\}$, no $kd$ points of $Z$ lie on a degree $k$ curve.
\end{enumerate}
\end{thmx}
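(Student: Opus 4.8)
The plan is to treat the two implications separately, using throughout that $I_Z$ is generated by the three $2\times 2$ minors
\[
m_1=x_0g_1-x_1g_0,\qquad m_2=x_0g_2-x_2g_0,\qquad m_3=x_1g_2-x_2g_1,
\]
which satisfy the linear syzygy $x_0m_3-x_1m_2+x_2m_1=0$, precisely the relation isolated in Theorem~\ref{thm: defining equations}.

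\emph{Necessity.} Assume $Z=E(T)$. Since $Z$ is zero-dimensional, $I_Z$ is a codimension-two Cohen--Macaulay ideal, and the $2\times 3$ matrix of Definition~\ref{def:eigenscheme}, with one row of linear forms and one of forms of degree $d-1$, is a Hilbert--Burch matrix for it. The associated resolution $0\to R(-d-1)\oplus R(-2d+1)\to R(-d)^{3}\to I_Z\to 0$ yields $\dim I_Z(d)=3$ by a direct Hilbert-function count, as neither syzygy contributes in degree $d$; this is (1). For (2) and (3) I would argue by restriction: suppose an irreducible curve $C$ of degree $k$ carries the forbidden number of points, namely $d+1$ if $k=1$ and $kd$ if $2\le k\le d-1$. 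Each $m_i$ restricts to a section of $\oo_C(d)$, a line bundle of degree $kd$ on $C$. When $k=1$ the $d+1$ common zeros exceed the degree, forcing $m_i|_C=0$ and hence $C\subseteq E(T)$, impossible as $Z$ is finite. When $k\ge 2$ the $kd$ common zeros form a divisor of degree $kd=\deg\oo_C(d)$, so the three restrictions span at most a one-dimensional subspace and are therefore proportional, say $m_i|_C=c_i s$; feeding this into the syzygy gives $s\,(c_3x_0-c_2x_1+c_1x_2)|_C=0$, whence either $C\subseteq E(T)$ or the integral curve $C$ of degree $\ge 2$ lies on a line, both impossible.

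\emph{Sufficiency.} Assume (1)--(3). A pigeonhole argument shows that $Z$ lies on no curve of degree $\le d-1$: otherwise some irreducible component of degree $k\le d-1$ would carry at least $kd$ (or $d+1$ if $k=1$) of the $d^2-d+1$ points, violating (2) or (3), since the bounds over all components sum to at most $(d-1)d<d^2-d+1$. Hence $I_Z$ has no generator below degree $d$ and, by (1), exactly three in degree $d$; these span a net and define a rational map $\p^2\dashrightarrow\p^2$. The goal is to produce a basis $(f_0,f_1,f_2)$ of $I_Z(d)$ with $x_0f_0-x_1f_1+x_2f_2=0$, for then the Hilbert--Burch structure exhibits the $f_i$ as the $2\times 2$ minors of a matrix whose linear row is $(x_0,x_1,x_2)$, and the other row is a tensor $T$ with $E(T)=Z$. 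Equivalently, the Hilbert--Burch matrix of $I_Z$ must carry a row of three \emph{linearly independent} linear forms $\ell_0,\ell_1,\ell_2$; writing $(\ell_0,\ell_1,\ell_2)=(x_0,x_1,x_2)L$ for an invertible constant matrix $L$, the change of generators $L^{-1}\in\GL_3$ — an operation internal to the resolution that leaves $Z$ unchanged, and crucially not a coordinate change on $\p^2$ — normalizes that row to $(x_0,x_1,x_2)$ and finishes the proof.

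The hard part is the geometric input underlying both the reducible case of (3) in the necessity and the independence of $\ell_0,\ell_1,\ell_2$ in the sufficiency. Both reduce to controlling which curves are contracted by the map $\p^2\dashrightarrow\p^2$ defined by the net $I_Z(d)$: a union of lines carrying $kd$ eigenpoints would be a family of contracted lines, while a linear dependence among the $\ell_i$ would likewise force an unexpected contracted locus and a degenerate fiber. Bounding and describing these fibers is exactly the information extracted from the classical results of Bateman and Laguerre in Sections~\ref{sec: bateman} and~\ref{sec: laguerre}, which I would invoke as a black box to rule out the excess contractions and thereby complete both directions.
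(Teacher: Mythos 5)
Your outline has the right skeleton, and two pieces are genuinely nice: the restriction argument for an \emph{integral} curve $C$ of degree $k$ (three sections of $\oo_C(d)$ vanishing on a length-$kd$ divisor must be proportional, and the Koszul syzygy then forces $C$ into a line) is a clean, more elementary substitute for part of the paper's analysis of the Laguerre map in Theorem \ref{thm: facile}; and the pigeonhole showing $Z$ lies on no curve of degree $\le d-1$ is correct. But there are two genuine gaps. First, in the sufficiency direction you simply assert that the Hilbert--Burch matrix of $I_Z$ ``must carry a row of three linearly independent linear forms.'' The existence of a syzygy of degree $d+1$ among the three degree-$d$ generators is the technical heart of the paper's Theorem \ref{teorema finale}: it is extracted from hypotheses (1)--(3) via the Gruson--Peskine numerical character, the Ellia--Peskine splitting result (Proposition \ref{ellia}) to prove connectedness, and the identification $\chi(Z)=\chi_{\max}$. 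A purely numerical computation of syzygy degrees from $\deg Z=d^2-d+1$ would only work if you already knew $I_Z$ is minimally generated by exactly three forms of degree $d$ with no higher-degree minimal generators --- which is itself what must be proved. The linear independence of $\ell_0,\ell_1,\ell_2$, which you defer, actually has a short direct proof from hypothesis (3) with $k=d-1$ (a dependence forces $(d-1)d$ points of $Z$ onto a curve of degree $d-1$), so deferring it was unnecessary, but deferring the existence of the syzygy leaves the proof without its main step.

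Second, the ``black box'' you invoke does not cover the reducible case of the necessity direction. The classical statement (Theorem \ref{Dolg}, i.e.\ Dolgachev's Theorem 7.3.13) describes fibers of \emph{irreducible} Laguerre nets, whereas eigenscheme nets are frequently reducible (e.g.\ for Fermat polynomials), and extending the fiber description to component-free but reducible nets is precisely Theorem \ref{thm: facile} of this paper, not a classical result. The case it must handle is exactly the one your integral-curve argument misses: a union of $k$ distinct lines, each carrying exactly $d$ eigenpoints with the $k$ sets pairwise disjoint, would give $kd$ points on a degree-$k$ curve without violating condition (2) or any single-component bound. Ruling this out requires knowing that all of $C$ is contracted to a \emph{single} point while every fiber of $\lambda_Z$ is contained in a line --- the argument via the divisor $W=V(x_0y_0+x_1y_1+x_2y_2)$ in $\p^2\times\p^2$ --- and this input is not supplied by anything you cite.
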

This is shown in Theorems \ref{thm: facile} and \ref{teorema finale} and generalizes \cite[Proposition 2.1]{OS1} and \cite[Theorem 5.1]{ASS}. One of the ingredients is the numerical character of a set of points in $\p^2$, introduced by Gruson and Peskine.

In this work it was useful to make experiments with the software Macaulay2, freely available at \texttt{www.math.uiuc.edu/Macaulay2}.

\section{Basic results on eigenschemes}\label{sec: basic}

Since the introduction of eigenschemes of tensors is relatively recent, many questions are still unanswered. For instance, given a configuration of eigenpoints, it is natural to ask if they are in special position. Under mild assumptions, we give a sharp bound on the number of eigenpoints which can lie on a linear space.

While the general tensor has a zero-dimensional eigenscheme, it is very interesting to see what happens when it has a positive-dimensional component. The goal to classify all possible subschemes of $\p^n$ arising as eigenschemes of tensors seems to be still open. However, we have some initial results in this direction. We first recall the definition of {\it regular eigenscheme}.

\begin{definition}\label{def: regular eigenscheme} 
Let $T=(g_0,\dots,g_n)\in(\Sym^{d-1}\C^{n+1})^{\oplus(n+1)}$ be a partially symmetric tensor. The \emph{irregular eigenscheme} of $T$ is the subscheme $\irr(T)\subseteq\p^n$ defined by the ideal $(g_0,\dots,g_n)\subseteq\C[x_0,\dots,x_n]$. If $T$ is symmetric, then we identify it with a homogeneous polynomial $f$. In this case $g_i=\de_if$ and the irregular eigenscheme is indeed the singular locus of $f$. The residue of $E(T)$ with respect to $\irr(T)$ is called the \emph{regular eigenscheme} and denoted by $\reg(T)$. We can compute the ideal of $\reg(T)$ as the saturation
	\[
	I_{\reg(T)}=I_{\overline{\reg(T)}}= I_{E(T)}:(I_{\irr(T)})^{\infty}.
	\]
\end{definition}

In the present section, we bound the degree of a $(n-1)$-dimensional component of the eigenscheme and we give a necessary condition for the regular eigenscheme of a polynomial to have positive dimension. What strikes us as remarkable is the role played by the isotropic quadric.
We are able to employ it in order to exhibit a family of plane curves with a positive-dimensional regular eigenscheme.

We start by proving a few easy properties that will be useful in the rest of the article. When $f\in\C[x_0,\dots,x_n]_3$, several features of $E(f)$ have been worked out in \cite{CGKS}. We generalize some of them for arbitrary $d$. The next lemma addresses the number of eigenpoints of the general tensor.

\begin{lemma}\label{lem:nonempty}
Let $d\ge 3$ and let $\T\in(\C^{n+1})^{\otimes d}$.
	\begin{enumerate}
		\item \label{bullet: count general eigenpoints} If $\dim E(\T)=0$, then $\deg E(\T)=\frac{(d-1)^{n+1}-1}{d-2}$. If $\T$ is general, then $E(\T)$ is reduced.
\item \label{bullet: at least an eigenpoint} $E(T)\neq \vu$. In particular, if $f$ is a smooth polynomial then $R(f)\neq\emptyset$.
\end{enumerate}
\end{lemma}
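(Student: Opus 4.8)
The plan is to realize $E(\T)$ as the zero scheme of a global section of a twist of the tangent bundle, so that both the degree and the (non)emptiness become Chern class statements. Writing $\T=(g_0,\dots,g_n)$ with $\deg g_i=d-1$, the tuple $(g_0,\dots,g_n)$ is a section of $\oo_{\p^n}(d-1)^{\oplus(n+1)}$. Twisting the Euler sequence $0\to\oo_{\p^n}\to\oo_{\p^n}(1)^{\oplus(n+1)}\to\mathcal{T}_{\p^n}\to0$ by $\oo_{\p^n}(d-2)$ yields a surjection $\oo_{\p^n}(d-1)^{\oplus(n+1)}\twoheadrightarrow\mathcal{T}_{\p^n}(d-2)$, and I would let $s\in H^0(\p^n,\mathcal{T}_{\p^n}(d-2))$ be the image of $(g_0,\dots,g_n)$. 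By construction $s$ vanishes exactly where $(g_0,\dots,g_n)$ is proportional to $(x_0,\dots,x_n)$. The first step is to check that this equality of zero loci is scheme-theoretic: on the chart $x_0\neq0$ the components of $s$ are the forms $x_0g_i-x_ig_0$, and the identity $x_0(x_ig_j-x_jg_i)=x_i(x_0g_j-x_jg_0)-x_j(x_0g_i-x_ig_0)$ shows that these generate the same ideal as all the $2\times2$ minors of Definition \ref{def:eigenscheme}; hence $Z(s)=E(\T)$ as subschemes of $\p^n$.

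For part (1), when $\dim E(\T)=0$ the section $s$ vanishes in the expected codimension $n=\rank\mathcal{T}_{\p^n}(d-2)$, so the length of $Z(s)$ equals the top Chern number $\int_{\p^n}c_n(\mathcal{T}_{\p^n}(d-2))$. Using $c(\mathcal{T}_{\p^n})=(1+h)^{n+1}$, so that $c_i(\mathcal{T}_{\p^n})=\binom{n+1}{i}h^i$, together with the twisting formula $c_n(\mathcal{E}\otimes L)=\sum_{i=0}^n c_i(\mathcal{E})\,c_1(L)^{n-i}$, I get $c_n(\mathcal{T}_{\p^n}(d-2))=\big(\sum_{i=0}^n\binom{n+1}{i}(d-2)^{n-i}\big)h^n$; the elementary identity $\sum_{i=0}^n\binom{n+1}{i}a^{n-i}=\frac{(1+a)^{n+1}-1}{a}$ applied with $a=d-2$ then gives the degree $\frac{(d-1)^{n+1}-1}{d-2}$. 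For reducedness I would first argue that every section of $\mathcal{T}_{\p^n}(d-2)$ comes from a tensor: the long exact sequence of the twisted Euler sequence together with $H^1(\p^n,\oo_{\p^n}(d-2))=0$ shows that $H^0(\oo_{\p^n}(d-1)^{\oplus(n+1)})\to H^0(\mathcal{T}_{\p^n}(d-2))$ is surjective. Since $\mathcal{T}_{\p^n}(d-2)$ is globally generated for $d\ge2$, a Bertini-type theorem for globally generated vector bundles guarantees that a general section has smooth zero locus; being zero-dimensional, it is then reduced. As general tuples map to general sections, $E(\T)$ is reduced for general $\T$.

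For part (2), I would argue by contradiction using the same Chern class. If $E(\T)=\varnothing$ then $s$ is nowhere vanishing, so it fits into $0\to\oo_{\p^n}\to\mathcal{T}_{\p^n}(d-2)\to\mathcal{Q}\to0$ with $\mathcal{Q}$ locally free of rank $n-1$; then $c_n(\mathcal{T}_{\p^n}(d-2))=c_n(\mathcal{Q})=0$, contradicting the value $\frac{(d-1)^{n+1}-1}{d-2}$ computed above, which is strictly positive precisely because $d\ge3$. Hence $E(\T)\neq\varnothing$ for every tensor. Finally, if $f$ is smooth then $\irr(f)=\varnothing$, so $(\de_0f,\dots,\de_nf)$ has no common zero in $\p^n$ and the saturation defining $\reg(f)$ removes nothing; thus $\reg(f)=E(f)\neq\varnothing$.

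The step I expect to cost the most care is the scheme-theoretic identification $Z(s)=E(\T)$ and, relatedly, ensuring that the Chern number computes the actual length of $E(\T)$ rather than a virtual one: this is exactly where the zero-dimensionality hypothesis (expected codimension) enters in part (1), while the nowhere-vanishing argument is what lets the positivity of the same Chern number force $E(\T)\neq\varnothing$ for every, not merely general, tensor in part (2).
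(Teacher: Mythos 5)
Your proof is correct, and it is essentially the argument the paper delegates to its references: the paper proves this lemma only by citing \cite{CartSturm} for part (1) and \cite{CGKS} for part (2), and both of those rest on realizing $E(\T)$ as the zero scheme of a section of $\mathcal{T}_{\p^n}(d-2)$ and computing $c_n$, exactly as you do (the paper itself invokes this identification later via \cite[Lemma 5.6]{Abo}). Your write-up correctly isolates the two points that need care — the scheme-theoretic equality $Z(s)=E(\T)$ on affine charts, and the distinction between the hypothesis $\dim E(\T)=0$ needed for the length count versus the unconditional nowhere-vanishing argument for nonemptiness — so it stands as a valid self-contained replacement for the citations.
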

\begin{proof}
Part \eqref{bullet: count general eigenpoints} was already solved in \cite[Theorem 2.1]{CartSturm}. For part \eqref{bullet: at least an eigenpoint}, we observe that the proof presented in \cite[Lemma 2.2(2)]{CGKS} for cubics applies verbatim to any $d\ge 3$ and also to partially symmetric tensors. The extension to general tensors comes from Remark \ref{non è restrittivo considerare i parzialmente simmetrici}.
\end{proof}

For cubic polynomials, \cite[Proposition 3.3]{CGKS} gives an inequality between the dimension of the singular locus and the dimension of the regular eigenscheme. Once more, it is immediate to see that the proof can be extended not only to every $d\ge 3$, but also to not necessarily symmetric tensors.

\begin{proposition} \label{pro: dimE and dimS} 
Let $d\ge 3$. If $T=(g_0,\ldots,g_n)\in(\Sym^{d-1}\C^{n+1})^{\oplus(n+1)}$, then $\dim \irr(T) + 1 \ge \dim \reg(T)$. In particular, $\dim\reg(f)=0$ whenever $f$ is a smooth polynomial.
\end{proposition}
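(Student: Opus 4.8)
The plan is to reduce the inequality to a statement about a single top-dimensional component of $\reg(T)$ and then to exhibit, inside $\irr(T)$, a divisor on that component. Set $X=\irr(T)=V(g_0,\dots,g_n)$ and let $Z$ be an irreducible component of $\reg(T)$ of maximal dimension $r:=\dim\reg(T)$, taken with its reduced structure. If $r\le 0$ there is nothing to prove, since $\dim X\ge -1$ always; so I assume $r\ge 1$. Because $\reg(T)=\overline{E(T)\setminus X}$, the generic point of $Z$ lies in $E(T)\setminus X$, so on a dense open subset of $Z$ the two rows of the defining matrix are proportional with the bottom row nonzero: there is an a priori only rationally defined function $\lambda$ with $g_i=\lambda\,x_i$ for every $i$. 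Note that this setup is insensitive to symmetry, which is exactly why the cubic argument of \cite{CGKS} will extend to partially symmetric tensors.

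The key step is to upgrade $\lambda$ to a genuine nonzero global section of an ample line bundle on $Z$. On the chart $\{x_j\ne 0\}$ one sets $\lambda=g_j/x_j$; the eigenscheme equations $x_ig_j=x_jg_i$ guarantee that these local expressions agree on overlaps, so they glue to a section $\lambda\in H^0(Z,\oo_Z(d-2))$. Since the generic point of $Z$ is a genuine eigenpoint (the bottom row does not vanish there), $\lambda$ is not identically zero. Here the hypothesis $d\ge 3$ enters decisively: the line bundle $\oo_Z(d-2)=\oo_{\p^n}(d-2)|_Z$ is then ample, hence nontrivial on the positive-dimensional variety $Z$, so a nowhere-vanishing section is impossible. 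Consequently the zero scheme $D=\{\lambda=0\}$ is a nonempty effective Cartier divisor on $Z$, of pure codimension one, so $\dim D=r-1$.

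Finally I would identify $D$ with a subscheme of $X$: at any point of $Z$ where $\lambda=0$ we have $g_i=\lambda x_i=0$ for every $i$, so $D\subseteq V(g_0,\dots,g_n)=\irr(T)$. Combining, $\dim\irr(T)\ge\dim D=r-1$, which is precisely $\dim\irr(T)+1\ge\dim\reg(T)$. For the last sentence, a smooth $f$ has $\irr(f)=\Sing(f)=\vu$, so the inequality forces $\dim\reg(f)\le 0$; Lemma \ref{lem:nonempty}\eqref{bullet: at least an eigenpoint} ensures $\reg(f)\ne\vu$, whence $\dim\reg(f)=0$.

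I expect the main obstacle to be the bookkeeping that turns $\lambda$ into a bona fide section of $\oo_Z(d-2)$ rather than merely a rational function: checking that the local expressions $g_j/x_j$ glue, and that $\lambda\not\equiv 0$ on each component of $\reg(T)$ (which relies on the saturation in Definition \ref{def: regular eigenscheme} placing the generic point of $Z$ outside $\irr(T)$). The second delicate point is confirming that the zero locus of $\lambda$ is genuinely codimension one, which is exactly where ampleness, and hence the assumption $d\ge 3$, is used; for $d=2$ the section $\lambda$ would have degree $0$ and the argument would collapse, consistent with the failure of the bound for matrices. The remaining steps are formal.
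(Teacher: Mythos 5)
Your proof is correct, and it reaches the inequality by a genuinely different (more intrinsic) route than the paper. The paper works extrinsically: it homogenizes the eigenvalue by adjoining an auxiliary coordinate $\lambda$ and considering the incidence variety $\{g_i=\lambda^{d-2}x_i\}\subseteq\p^{n+1}$, so that $E(T)$ and $\irr(T)$ are images under the projection from $(0:\ldots:0:1)$ of this variety and of its section by the hyperplane $\lambda=0$; the inequality then comes from the fact that a hyperplane section of a positive-dimensional projective variety is nonempty of codimension at most one. You instead stay on a top-dimensional component $Z$ of $\reg(T)$, glue the local eigenvalues $g_j/x_j$ into a global section of $\oo_Z(d-2)$ via the minor equations $x_ig_j=x_jg_i$, and use ampleness to force a nonempty zero divisor that you then locate inside $\irr(T)$. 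These are the same underlying idea in different clothing --- your divisor $V(\lambda)$ is essentially the image of the paper's hyperplane section restricted to the relevant component --- but your packaging has two advantages: it makes the role of the hypothesis $d\ge 3$ completely transparent (for $d=2$ the section has degree $0$ and need not vanish, consistent with the failure of the bound for matrices), and it sidesteps the fiber-dimension bookkeeping for the projection, which is the least clean part of the paper's write-up. The delicate points you flag all check out: the generic point of $Z$ lies off $\irr(T)$ because the saturation discards exactly the components contained in $\irr(T)$; the local expressions $g_j/x_j$ agree on overlaps because the minors vanish on $Z$; nonvanishing of $\lambda$ at the generic point follows since $\lambda\equiv 0$ on some $U_j$ would force all $g_i$ to vanish there; and on the integral scheme $Z$ the zero locus of a nonzero section of a line bundle has pure codimension one by Krull.
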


\begin{proof}
By definition,  eigenvectors $v$ of $T$ are solutions of the equations
\begin{equation}\label{eq: naif}
    g_i(v)=\lambda v_i
\end{equation}
for every $i\in\{0,\dots,n\}$ and for some $\lambda\in\C$. Solutions with $\lambda=0$ are irregular eigenpoints, while solutions with $\lambda\neq 0$ are regular. In order to compute the equations of $E(T)$, one approach is to consider the ideal generated by \eqref{eq: naif} in the larger ring $\C[x_0,\dots,x_n,\lambda]$ and to eliminate $\lambda$. However, equations \eqref{eq: naif} are not homogeneous. In order to work with homogeneous forms, we raise $\lambda$ to the power $d-2$. This does not change the eigenvector. Consider then a projective space of dimension $n+1$, with coordinates $x_0,\dots,x_n,\lambda$. Let $H$ be the hyperplane of this $\p^{n+1}$  defined by $\lambda=0$ and let $\pi:\p^{n+1}\dashrightarrow \p^n$ be the projection from $(0:\ldots:0:1)$. Then $$E(T)=\pi(\{(x_0:\ldots:x_n:\lambda)\in\p^{n+1}\mid g_i(x_0:\ldots:x_n)=\lambda^{d-2} x_i\ \forall i
\})$$
and
$\Sing(T)=\pi(H\cap\{(x_0:\ldots:x_n:\lambda)\in\p^{n+1}\mid g_i(x_0:\ldots:x_n)=\lambda^{d-2} x_i\ \forall i
\})$.
The projection $\pi$ corresponds to the operation of eliminating $\lambda$.  Since all fibers of $\pi$ have dimension 1, we have
\begin{align*}
\dim \irr(T) &= \dim(\pi^{-1} \irr(T))-1 = \dim \left( \pi^{-1}E(T)\cap H \right) -1.\end{align*}
Now we observe that $H$ is not contained in $\pi^{-1}E(T)$, so intersecting $\pi^{-1}E(T)$ with $H$ decreases the dimension of at least 1, therefore
\begin{align*}
\dim \left( \pi^{-1}E(T)\cap H \right) \ge\dim(\pi^{-1}E(T))-2 = \dim E(T)-1\ge\dim \reg(T)-1.
\end{align*}If $f$ is a smooth polynomial, then this implies $\dim\reg(f)\le 0$. We already know that $\dim\reg(f)\neq-1$ by Lemma \ref{lem:nonempty}(\ref{bullet: at least an eigenpoint}).
\end{proof}

It is natural to wonder how basic operations between polynomials reflect on their eigenscheme. The next lemma gathers some elementary results in this direction.
\begin{lemma}
\label{lem: first properties} Let $T=(g_0,\dots,g_n)\in(\Sym^{d-1}\C^{n+1})^{\oplus(n+1)}$.
\begin{enumerate}
\item If $S\in(\Sym^{d-1}\C^{n+1})^{\oplus(n+1)}$, then $E(T+S)\supseteq E(T)\cap E(S)$\label{bullet: sum of two forms}.
\item \label{bullet: parzialmente simmetrici con lo stesso autoschema}Let $h\in\C[x_0,\dots,x_n]_{d-2}$ and $\lambda\in\C^*$. If $T^\prime=(\lambda g_0+x_0h,\dots,\lambda g_n+x_nh)$, then $E(T)=E(T^\prime)$.
\item \label{item: sommare la quadrica isotropa non cambia l'autoschema}Assume that $d=2k$ is even and that $T$ is symmetric, that is, $T$ corresponds to a form $f\in\C[x_0,\dots,x_n]_{2k}$. Let $q=x_0^2+\ldots+x_n^2$. Then $E(f)=E(\lambda f+\mu q^k)$ for every $\lambda\in\C^*$ and every $\mu\in\C$.
\end{enumerate}
\end{lemma}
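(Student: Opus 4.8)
The unifying idea is that all three statements reduce to a direct computation with the $2\times 2$ minors $m_{ij}:=x_i g_j - x_j g_i$ generating $I_{E(T)}$, exploiting the fact that these minors are bilinear in the two rows of the defining matrix. So the plan is to write down the generators of the eigenscheme ideal in each case and compare them with those of $I_{E(T)}$.

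For part \eqref{bullet: sum of two forms}, write $S=(s_0,\dots,s_n)$ and observe that the generators of $I_{E(T+S)}$ are $x_i(g_j+s_j)-x_j(g_i+s_i)=(x_ig_j-x_jg_i)+(x_is_j-x_js_i)$, that is, each is the sum of a generator of $I_{E(T)}$ and a generator of $I_{E(S)}$. Hence $I_{E(T+S)}\subseteq I_{E(T)}+I_{E(S)}$, which is exactly the scheme-theoretic inclusion $E(T+S)\supseteq E(T)\cap E(S)$, the right-hand side being cut out by the ideal sum.

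For part \eqref{bullet: parzialmente simmetrici con lo stesso autoschema}, I would substitute $g_i'=\lambda g_i + x_i h$ into the minors and compute $x_ig_j'-x_jg_i'=\lambda(x_ig_j-x_jg_i)+(x_ix_j-x_jx_i)h=\lambda(x_ig_j-x_jg_i)$, where the $h$-terms cancel because $x_ix_j$ is symmetric in $i,j$. Since $\lambda\in\C^*$, the two ideals have the same generators up to a nonzero scalar, so $I_{E(T')}=I_{E(T)}$ and the eigenschemes coincide. Part \eqref{item: sommare la quadrica isotropa non cambia l'autoschema} is then the symmetric instance of this: with $d=2k$ and $g_i=\de_i f$, one has $\de_i(q^k)=2k\,q^{k-1}x_i$, so the gradient of $\lambda f+\mu q^k$ is $(\lambda\,\de_i f + 2k\mu\,q^{k-1}x_i)_i$. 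This is precisely the tensor $T'$ of part \eqref{bullet: parzialmente simmetrici con lo stesso autoschema} with $h=2k\mu\,q^{k-1}$, a form of degree $2(k-1)=d-2$, and the conclusion follows immediately.

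Since each step is an elementary polynomial identity, there is no serious obstacle here; the only points requiring a little care are to read the intersection in \eqref{bullet: sum of two forms} scheme-theoretically (as the sum of ideals rather than as a set-theoretic intersection) and to verify in \eqref{item: sommare la quadrica isotropa non cambia l'autoschema} that $h=2k\mu\,q^{k-1}$ indeed has the degree $d-2$ demanded by \eqref{bullet: parzialmente simmetrici con lo stesso autoschema}.
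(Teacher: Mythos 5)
Your proof is correct and follows exactly the route the paper intends: parts (1) and (2) are the same direct computation with the minors $x_ig_j-x_jg_i$ that the paper leaves implicit, and part (3) is deduced as the special case $h=2k\mu\,q^{k-1}$ of part (2), just as in the paper. You have merely written out the computations the authors summarize as "direct."
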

\begin{proof} Part \eqref{bullet: sum of two forms} and \eqref{bullet: parzialmente simmetrici con lo stesso autoschema} follow from a direct computation. Part \eqref{item: sommare la quadrica isotropa non cambia l'autoschema} is a special case of part \eqref{bullet: parzialmente simmetrici con lo stesso autoschema}, where we take $h=2\mu kq^{k-1}$.
\end{proof}

We turn our attention to configurations of eigenpoints. Possible configurations of eigenpoints of cubic surfaces are addressed in \cite[Section 5]{CGKS}. The authors prove that they are parametrized by an open set of a linear subspace of the Grassmanian $\gr(3,\p^{14})$. Now we want to take a more basic approach and see how many eigenpoints can be in a linear special position. Since all linear spaces of a given dimension are equivalent up to orthogonal transformation, we can give them coordinates thanks to  \cite[Theorem 2.20]{QZ}.
We start with a technical result.

\begin{lemma}	\label{lem: restrizione ad un sottospazio lineare}
	Let $f\in\C[x_0,\dots,x_n]$ and let $L\subseteq\p^n$ be a linear subspace of equations $x_{k+1}=\ldots=x_n=0$. Denote by $f_{|L}\in\C[x_0,\dots,x_k]$ the restriction of $f$ to $L$, defined by
	\[
f_{|L}(x_0,\dots,x_k)=f(x_0,\dots,x_k,0,\dots,0)
	\]
and let $E(f_{|L})\subseteq L$ be the eigenscheme of this restriction. Then $L\cap E(f)\subseteq E(f_{|L})$.
\begin{proof} 
By Definition \ref{def:eigenscheme}, \begin{align*}
&I_{L\cap E(f)}=(x_i\de_jf-x_j\de_if	\mid 0\le
i<j\le n)+(x_{k+1},\dots,x_n)
\subseteq\C[x_0,\dots,x_n].
	\end{align*}
		On the other hand, $I_{E(f_{|L})}=(x_i\de_jf-x_j\de_if\mid 0\le i<j\le k)\subseteq\C[x_0,\dots,x_k]$, so when we regard it under the embedding of $L$ in $\p^n$, it becomes
		\[I_{E(f_{|L})}=(x_i\de_jf-x_j\de_if\mid 0\le i<j\le k)+(x_{k+1},\dots,x_n)\subseteq\C[x_0,\dots,x_n].\]
Therefore $I_{E(f_{|L})}\subseteq I_{L\cap E(f)}$ and this means that $E(f_{|L})\supseteq L\cap E(f)$.
\end{proof}
\end{lemma}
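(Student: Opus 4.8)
The plan is to compare the two defining ideals directly, using Definition \ref{def:eigenscheme} on both sides, and to read off the desired containment of schemes from a reverse containment of ideals. Working in $\C[x_0,\dots,x_n]$, the scheme-theoretic intersection $L\cap E(f)$ is cut out by
\[
I_{L\cap E(f)}=(x_i\de_jf-x_j\de_if\mid 0\le i<j\le n)+(x_{k+1},\dots,x_n),
\]
since $I_L=(x_{k+1},\dots,x_n)$ and $E(f)$ is the vanishing locus of the $2\times 2$ minors. On the other side, $E(f_{|L})\subseteq L$ is defined by the analogous minors formed from $f_{|L}$, involving only the variables $x_0,\dots,x_k$; viewed through the inclusion $L\hookrightarrow\p^n$, its ideal is
\[
I_{E(f_{|L})}=(x_i\de_jf_{|L}-x_j\de_if_{|L}\mid 0\le i<j\le k)+(x_{k+1},\dots,x_n).
\]
The inclusion of schemes $L\cap E(f)\subseteq E(f_{|L})$ is equivalent to the reverse inclusion of ideals $I_{E(f_{|L})}\subseteq I_{L\cap E(f)}$, so it suffices to show that every generator of the left-hand ideal lies in the right-hand one.

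The one non-formal step I would isolate is the observation that partial differentiation commutes with restriction to $L$ for the surviving variables: for every $i\in\{0,\dots,k\}$ one has $\de_if_{|L}=(\de_if)_{|L}$, because setting $x_{k+1}=\dots=x_n=0$ does not involve $x_i$. Consequently, modulo $(x_{k+1},\dots,x_n)$ the generator $x_i\de_jf_{|L}-x_j\de_if_{|L}$ coincides with $x_i\de_jf-x_j\de_if$. Since the index pairs $0\le i<j\le k$ form a subset of the pairs $0\le i<j\le n$, each generator of $I_{E(f_{|L})}$ is, up to an element of $(x_{k+1},\dots,x_n)$, one of the minors generating $I_{E(f)}$. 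This yields $I_{E(f_{|L})}\subseteq I_{L\cap E(f)}$ and hence the claimed containment.

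I do not expect a genuine obstacle, as the statement is essentially formal once the identity $\de_if_{|L}=(\de_if)_{|L}$ is recorded. The only point requiring care is that the inclusion holds in one direction only: the minors of $E(f)$ indexed by pairs $(i,j)$ with $j>k$ involve the derivatives $\de_{k+1}f,\dots,\de_nf$, which are invisible to $f_{|L}$ and impose extra conditions on $L\cap E(f)$ that are absent from $E(f_{|L})$. Thus one should resist claiming equality, and the argument naturally produces precisely the inclusion $L\cap E(f)\subseteq E(f_{|L})$.
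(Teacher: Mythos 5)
Your proposal is correct and follows essentially the same route as the paper: write down both ideals inside $\C[x_0,\dots,x_n]$, observe that the generators of $I_{E(f_{|L})}$ indexed by $0\le i<j\le k$ agree (modulo $(x_{k+1},\dots,x_n)$) with a subset of the minors generating $I_{E(f)}$, and conclude the reverse containment of ideals. The only difference is that you make explicit the identity $\de_if_{|L}=(\de_if)_{|L}$ for $i\le k$, which the paper uses tacitly; this is a welcome clarification rather than a deviation.
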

Observe that the analogous statement of Lemma \ref{lem: restrizione ad un sottospazio lineare} does not hold for the regular eigenscheme. Consider for instance the polynomial $q=x_0^2+\ldots+x_n^2$ defining the isotropic quadric $Q$. Then $E(q)=R(q)=\p^n$. Let $p\in Q$ and let $L=T_pQ$. Then $p\in R(f)$ but $p\in\Sing(f_{|L})$, hence $p\in R(f)\setminus R(f_{|L})$. Therefore in general $L\cap R(f)$ does not need to be a subscheme of $R(f_{|L})$.

\begin{proposition}\label{prop: configurations}
Let $T\in(\C^{n+1})^{\otimes d}$.
\begin{enumerate}
    \item If $\dim E(T)=0$, then $E(T)$ contains no $d+1$ collinear points.
\item Assume that $T$ is symmetric and let $f\in\C[x_0,\dots,x_n]_d$ be the corresponding homogeneus polynomial. Let $L\subseteq\p^n$ be any $k$-dimensional linear space. If $\dim(E(f_{|L}))=0$, then
$$\deg (L\cap R(f))\le\deg (L\cap E(f))\le \frac{(d-1)^{k+1}-1}{d-2}.$$
\end{enumerate}
\end{proposition}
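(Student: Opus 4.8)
The plan is to reduce both statements to a restriction-to-a-linear-space computation and then feed the result into the two lemmas already proved, namely Lemma~\ref{lem:nonempty} and Lemma~\ref{lem: restrizione ad un sottospazio lineare}.

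For part (1) I first reduce to a partially symmetric tensor $T=(g_0,\dots,g_n)$, which does not alter the eigenscheme, and I exploit the orthogonal invariance of the eigenscheme from \cite[Theorem 2.20]{QZ} to assume that the line $L$ through the alleged $d+1$ collinear eigenpoints is the coordinate line $L=\{x_2=\dots=x_n=0\}$. Setting $\bar g_i=g_i(x_0,x_1,0,\dots,0)$, the $2\times 2$ minors of the matrix in Definition~\ref{def:eigenscheme} restrict, on $L$, to the forms $x_0\bar g_1-x_1\bar g_0$ together with $x_0\bar g_j$ and $x_1\bar g_j$ for $j\ge 2$, while the minors indexed by two columns $\ge 2$ vanish identically; each surviving form has degree $d$ in $(x_0,x_1)$. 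Hence $L\cap E(T)$ is cut out inside $L\cong\p^1$ by forms of degree $d$.

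Now I argue by a dichotomy. If at least one of these restricted forms is not identically zero, then $L\cap E(T)$ sits inside a divisor of degree $d$ on $\p^1$ and so consists of at most $d$ points, contradicting the presence of $d+1$ collinear eigenpoints. If instead all of them vanish identically, then every point of $L$ satisfies the minor equations, so $L\subseteq E(T)$ and $\dim E(T)\ge 1$, against $\dim E(T)=0$. The step I expect to require the most care is precisely this degenerate branch: one has to notice that the vanishing of $x_0\bar g_1-x_1\bar g_0$ alone does not yet create an infinite eigenscheme, and that the genuine obstruction appears only once the forms $x_0\bar g_j$ and $x_1\bar g_j$ vanish as well, forcing the whole line into $E(T)$.

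For part (2) the first inequality is formal: by Definition~\ref{def: regular eigenscheme} the ideal of $R(f)$ is a saturation of $I_{E(f)}$, hence contains it, so $R(f)\subseteq E(f)$ as subschemes of $\p^n$ and therefore $L\cap R(f)\subseteq L\cap E(f)$. For the second inequality I invoke Lemma~\ref{lem: restrizione ad un sottospazio lineare}, which gives the scheme-theoretic inclusion $L\cap E(f)\subseteq E(f_{|L})$. The restriction $f_{|L}\in\C[x_0,\dots,x_k]_d$ is a symmetric tensor in $(\C^{k+1})^{\otimes d}$ with zero-dimensional eigenscheme by hypothesis, so Lemma~\ref{lem:nonempty}(\ref{bullet: count general eigenpoints}) gives $\deg E(f_{|L})=\frac{(d-1)^{k+1}-1}{d-2}$. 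In particular all three schemes are zero-dimensional, the degree comparisons make sense, and chaining the inclusions yields $\deg(L\cap R(f))\le\deg(L\cap E(f))\le\deg E(f_{|L})=\frac{(d-1)^{k+1}-1}{d-2}$. This part presents no real obstacle once the two lemmas are available; the only point to check is that zero-dimensionality propagates along the inclusions, which is immediate.
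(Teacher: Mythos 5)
Your proof is correct and takes essentially the same route as the paper's: part (1) is the paper's B\'ezout argument (each degree-$d$ generator of $I_{E(T)}$ either vanishes identically on the line, forcing $L\subseteq E(T)$, or cuts at most $d$ points on it), merely written out in explicit coordinates, and part (2) chains Lemma~\ref{lem: restrizione ad un sottospazio lineare} with Lemma~\ref{lem:nonempty}\eqref{bullet: count general eigenpoints} exactly as the paper does. The one blemish is your preliminary normalization of $L$ to the coordinate line via \cite[Theorem 2.20]{QZ}: the complex orthogonal group does not act transitively on lines of $\p^n$ (a line tangent to, or contained in, the isotropic quadric is not in the orbit of $\{x_2=\dots=x_n=0\}$), but this step is dispensable, since your dichotomy applies verbatim to any parametrized line.
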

\begin{proof}\begin{enumerate}
    \item Thanks to Remark \ref{non è restrittivo considerare i parzialmente simmetrici}, we can assume that $T$ is partially symmetric, hence $I_{E(T)}$ is defined by $\binom{n+1}{2}$ polynomials of degree $d$. This means that $E(T)$ is the base locus of a linear system of degree $d$ divisors of $\p^n$. If $d+1$ of the base points are collinear, then the whole line is contained in the base locus by B\'{e}zout's theorem, and therefore the base locus of $I_{E(T)}(d)$ has positive dimension.
\item By Lemma \ref{lem:nonempty}, both the eigenscheme and the regular eigenscheme of the restriction $f_{|L}\in\C[x_0,\dots,x_k]_d$ have degree at most $\frac{(d-1)^{k+1}-1}{d-2}$. The second inequality of our statement comes from Lemma \ref{lem: restrizione ad un sottospazio lineare}, while the first inequality follows from Definition \ref{def: regular eigenscheme}.
\end{enumerate}
\end{proof}

In particular, the general polynomial has at most $d$ collinear eigenpoints, $d^2-d+1$ coplanar eigenpoints, and so on.
Notice that the assumption that $\dim E(f_{|L})=0$ is necessary for our proof. Indeed, there are forms with a zero-dimensional eigenscheme whose restriction to a linear space has a positive-dimensional eigenscheme, as witnessed by the following example.

\begin{example}\label{example: restriction can change the dimension of the eigenscheme}Let $g=x_0^2x_1+x_1^2x_2+x_1x_3^2$ and let $L=V(x_4)\subseteq\p^4$. With the command \texttt{random} in Macaulay2, we take  a general element $h\in\C[x_0,\dots,x_4]_2$ and we set $f=g+x_4h$. Then $\dim E(f)=\dim R(f)=0$, while $\dim E(f_{|L})=\dim R(f_{|L})>0$.
\end{example}

Now we want to shed some light on eigenschemes of positive dimension. The locus of such tensors has not been studied yet. Our first result generalizes \cite[Lemma 38]{Maccioni} and will also play a role in Section \ref{sec: ternary forms}. For the special case $n=2$, observe that our statement already appears in proof of \cite[Thorem 5.5]{ASS}.

\begin{lemma}\label{lem: due G_i nulli}
Let $T= (g_0, \ldots, g_n)\in(\Sym^{d-1}\C^{n+1})^{\oplus(n+1)}$ and let
\[
\Lambda=\p\left(\langle x_ig_j-x_jg_i\mid 0\le i<j\le n
\rangle\right)\subseteq\p\left(\C[x_0,\dots,x_n]_d\right)
\]
be the linear subspace spanned by the generators of $I_{E(T)}$. Then $\dim \Lambda\le 0$ if and only if there exists $h\in\C[x_0,\dots,x_n]_{d-2}$ such that $g_i=x_ih$ for every $i\in\{0,\ldots,n\}$. In this case $\Lambda=\vu$.

When $T$ is symmetric, we can identify $T$ with a polynomial $f\in\C[x_0,\dots,x_n]_d$. In this case, $\dim \Lambda\le 0$ if and only if there exist $\lambda\in\C$ and $k\in\N$ such that\\ $f=\lambda(x_0^2 +\ldots+x_n^2)^k$. In particular, $d=2k$ is even.
\end{lemma}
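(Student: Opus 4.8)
The plan is to treat the two halves of the statement in turn, reducing the symmetric case to the partially symmetric one via Euler's identity. Write $m_{ij}:=x_ig_j-x_jg_i$ for the generators of $I_{E(T)}$, so that $\Lambda=\p(\langle m_{ij}\mid 0\le i<j\le n\rangle)$. The direction ``$g_i=x_ih\Rightarrow\dim\Lambda\le 0$'' is immediate: then every $m_{ij}=x_ix_jh-x_jx_ih=0$, the spanning space is $0$, and $\Lambda=\vu$. The content is the converse, and the key point I would isolate first is that $\dim\Lambda\le 0$ cannot produce an honest point: it forces every $m_{ij}$ to vanish, i.e. $\Lambda=\vu$. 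Throughout I assume $n\ge 2$, which is the relevant range; for $n=1$ there is a single minor and the statement degenerates.

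To rule out $\dim\Lambda=0$ I would use the Plücker-type syzygies among the $2\times 2$ minors: for all $i<j<k$ one has $x_km_{ij}-x_jm_{ik}+x_im_{jk}=0$, which is a direct expansion. If $\dim\Lambda=0$, then there is a nonzero $F\in\C[x_0,\dots,x_n]_d$ and scalars $c_{ij}$, not all zero, with $m_{ij}=c_{ij}F$. Substituting into the syzygy and using that $\C[x_0,\dots,x_n]$ is a domain gives $F\cdot(c_{ij}x_k-c_{ik}x_j+c_{jk}x_i)=0$, hence $c_{ij}x_k-c_{ik}x_j+c_{jk}x_i=0$ for every triple. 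Since $n\ge 2$, every pair lies in some triple and the $x_\ell$ are linearly independent, so all $c_{ij}=0$, a contradiction. Therefore $\Lambda=\vu$ and every $m_{ij}=0$, that is $x_ig_j=x_jg_i$. Unique factorization then finishes it: from $x_ig_j=x_jg_i$ and $\gcd(x_i,x_j)=1$ one gets $x_i\mid g_i$ for every $i$; writing $g_i=x_ih_i$ and substituting back shows all the cofactors $h_i$ coincide, producing a single $h\in\C[x_0,\dots,x_n]_{d-2}$ with $g_i=x_ih$ for all $i$.

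For the symmetric case I would set $g_i=\partial_if$ and apply the first half: $\dim\Lambda\le 0$ is equivalent to $\partial_if=x_ih$ for all $i$. The implication $f=\lambda q^k\Rightarrow\partial_if=x_i(2\lambda k q^{k-1})$ with $q=\sum_\ell x_\ell^2$ is immediate, so I focus on the converse. Euler's identity gives $df=\sum_i x_i\partial_if=h\sum_i x_i^2=hq$, whence $f=\tfrac1d\,qh$; differentiating this and comparing with $\partial_if=x_ih$ yields $q\,\partial_ih=(d-2)x_ih$ for all $i$. I would then prove, by induction on degree, that any homogeneous $p$ satisfying $q\,\partial_ip=(\deg p)\,x_ip$ for all $i$ is a scalar multiple of a power of $q$: since $q$ is irreducible (here $n\ge 2$ means at least three variables) and coprime to each $x_i$, the relation forces $q\mid p$, and writing $p=qp_1$ reduces the degree by two while preserving the same relation for $p_1$. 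Applied to $h$ this gives $h=\mu q^{(d-2)/2}$, so $d-2$ must be even; hence $d=2k$ and $f=\tfrac1d qh=\lambda q^k$.

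The main obstacle is the crux identified above: showing that $\dim\Lambda\le 0$ cannot give a single point but must collapse to $\vu$. The Plücker syzygies are exactly what make this work, and they are also where the hypothesis $n\ge 2$ enters. The only other delicate points are the uses of unique factorization and of the irreducibility of $q$, both routine once $n\ge 2$; the inductive descent by dividing out $q$ is then straightforward bookkeeping.
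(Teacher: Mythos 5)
Your proof is correct, and it takes a genuinely different route from the paper's in both halves of the argument. For the partially symmetric part, the paper assumes every minor is a scalar multiple of $x_0g_1-x_1g_0$ and runs an explicit chain of divisibility substitutions to extract the common factor $h$; you instead first kill the case $\dim\Lambda=0$ outright via the Koszul syzygies $x_km_{ij}-x_jm_{ik}+x_im_{jk}=0$, which reduces everything to $x_ig_j=x_jg_i$ for all $i<j$, where a one-line UFD argument finishes. This is cleaner and makes transparent from the start why $\dim\Lambda=0$ never occurs, a fact the paper only records as an afterthought once $g_i=x_ih$ has been derived. For the symmetric part, the paper applies Schwarz's theorem to show that $h$ itself satisfies the hypotheses of the lemma in degree $d-2$ and then invokes induction on the lemma; you instead derive $q\,\partial_ih=(d-2)x_ih$ from Euler's identity and run a descent dividing out the irreducible conic $q$ --- this is precisely the mechanism the paper uses later in the proof of Lemma \ref{lem: dim of H} (equation \eqref{eq: Bernd}), so your argument in effect unifies the two proofs. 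Both routes require $n\ge 2$ (so that triples of indices exist for the syzygies and so that $q$ is irreducible); the paper leaves this hypothesis implicit, and you correctly flag that the statement degenerates for $n=1$. The only cosmetic caveat is that when $d$ is odd your descent yields $h=0$ and hence $f=0$, which vacuously satisfies the conclusion with $\lambda=0$; this edge case is present in the paper's formulation as well and is not a defect of your argument.
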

\begin{proof} One implication is easy. Indeed, if $g_i=x_ih$ then it is straightforward to verify that $x_ig_j-x_jg_i=0$. Let us consider the other direction. By hypothesis, for every $0\le i<j\le n$ there exists $\lambda_{ij}\in\C$ such that
 \[x_i g_j-x_jg_i=\lambda_{ij}(x_0g_1-x_1g_0).\]
 For $i=0$ and $i=1$ we obtain that
 \[
\begin{cases}
 x_0 g_j-x_jg_0=\lambda_{0j}(x_0 g_1 -x_1 g_0 )\\ x_1 g_j -x_j g_1 =\lambda_{1j}(x_0 g_1 -x_1 g_0 )
\end{cases}\]
for every $j\in\{2,\dots,n\}$, and thus
\begin{equation}\label{eq: tutti multipli di G0, ipotesi}
\begin{cases}
 x_0( g_j -\lambda_{0j} g_1 )=(x_j-\lambda_{0j}x_1) g_0 \\ x_1( g_j +\lambda_{1j} g_0 )=(x_j+\lambda_{1j}x_0) g_1 .
\end{cases}
 \end{equation}
 Therefore there exist $h_0,h_1, h_{0j} , h_{1j} \in\C[x_0,\dots,x_n]_{d-2}$ such that
 \begin{equation}\label{eq: tutti multipli di G0, prima sostituzione}
 \begin{cases}
 g_0 =x_0h_0\\  g_1 =x_1h_1\\
 g_j -\lambda_{0j}g_1= h_{0j} (x_j-\lambda_{0j}x_1)\\
 g_j +\lambda_{1j} g_0 = h_{1j} (x_j+\lambda_{1j}x_0). \end{cases}
 \end{equation}
 We substitute \eqref{eq: tutti multipli di G0, prima sostituzione} in \eqref{eq: tutti multipli di G0, ipotesi} to obtain
 \begin{equation*}
     \begin{cases}
 x_0 h_{0j} (x_j-\lambda_{0j}x_1)=x_0h_0(x_j-\lambda_{0j}x_1)\\ x_1 h_{1j} (x_j+\lambda_{1j}x_0)=x_1h_1(x_j+\lambda_{1j}x_0).
\end{cases}\Rightarrow
\begin{cases}
  h_{0j} =h_0\\  h_{1j} =h_1
\end{cases}
\mbox{ for every } j\ge 2.
 \end{equation*}
If we substitute $ h_{0j} =h_0$ and $ h_{1j} =h_1$ in \eqref{eq: tutti multipli di G0, prima sostituzione} and we subtract, then we get
  \begin{equation*}
 \begin{cases}
 g_j -\lambda_{0j}x_1h_1=h_0(x_j-\lambda_{0j}x_1)\\
 g_j +\lambda_{1j}x_0h_0=h_1(x_j+\lambda_{1j}x_0)     \end{cases}\Rightarrow
h_1(x_j-\lambda_{0j}x_1+\lambda_{1j}x_0)=h_0(x_j-\lambda_{0j}x_1+\lambda_{1j}x_0)
\end{equation*}
and so $ h_0=h_1$. Define $h=h_0=h_1=h_{0j}= h_{1j} $. Then \eqref{eq: tutti multipli di G0, prima sostituzione} implies
\begin{equation*}
 g_j-\lambda_{0j}x_1h=h(x_j-\lambda_{0j}x_1)\Rightarrow g_j=x_jh\mbox{ for every } j\in\{0,\dots,n\}.
\end{equation*}
We conclude by observing that in this case $I_{E(T)}=0$, so $\Lambda=\vu$. Note that the case $\dim\Lambda=0$ never occurs.

Now that we have proven our claim about partially symmetric tensors, we assume that $T$ is symmetric. This means that there is a polynomial $f\in\C[x_0,\dots,x_n]_d$ such that $g_j=\de_jf$ for every $j\in\{0,\dots,n\}$. We argue by induction on $d$. The cases $d\le 1$ are straightforward, so we assume that $d\ge 2$. By Schwarz' theorem we have
\begin{align*}
    x_i\de_jh=\de_j(x_ih)=\de_j g_i=\de_j\de_if=\de_i\de_jf=\de_ig_j=\de_i(x_jh)=x_j\de_ih,
\end{align*}
so $x_i\de_jh=x_j\de_i h$ for every $i<j$. This implies that 
\[
    \dim(\p(\langle x_i\de_j h - x_j\de_i h \mid 0\leq i < j \leq n\rangle))\leq 0.
\]
Since $\deg h< d$, we deduce by induction hypothesis that
$h=\lambda^\prime(x_0^2 +\ldots+x_n^2)^{k^\prime}$ for some $\lambda^\prime\in\C$ and some $k^\prime\in\N$. By Euler's formula,
\[\deg(f)\cdot f
=\sum_{j=0}^n x_j\de_jf=\sum_{j=0}^n x_j^2h=\lambda^\prime(x_0^2+\ldots+x_n^2)^{k^\prime +1}.
\]
If we set $k=k^\prime+1$ and $\lambda=\frac{\lambda^\prime}{\deg(f)}$, then our statement on $f$ is proven.
 \end{proof}

\begin{corollary}\label{cor: no pure dimensional hypersurface}
Let $T\in(\C^{n+1})^{\otimes d}$.
\begin{enumerate}
\item $E(T)$ is not a pure dimensional hypersurface. 
\item \label{item: degree of component} The $(n-1)$-dimensional components of $E(T)$ have degree at most $d-1$.
\item\label{item: auteschema è l'intero spazio} Assume that $T= (g_0, \ldots, g_n)\in(\Sym^{d-1}\C^{n+1})^{\oplus (n+1)}$. Then $E(T)=\p^n$ if and only if there exists $h\in\C[x_0,\dots,x_n]_{d-2}$ such that $g_i=x_ih$ for every $i\in\{0,\ldots,n\}$.
\end{enumerate}
\end{corollary}
\begin{proof}Thanks to Remark \ref{non è restrittivo considerare i parzialmente simmetrici}, we can assume that $T$ is partially symmetric.
\begin{enumerate}
        \item Suppose that $E(T)$ is a pure dimensional hypersurface. Then $I_{E(T)}$ is principal and so $\dim\Lambda=0$, in contradiction to Lemma \ref{lem: due G_i nulli}.
\item A $(n-1)$-dimensional component $V$ of $E(T)$ is a fixed component of the linear system $\Lambda\subseteq\p(\C[x_0,\dots,x_n]_d)$, so it has degree at most $d$. If we had $\deg V=d$, then all generators would be multiples of $V$, so $\dim\Lambda=0$, in contradiction to Lemma \ref{lem: due G_i nulli}.
    \item One implication follows from Lemma \ref{lem: first properties}\eqref{bullet: parzialmente simmetrici con lo stesso autoschema}, while the other is Lemma \ref{lem: due G_i nulli}.
\end{enumerate}\end{proof}

We know that the general polynomial has a zero-dimensional eigenscheme. The locus of homogeneous polynomials with a positive-dimensional eigenscheme is not well understood. This set contains all polynomials with a positive-dimensional singular locus, but there exist also forms $f$ for which $\dim R(f)> 0$.    We take a more geometric viewpoint and we determine a necessary condition for this to happen. This condition involves the position of $f$ relative to the isotropic quadric.

\begin{proposition}\label{pro: relation eigenpoints - tangent space to the isotroquadric}
Let $f\in\C[x_0,\dots,x_n]$ be a homogeneous polynomial and let $q=x_0^2+\ldots+x_n^2$ be the isotropic quadric. 
\begin{enumerate}
\item \label{bullet: When do eigenpoints belong to $V(f)$?} Let $P\in\p^n$. Then $P\in V(f)\cap R(f)$ if and only if $P\in V(q) \cap V(f)$ and $T_P V(q)=T_P V(f)$.
    \item If $V(f)$ meets $V(q)$ transversally, then $\dim R(f)\le 0$.
\end{enumerate}
\begin{proof} 
\begin{enumerate}
\item Assume that $P\in R(f)\cap V(f)$. Since $P\in T_P V(f)$, the condition $\nabla f(P)\cdot P=0$ is satisfied.  The assumption $P\in R(f)$ implies that $P=\nabla f(P)$, thus $P\cdot P=\nabla f(P)\cdot P=0$ and therefore $P\in V(q)$. If we let $x=(x_0,\dots,x_n)$ be the vector of the variables, then the space $T_P V(q)$ is defined by the linear polynomial $P\cdot x$, which is the same as the polynomial $\nabla f(P) \cdot x$, because $P\in R(f)$. Hence $T_P V(q)=T_P V(f)$. Conversely, assume that $T_P V(q)=T_P V(f)$. Up to a multiplicative constant, this means $P\cdot x=\nabla f(P)\cdot x$. Therefore $P=\nabla f(P)$ and so $P\in R(f)$. Now we prove that $P\in V(f)$. Since $P\in V(q)\cap R(f)$, the  Euler identity implies that
$\deg(f)\cdot f(P)=\nabla f(P)\cdot P=P\cdot P=0$.
\item Assume by contradiction that $\dim R(f)\ge 1$. Then $\overline{R(f)}\cap V(f)\neq\vu$. A point in $\overline{R(f)}\cap V(q)$ cannot belong to $\Sing(f)$, because $V(q)$ and $V(f)$ are transverse, hence $R(f)\cap V(f)\neq\vu$. By part \eqref{bullet: When do eigenpoints belong to $V(f)$?}, there exists a point $P\in R(f)\cap V(f)\cap V(q)$ such that $T_P V(f)=T_P V(q)$. This contradicts our hypothesis that $V(q)$ and $V(f)$ are transverse.
\end{enumerate}
\end{proof}
\end{proposition}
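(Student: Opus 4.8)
The plan is to translate both statements into the single pair of gradient equations governing eigenpoints. The starting observation is that, viewed as a point of $\p^n$, $P$ lies in $R(f)$ exactly when $\nabla f(P)$ is a \emph{nonzero} scalar multiple of $P$, say $\nabla f(P)=\lambda P$ with $\lambda\neq 0$; the condition $\lambda\neq 0$ is precisely what separates regular eigenpoints from $\Sing(f)$, where $\nabla f$ vanishes. I would also record the two tangent hyperplanes: at a smooth point the space $T_PV(f)$ is cut out by $\nabla f(P)\cdot x=0$, while $T_PV(q)$ is cut out by $\nabla q(P)\cdot x=2P\cdot x=0$, that is by $P\cdot x=0$.

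For the forward implication of part (1), I would assume $P\in R(f)\cap V(f)$ and write $\nabla f(P)=\lambda P$ with $\lambda\neq 0$. Euler's identity gives $\deg(f)\,f(P)=\nabla f(P)\cdot P=\lambda\,(P\cdot P)=\lambda\,q(P)$; since $f(P)=0$ and $\lambda\neq 0$, this forces $q(P)=0$, so $P\in V(q)$. Moreover $T_PV(f)$ is defined by $\lambda P\cdot x=0$, the same hyperplane as $P\cdot x=0$ defining $T_PV(q)$, so the tangent spaces coincide. For the converse, the equality $T_PV(f)=T_PV(q)$ means the linear forms $\nabla f(P)\cdot x$ and $2P\cdot x$ are proportional; a tangent hyperplane is only defined at a smooth point, so $\nabla f(P)\neq 0$, whence $\nabla f(P)=\mu P$ with $\mu\neq 0$, i.e. $P\in R(f)$, and together with $P\in V(f)$ this closes the equivalence.

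For part (2) I would argue by contradiction, assuming $\dim R(f)\ge 1$. The key preliminary remark is that transversality of $V(f)$ and $V(q)$ forces $\Sing(f)\cap V(q)=\vu$, since a singular point of $f$ on $V(q)$ would be a non-transverse intersection point and $\Sing(f)\subseteq V(f)$ by Euler. As $\overline{R(f)}$ is then a positive-dimensional projective variety, it must meet the hypersurface $V(q)$; I would pick $P\in\overline{R(f)}\cap V(q)$. Because $P\notin\Sing(f)$ we have $\nabla f(P)\neq 0$, and because $\overline{R(f)}\subseteq E(f)$ the gradient $\nabla f(P)$ is proportional to $P$; being nonzero, this puts $P\in R(f)$. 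Euler's identity with $q(P)=0$ then gives $f(P)=0$, so $P\in R(f)\cap V(f)$, and part (1) yields $T_PV(f)=T_PV(q)$, contradicting transversality.

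The delicate point — and the reason I would intersect $\overline{R(f)}$ with $V(q)$ rather than with $V(f)$ — is exactly the passage from a point of the closure $\overline{R(f)}$ to an honest regular eigenpoint. On $\overline{R(f)}$ one only knows that $\nabla f(P)$ is proportional to $P$, a condition also satisfied by points of $\Sing(f)$, so a boundary point could a priori be singular. The transversality hypothesis is precisely what excludes singular points of $f$ along $V(q)$, and I expect verifying this exclusion (together with the standard fact that a positive-dimensional closed subvariety of $\p^n$ meets every hypersurface) to be the step requiring the most care.
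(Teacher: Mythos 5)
Your proof is correct and follows essentially the same route as the paper's: Euler's identity together with the identification of both tangent hyperplanes with the vanishing of $P\cdot x$ for part (1), and for part (2) a contradiction argument that intersects $\overline{R(f)}$ with $V(q)$ and uses transversality to exclude points of $\Sing(f)$ before invoking part (1). Your bookkeeping with an explicit nonzero scalar $\lambda$ (rather than the paper's shorthand $P=\nabla f(P)$) and your explicit remark that $\Sing(f)\subseteq V(f)$ by Euler are slightly more careful renderings of the same argument.
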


Now that we have a necessary condition for the eigenscheme to have positive dimension, we would like to find a sufficient one. 
Of course, it is easy to exhibit examples with a positive-dimensional singular locus, so we focus on $R(f)$. The following results allow us to build an infinite family of curves in $\p^2$ with a zero-dimensional singular locus and a positive-dimensional regular eigenscheme. As a byproduct, this will also show that the bound we established in Corollary \ref{cor: no pure dimensional hypersurface}\eqref{item: degree of component} is sharp for ternary forms.
\begin{lemma}\label{lem: C tangent}
Let $c$ be a plane conic tangent to the isotropic conic at $P$ and $Q$. Let $l=\langle P,Q\rangle$. Then there exists $\lambda\in\C$ such that $x_i\partial_j c -x_j\partial_i c = \lambda l(x_i\partial_j l-x_j\partial_i l)$ for every $0\leq i<j\leq 2$. In particular, $E(c)$ consists of the line $l$ and the unique point in $E(l)$.
\end{lemma}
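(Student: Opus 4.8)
The plan is to first pin down the algebraic relation between $c$, the isotropic conic $q$ and the double line $l^2$, and then read off both the eigenscheme equations and the set $E(c)$ from a one-line computation.

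First I would reduce to a convenient coordinate system. Since tangency to $q$ and the formation of the eigenscheme are both invariant under $\SO_3(\C)$ (as recorded via \cite{QZ}), I can apply an orthogonal change of coordinates moving the two tangency points to $P=(1:i:0)$ and $Q=(1:-i:0)$; these lie on $q=x_0^2+x_1^2+x_2^2$, and the line joining them is $l=x_2$. Writing a general conic $c = A x_0^2 + B x_1^2 + D x_2^2 + E x_0x_1 + F x_0x_2 + G x_1x_2$ and imposing that $c$ passes through $P$ and $Q$ with the same tangent lines as $q$ (that is, $\nabla c$ is proportional to $\nabla q$ at each of $P,Q$), a short linear computation forces $A=B$ and $E=F=G=0$. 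Hence $c = A\,q + (D-A)\,l^2$, so there exist $\alpha,\beta\in\C$ with $c=\alpha q+\beta l^2$; moreover $\beta\neq 0$ since $c$ is tangent to $q$ at the two specific points $P,Q$ and is therefore not a scalar multiple of $q$.

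The key step is then immediate. Since $\partial_k q = 2x_k$ and $\partial_k(l^2)=2l\,\partial_k l$, for every $k$ we have $\partial_k c = 2\alpha x_k + 2\beta l\,\partial_k l$. Plugging this in,
\[
x_i\partial_j c - x_j\partial_i c = 2\alpha(x_ix_j-x_jx_i) + 2\beta\, l\,(x_i\partial_j l - x_j\partial_i l) = 2\beta\, l\,(x_i\partial_j l - x_j\partial_i l),
\]
so the claim holds with $\lambda=2\beta$. In particular the contribution of the isotropic conic drops out entirely, which is the mechanism behind Lemma \ref{lem: first properties}. Finally I would extract the geometry of $E(c)$. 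The displayed identity shows that $I_{E(c)}$ is generated by the products $l\cdot(x_i\partial_j l - x_j\partial_i l)$, i.e. by $l$ times the generators of $I_{E(l)}$, where $l$ is viewed as a linear form. A linear form has a single reduced eigenpoint $R_l$, the common zero of the $x_i\partial_j l - x_j\partial_i l$, which in the chosen coordinates is $(0:0:1)$, visibly off the line $l$. As $l(R_l)\neq 0$, the form $l$ is a nonzerodivisor modulo $I_{E(l)}$, so $I_{E(c)}=l\cdot I_{E(l)}=(l)\cap I_{E(l)}$; hence $E(c)$ is the reduced union $l\cup\{R_l\}$, the line $l$ together with the unique point of $E(l)$.

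The step I expect to be the only real work is establishing $c=\alpha q+\beta l^2$: one must know (or recompute) that two conics tangent at $P$ and $Q$ lie in a pencil whose degenerate member is the double line $(PQ)^2$. The coordinate computation above makes this self-contained, but care is needed to interpret tangency as genuine intersection multiplicity two at smooth points and to exclude the trivial case $c=\lambda q$; once the relation is in hand, the remaining two steps are purely formal.
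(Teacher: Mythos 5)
Your proof is correct, but it takes a genuinely different route from the paper. The paper works with an arbitrary $l$ (normalized only so that $\partial_i l\neq 0$), shows directly that each $x_i\partial_j c-x_j\partial_i c$ vanishes along $l$ by evaluating at $aP+bQ$ and using $\partial_i c(P)=2P_i$, then pins down the cofactor $s_{ij}$ coefficient by coefficient, and finally proves $\lambda_{01}=\lambda_{02}=\lambda_{12}$ via the Koszul relation $x_0f_{12}-x_1f_{02}+x_2f_{01}=0$. You instead normalize $P,Q$ to $(1:\pm i:0)$ using the $2$-transitivity of $\SO_3(\C)$ on the isotropic conic, establish the structural identity $c=\alpha q+\beta l^2$ (the classical pencil of conics bitangent to $q$ along $l$), and then the whole statement collapses to a one-line computation; this makes the mechanism transparent and ties the result directly to Lemma \ref{lem: first properties}\eqref{item: sommare la quadrica isotropa non cambia l'autoschema}, at the cost of a stronger coordinate normalization than the paper's and of the (self-contained, correctly carried out) verification that bitangency forces membership in the pencil $\langle q,l^2\rangle$. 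Your identification $I_{E(c)}=l\cdot I_{E(l)}=(l)\cap I_{E(l)}$ is a cleaner justification of the scheme-theoretic conclusion than the paper gives. One point worth making explicit in either approach: when $\beta=0$, i.e.\ $c$ is a scalar multiple of $q$, the displayed identity still holds with $\lambda=0$ but $E(c)=\p^2$, so the ``in particular'' clause tacitly excludes this case; you flag this correctly, and the paper's proof has the same implicit assumption.
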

\begin{proof}
Thanks to \cite[Theorem 2.20]{QZ}, we may assume that $\de_il\neq 0$ for every $i\in\{0,1,2\}$. First we want to prove that each of the conics $x_i\partial_j c -x_j\partial_i c$ is the union of $l$ and another line. For this purpose we show that if we evaluate $x_i\partial_j c -x_j\partial_i c$ at a general point of $l$ we get 0. A point of $l=\Span{P,Q}$ can be written as $aP+bQ$ for some $a,b\in\C$. Let $(P_0:P_1:P_2)$ and $(Q_0:Q_1;Q_2)$ be coordinates for $P$ and $Q$. Since $c$ is tangent to $q$ at $P$, we have that 
$\de_ic(P)=\de_iq(P)=2P_i$. The same holds for $Q$, hence
\begin{align*}	(x_i\partial_j c-x_j\partial_i c)&(a P + b Q) = (a P_i +b Q_i)\partial_jc(a P + b Q) -  (a P_j +b Q_j)\partial_ic(a P + b Q)\\
	& = (a P_i +b Q_i)(a\partial_jc(P)+b\partial_jc(Q)) - (a P_j +b Q_j)(a\partial_ic(P)+b\partial_ic(Q))\\
	& = 2(a P_i +b Q_i)(a P_j +b Q_j) - 2(a P_j+b Q_j)(a P_i +b Q_i) = 0.
	\end{align*}
If we identify $l$ with the polynomial defining it, then $l\mid x_i\partial_j c-x_j\partial_i c$, so there exist linear forms $s_{ij}$ such that $x_i\partial_j c-x_j\partial_i c = l s_{ij}$. Next we determine $s_{ij}$. By equating the coefficients of $x_i^2$ and $x_j^2$ of the last expression we obtain $\partial_i\partial_jc = (\partial_il)(\partial_is_{ij})$ and $-\partial_i\partial_jc = (\partial_jl)(\partial_js_{ij})$. If we call $\lambda_{ij}=-\frac{\de_i\de_jc}{(\de_il)(\de_jl)}\in\C$, this implies that $\partial_i s_{ij} = \lambda_{ij}\partial_j l$ and $\partial_j s_{ij} = \lambda_{ij}\partial_i l$. Let $k$ be the third index and $A$ be the point with coordinates $x_i=x_j=0$. Then
$$0=(x_i\partial_j c-x_j\partial_i c)(A)=l(A)s_{ij}(A)=\de_kl\cdot\de_ks_{ij}.$$
Thanks to our assumption that $\de_kl\neq 0$, we deduce that $\partial_k s_{ij} =0$ and therefore $s_{ij}=x_i\de_is_{ij}+x_j\de_js_{ij}=\lambda_{ij}(x_i\partial_jl-x_j\partial_il)$ by Euler's identity. Thus
	\begin{equation*}\label{eq: 1-dim}
	x_i\partial_j c-x_j\partial_i c = \lambda_{ij} l(x_i\partial_j l-x_j\partial_i l).
	\end{equation*}
Now we prove that $\lambda_{01}=\lambda_{02}=\lambda_{12}$. Indeed
	\begin{align*}
	0 &= x_0(x_1\partial_2 c - x_2 \partial_1 c) - x_1(x_0\partial_2 c - x_2 \partial_0 c) +x_2(x_0\partial_1 c - x_1 \partial_0 c) \\
	&=x_0\lambda_{12}l(x_1\partial_2 l - x_2 \partial_1 l) - x_1\lambda_{02}l(x_0\partial_2 l - x_2 \partial_0 l) +x_2\lambda_{01}l(x_0\partial_1 l - x_1 \partial_0 l)\\
	&= l((\lambda_{12}-\lambda_{02})x_0x_1\partial_2l +(-\lambda_{12}+\lambda_{01})x_0x_2\partial_1l+(\lambda_{02}-\lambda_{01})x_1x_2\partial_0l)
	\end{align*}
	This implies that 	\begin{equation}\label{eq: k are the same}
	(\lambda_{12}-\lambda_{02})x_0x_1\partial_2l +(-\lambda_{12}+\lambda_{01})x_0x_2\partial_1l + (\lambda_{02}-\lambda_{01})x_1x_2\partial_0l
	\end{equation}
	is the zero polynomial. Since all partial derivatives of $l$ are not zero, we get the desired equality by evaluating \eqref{eq: k are the same} at $(1:0:0)$, $(0:1:0)$ and $(0:0:1)$. 
	\end{proof}
\begin{proposition}\label{thm: eigenscheme 1-dim all degs}
	Let $P,Q$ be points on the isotropic conic $q=x_0^2+x_1^2+x_2^2$ and let $l$ be the line $\langle P,Q\rangle$. Let $c_1,\dots,c_s$ be conics which are all tangent to $q$ at $P$ and $Q$. Then	\begin{enumerate}
\item\label{item: coniche} $E(c_1\cdot c_2\cdot\ldots\cdot c_s		)$  is the scheme-theoretic union of $l$, a curve of degree $2(s-1)$ and the point $E(l)$.
\item\label{item: coniche pi retta} $E(l\cdot c_1\cdot c_2\cdot\ldots\cdot c_s	)$  is the scheme-theoretic union of a curve of degree $2s$ and the point $E(l)$.
	\end{enumerate}
Such 1-dimensional components of the eigenscheme are also tangent to $q$ at $P$ and $Q$.
\end{proposition}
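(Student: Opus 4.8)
The plan is to compute the $2\times 2$ minors cutting out the eigenscheme directly, factoring them by means of the Leibniz rule and Lemma \ref{lem: C tangent}. Throughout write $m_{ij}:=x_i\partial_j l-x_j\partial_i l$, so that $E(l)=V(m_{01},m_{02},m_{12})$ is the single reduced point obtained as the pole of $l$ with respect to $q$; since $l$ is a secant of $q$ through the distinct points $P,Q$, this pole lies off both $l$ and $q$. For \eqref{item: coniche} put $f=c_1\cdots c_s$. Expanding $\partial_j f=\sum_m\big(\prod_{n\ne m}c_n\big)\partial_j c_m$ and applying Lemma \ref{lem: C tangent} to each factor, which gives $x_i\partial_j c_m-x_j\partial_i c_m=\lambda_m\, l\, m_{ij}$ for a constant $\lambda_m$, I obtain
\[
x_i\partial_j f-x_j\partial_i f=l\,g\,m_{ij},\qquad g:=\sum_{m}\lambda_m\prod_{n\ne m}c_n,
\]
where $\deg g=2(s-1)$. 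For \eqref{item: coniche pi retta} the additional factor $l$ produces one extra term, and the same substitution yields $x_i\partial_j f-x_j\partial_i f=G\,m_{ij}$ with $G:=\prod_m c_m+l^2 g$ of degree $2s$.

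From these factorizations I would read off the scheme structure. In both cases $I_{E(f)}=F\cdot I_{E(l)}$, with $F=lg$ in \eqref{item: coniche} and $F=G$ in \eqref{item: coniche pi retta}. Since $I_{E(l)}$ is the prime ideal of a point, once one knows that $F$ does not vanish at $E(l)$ the form $F$ is a nonzerodivisor modulo $I_{E(l)}$, so $F\cdot I_{E(l)}=(F)\cap I_{E(l)}$; this identifies $E(f)$ as the scheme-theoretic union of $V(F)$ with the reduced point $E(l)$. In \eqref{item: coniche}, as soon as $l\nmid g$ one has $(lg)=(l)\cap(g)$ in the UFD $\C[x_0,x_1,x_2]$, so $V(F)$ is the line $l$ together with the degree $2(s-1)$ curve $V(g)$; in \eqref{item: coniche pi retta}, $l\nmid G$ gives that $V(G)$ is a single degree $2s$ curve not containing $l$. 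This yields the asserted decompositions.

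For the tangency statement I would invoke the classical fact that the conics having contact of order two with $q$ at both $P$ and $Q$ form the pencil $\langle q,l^2\rangle$; hence each $c_m=\alpha_m q+\beta_m l^2$ and both $g$ and $G$ are polynomials in $q$ and $l^2$. Reducing modulo $q$ leaves $g$ (respectively $G$) proportional to $l^{2(s-1)}$ (respectively $l^{2s}$), so that $V(g)\cap V(q)$ and $V(G)\cap V(q)$ are supported on $V(l)\cap V(q)=\{P,Q\}$. Thus the degree $2(s-1)$ and degree $2s$ components meet $q$ only at $P$ and $Q$, which is the desired tangency (note that these components are in general singular at $P,Q$ for $s\ge 3$, so tangency must be read as contact, not as equality of tangent lines).

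I expect the main obstacle to be the step from the factored minors to the genuinely scheme-theoretic conclusion, rather than the factorization itself. Concretely, one has to justify the ideal identities $F\cdot I_{E(l)}=(F)\cap I_{E(l)}$ and $(lg)=(l)\cap(g)$ instead of arguing only set-theoretically, and to check the non-degeneracy facts they rest on, namely $l\nmid g$, $l\nmid G$ and $F(E(l))\ne 0$, which guarantee that $E(l)$ appears as an isolated reduced point off the curve $V(F)$. Once the pencil parametrization $c_m\in\langle q,l^2\rangle$ is available, the tangency claim reduces to the short residue computation modulo $q$ described above.
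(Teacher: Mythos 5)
Your factorization via the Leibniz rule and Lemma \ref{lem: C tangent} is exactly the argument the paper gives, including the key point that the constants $\lambda_u$ are independent of $(i,j)$, so that a single polynomial factor splits off all three minors simultaneously. You are in fact somewhat more thorough than the paper: the published proof stops at the factorization, and neither addresses the passage from $F\cdot I_{E(l)}$ to the scheme-theoretic union $(F)\cap I_{E(l)}$ (with the attendant non-degeneracy checks you flag) nor proves the final tangency claim, for which your pencil $\langle q,l^2\rangle$ argument is a correct completion.
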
	
\begin{proof}\begin{enumerate}
\item	Let $f = c_1\cdots c_s$. By Lemma \ref{lem: C tangent} we obtain 
	\[x_i\partial_j f - x_j \partial_i f 
	= \sum_{u=1}^s (x_i\partial_j c_u-x_j\partial_i c_u)\prod_{v\neq u}c_v
	= l(x_i\partial_j l-x_j\partial_i l)\sum_{u=1}^s \left(\lambda_u\prod_{v\neq u}c_v\right)
	\]
	for some $\lambda_u\in \C$. It is important to observe that in Lemma \ref{lem: C tangent} we proved that the numbers $\lambda_u$ do not depend on $i$ and $j$. Hence the common zero locus of the three polynomials $x_i\partial_j f - x_j \partial_i f$ are $l$, the degree $2(s-1)$ curve $\sum_{u=1}^s \lambda_u\prod_{v\neq u}c_v$ and the unique point satisfying $x_i\partial_j l-x_j\partial_i l=0$ for every $0\leq i<j\leq 2$. By definition, this point is $E(l)$.
\item Let $f = l\cdot c_1\cdots c_s$. By Lemma \ref{lem: C tangent} we obtain 
	\begin{align*}
x_i\partial_j f - x_j \partial_i f &= (x_i\partial_j l-x_j\partial_i l)\prod_{v=1}^sc_v+l\sum_{u=1}^s \left((x_i\partial_j c_u-x_j\partial_i c_u)\prod_{v\neq u}c_v\right)\\
&= (x_i\partial_j l-x_j\partial_i l)\left(\prod_{v=1}^sc_v +l^2\sum_{u=1}^s \left( \lambda_u\prod_{v\neq u}c_v\right)\right).
	\end{align*}
Hence $E(f)$ consists of the degree $2s$ curve $\prod_{v=1}^sc_v +l^2\sum_{u=1}^s k_u\prod_{v\neq u}c_v$ and the point $E(l)$.
	\end{enumerate}
\end{proof}

\begin{example}
	Let $P = (1:i:0)$ and $Q=(3i:-4i:5)$. Then $l$ is the line defined by $5x_0+5ix_1-(4+3i)x_2=0$ and $c$ is the conic $$(1-i)x_0^2+(1+i)x_1^2+\left(\frac{49}{25}-\frac{7}{25}i\right)x_2^2+2x_0x_1-\left(\frac{6}{5}-\frac{8}{5}i\right)x_0x_2-\left(\frac{8}{5}+\frac{6}{5}i\right)x_1x_2=0.$$
Theorem \ref{thm: eigenscheme 1-dim all degs} states that $E(l\cdot c)$ consists of the point $E(l) = \{(4-3i:3+4i:-5)\}$ together with the conic $c+\lambda l^2$. According to the proof of Lemma \ref{lem: C tangent}, we have
$$\lambda=\frac{\partial_0\partial_1c}{\partial_0l\cdot \partial_1l} = -\frac{2}{25}i.$$
\end{example}

\section{Eigenschemes of ternary forms}\label{sec: ternary forms}

When dealing with ternary forms, we are able to say something more precise on eigenschemes and their ideals. For instance, the ideal of the eigenscheme of a general plane curve is saturated and we can compute its Hilbert series. We also study the variety parametrizing configurations of eigenpoints, we compute its dimension and we construct a simple birational model. Furthermore, we characterize bases of ideals of eigenschemes of plane curves.

\begin{proposition}\label{pro: saturated} Let $T=(g_0,g_1,g_2)\in(\Sym^{d-1}\C^{3})^{\oplus 3}$ and $S=\C[x_0,x_1,x_2]$. If $\dim E(T)=0$, then $I_{E(T)}$ is saturated and the sequence
\begin{equation*}\label{eq: resolution}
0\to S(-d-1)\oplus 
S(-2d+1) \xrightarrow
{\left(
\begin{array}{cc}
x_0 & g_0\\
x_1& g_1\\
x_2&g_2\\
\end{array}
\right)
} S (-d)^{\oplus 3}\to I_{E(T)} \to 0
\end{equation*}
is a minimal free resolution of $I_{E(T)}$. As a consequence, the Hilbert series of $E(T)$ is $$
\Hilb(S/I_{E(T)},t)=\frac{1-3t^d+t^{d+1}+t^{2d-1}}{(1-t)^3}
$$
and $\deg E(T) = d^2-d+1$.
\end{proposition}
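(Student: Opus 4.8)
The plan is to identify $I_{E(T)}$ with the ideal of maximal minors of the $3\times 2$ matrix
\[
A=\begin{pmatrix} x_0 & g_0\\ x_1 & g_1\\ x_2 & g_2\end{pmatrix}
\]
displayed in the statement, and then to read the resolution off the Hilbert--Burch theorem (equivalently, the Eagon--Northcott complex of $A$). First I would observe that by Definition~\ref{def:eigenscheme} the three $2\times 2$ minors of $A$ are exactly the degree-$d$ generators $x_ig_j-x_jg_i$ of $I_{E(T)}$, so $I_{E(T)}=I_2(A)$. The candidate complex is
\[
0\to S(-d-1)\oplus S(-2d+1)\xrightarrow{\ A\ } S(-d)^{\oplus 3}\longrightarrow I_{E(T)}\to 0,
\]
where the second map sends the standard basis vectors to the signed maximal minors $\Delta_i$ of $A$. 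The grading shifts are forced: the first column of $A$ has entries of degree $1$ and the second entries of degree $d-1$, which places the two source generators in degrees $d+1$ and $2d-1$. That the composite of the two maps vanishes is the cofactor identity $\sum_i(-1)^i a_{ij}\Delta_i=0$, i.e.\ the Laplace expansion of a $3\times 3$ determinant with a repeated column, and holds with no hypothesis on $T$; so the displayed sequence is a complex.

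The only place where $\dim E(T)=0$ is needed is acyclicity. For a $3\times 2$ matrix the ideal of maximal minors has codimension at most $3-2+1=2$, and Hilbert--Burch guarantees that the displayed complex is a resolution precisely when this bound is attained, that is, when $\codim I_{E(T)}=2$ (over the Cohen--Macaulay ring $S$ codimension and grade coincide, so no separate depth check is required). Since $E(T)\subseteq\p^2$, the hypothesis $\dim E(T)=0$ says exactly that $E(T)$ is finite, i.e.\ $I_{E(T)}$ has height $2$, which supplies the codimension we need. I would then note that the resolution is minimal because, for $d\ge 2$, every entry $x_i,g_i$ of $A$ lies in the irrelevant ideal $\mathfrak m=(x_0,x_1,x_2)$, so no unit appears in the differentials.

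The remaining assertions are formal consequences. Saturation follows from Auslander--Buchsbaum: the resolution of $S/I_{E(T)}$ has length $2$, so $\operatorname{depth}(S/I_{E(T)})=3-2=1>0$, whence $H^0_{\mathfrak m}(S/I_{E(T)})=0$ and $I_{E(T)}$ is saturated. Taking the alternating sum of the shifted free modules gives
\[
\Hilb(S/I_{E(T)},t)=\frac{1-3t^d+t^{d+1}+t^{2d-1}}{(1-t)^3},
\]
and writing $N(t)=1-3t^d+t^{d+1}+t^{2d-1}$ one checks $N(1)=N'(1)=0$, so the series has a simple pole at $t=1$ (matching $\dim S/I_{E(T)}=1$) and the degree of $E(T)$ is the value of $N(t)/(1-t)^2$ at $t=1$, namely $\tfrac12 N''(1)=d^2-d+1$.

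The main obstacle is not a computation but the correct invocation of Hilbert--Burch: the substantive input is translating the geometric hypothesis $\dim E(T)=0$ into the codimension-$2$ (equivalently grade-$2$) condition that makes the Eagon--Northcott complex acyclic. Once that is secured, minimality, saturation, the Hilbert series, and the degree are all routine bookkeeping.
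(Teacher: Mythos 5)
Your proposal is correct and follows essentially the same route as the paper, which likewise rests on the Hilbert--Burch theorem for the codimension-two ideal of $2\times 2$ minors (the paper cites \cite[Section 20.4]{EisenCommut} and \cite[Section 5]{ASS} for exactly the steps you carry out by hand). The only difference is that you spell out the details the paper delegates to references: the acyclicity from $\codim I_{E(T)}=2$, minimality, the depth argument for saturation, and the Hilbert-series bookkeeping, all of which check out.
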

\begin{proof} Thanks to \cite[Section 20.4]{EisenCommut}, the quotient $S/I_{E(T)}$ is Cohen-Macaulay and therefore $I_{E(T)}$ is saturated. The minimal free resolution of $I_{E(T)}$ is presented in \cite[Section 5]{ASS} for the case in which $E(T)$ is reduced, and the argument carries over to any form with a zero-dimensional eigenscheme. 
\end{proof}
\begin{remark}\label{rem: saturate}
 Let $d\ge 3$ and let $T\in(\C^{n+1})^{\otimes d}$. If $\dim E(T)=0$, then $I_{E(T)}$ is still saturated (see for instance \cite[Corollary A2.13]{EisenCommut}). However, the ideal does not need to be saturated, in general. As an example, if $f = x_0x_2^2+x_1x_3^2+x_2x_3x_4$ then $I_{E(f)}$ is not saturated. In this case $\dim E(f)=2$.
\end{remark}

Next we take a step back and we look at all possible configurations of eigenpoints in the plane. We start by defining the variety which parametrizes them.

\begin{definition}
\label{def: phi}    Let $d\in\N$ and let $(\p^2)^{(d^2-d+1)}$ be the symmetric power of $\p^2$ with itself $d^2-d+1$ times, which parametrizes unordered sets of $d^2-d+1$ points in the plane. Define
\[
\begin{matrix}
\phi_d:&\p(\C[x_0,x_1,x_2]_d)&\dashrightarrow & (\p^2)^{(d^2-d+1)}\\
&f&\mapsto &E(f).
\end{matrix}
\]
This rational map is defined on all forms $f$ such that $E(f)$ is reduced and zero-dimensional. Following \cite[Section 5]{ASS}, we define $\Eig_{d,\Sym}$ to be the closure of the image of $\phi_d$.
\end{definition}

\begin{remark}\label{rmk: dimension of  Eig}
In the proof of \cite[Theorem 5.5]{ASS}, the authors show that $\Eig_{d,\Sym}$ is birational to the projectivization of the quotient vector space $U/H$, where
\[
 U=\left\lbrace\begin{pmatrix} x_0 & x_1 & x_2 \\
 \de_0f & \de_1 f & \de_2 f
 \end{pmatrix} \mid f\in \C[x_0,x_1,x_2]_d
 \right\rbrace
\]
and 
\[
 H=\left\lbrace \begin{pmatrix}
    1 & 0 \\
    g & \lambda 
 \end{pmatrix} \mid g\in \C[x_0,x_1,x_2]_{d-2}\mbox{, } \begin{pmatrix}
 1 & 0 \\
 g & \lambda
 \end{pmatrix}U\subseteq U\mbox{ and }
 \lambda\in \C\right\rbrace.
\]
They claim that $\dim H=1$ and deduce that  $\dim\Eig_{d,\Sym}=\dim\p(\C[x_0,x_1,x_2]_d)$. In particular, they state that $\phi_{d}$ is generically finite. However, this is true only when $d$ is odd. Indeed, Lemma \ref{lem: first properties}(\ref{item: sommare la quadrica isotropa non cambia l'autoschema}) shows that the general fiber of $\phi_{d}$ has dimension at least one whenever $d$ is even. In the next lemma we  show that $\dim H = 2$ when $d$ is even, thereby computing the generic fiber of $\phi_{d}$
.
\begin{lemma}\label{lem: dim of H}
    Let $q=x_0^2+x_1^2+x_2^2$ and let $d\ge 2$. If $d$ is odd, then 
    \[
    H = \left\lbrace \begin{pmatrix}
    1 & 0 \\
    0 & \lambda 
 \end{pmatrix} \mid 
 \lambda\in \C\right\rbrace.
    \]
    If $d=2(k-1)$ is even, then
    \[
    H = \left\lbrace \begin{pmatrix}
    1 & 0 \\
    g & \lambda 
 \end{pmatrix} \mid g=\mu q^{k-2}\mbox{ and }
 \lambda,\mu\in \C\right\rbrace.
    \]
 In particular, if $d$ is odd then $\dim H =1$ and if $d$ is even then $\dim H =2$.
\begin{proof}
Let $g\in \C[x_0,x_1,x_2]_{d-2}$ such that
\[\begin{pmatrix}
    1 & 0 \\
    g & \lambda 
 \end{pmatrix}\in H.\]
 Then for every $f\in \C[x_0,x_1,x_2]_{d}$ there exists $F\in \C[x_0,x_1,x_2]_{d}$ such that
 \[
 \begin{pmatrix}
    1 & 0 \\
    g & \lambda 
 \end{pmatrix}\begin{pmatrix} x_0 & x_1 & x_2 \\
 \de_0f & \de_1 f & \de_2 f
 \end{pmatrix}=
 \begin{pmatrix} x_0 & x_1 & x_2 \\
 \de_0F & \de_1 F & \de_2 F
 \end{pmatrix},
 \]
 hence $\de_i F=x_i g+\lambda\de_i f$ for every $i\in\{0,1,2\}$. This implies that
$$F=\frac1d(x_0\de_0F+x_1\de_1 F+x_2\de_2F)=\frac{qg}{d}+\lambda f$$
 and therefore $x_ig+\lambda \de_i f=\de_i F=\frac{2x_ig+q\de_ig}{d}+\lambda \de_i f$. Thus \begin{equation}
     \label{eq: Bernd}
(d-2)x_ig=q\de_ig.
 \end{equation} If $d=2$, then $g$ is constant, as required. If $d=3$, then $x_ig=q\de_ig$ and so $g=0$, because $q$ is irreducible and $g$ is a linear form. If $d\ge 4$, then there exists $g_1\in\C[x_0,x_1,x_2]_{d-4}$ such that $g=g_1q$. Therefore
 \begin{align*}
(d-2)x_ig_1q=q\de_i(g_1q)&\Rightarrow (d-2)x_ig_1=q\de_ig_1+2x_ig_1\Rightarrow (d-4)x_ig_1=q\de_ig_1.
\end{align*}
This means that $g_1$ satisfies the same property that $g$ satisfies by \eqref{eq: Bernd}, so we can repeat the procedure and say that there exists $g_2\in\C[x_0,x_1,x_2]_{d-6}$ such that $g_1=g_2q$. At each step, the degree of the form decreases by 2, thus preserving the parity of the degree. There are two possibilities for $g_{k-2}$. If $d=2k-1$ is odd, then $g_{k-2}$ is a linear form and relation \eqref{eq: Bernd} implies $g_{k-2}=0$ and so $g=0$. On the other hand, if $d=2k-2$ is even, then $g_{k-2}$ is a constant, so $g=qg_1=q^2g_2=\ldots=q^{k-2}g_{k-2}$.
\end{proof}
\end{lemma}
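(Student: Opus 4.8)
The plan is to translate the membership condition defining $H$ into a single differential equation on $g$, and then to solve that equation using the irreducibility of $q$.

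First I would unwind what $\begin{pmatrix} 1 & 0 \\ g & \lambda \end{pmatrix} U \subseteq U$ means. Multiplying a generator of $U$ on the left leaves the top row $(x_0, x_1, x_2)$ unchanged and sends the bottom row to $(x_0 g + \lambda \de_0 f, x_1 g + \lambda \de_1 f, x_2 g + \lambda \de_2 f)$; membership in $U$ forces this triple to be the gradient $(\de_0 F, \de_1 F, \de_2 F)$ of some $F \in \C[x_0,x_1,x_2]_d$. To isolate a condition on $g$ alone, I would contract with the vector of variables and apply Euler's identity to both $f$ and $F$: this pins down $F = \frac{qg}{d} + \lambda f$. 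Differentiating this expression for $F$ and matching it against $\de_i F = x_i g + \lambda \de_i f$ cancels the $\lambda \de_i f$ terms and produces
\[
(d-2)\, x_i g = q\, \de_i g \qquad (i = 0, 1, 2).
\]
Since this is independent of $f$ and of $\lambda$, the parameter $\lambda$ stays free and everything reduces to classifying the $g$ solving this system.

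The core of the argument is solving $(d-2) x_i g = q \de_i g$, and here the key leverage is that $q$ is irreducible of degree two. For $d \ge 4$ the coefficient $d-2$ is nonzero, so $q$ divides $x_i g$; as $q \nmid x_i$, irreducibility forces $q \mid g$. Writing $g = q g_1$ and substituting, the equation collapses to $(d-4) x_i g_1 = q \de_i g_1$, which is the same system with $d$ replaced by $d-2$. Iterating this step peels off one factor of $q$ at a time, dropping the degree by two while preserving its parity, until I land on a base case.

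The two base cases dictate the two branches of the statement. In the odd case $d = 2k-1$ the recursion terminates at a linear form $g_{k-2}$ satisfying $x_i g_{k-2} = q \de_i g_{k-2}$; comparing coefficients (or once more using that the irreducible $q$ cannot equal the product $x_i g_{k-2}$ of two linear forms) forces $g_{k-2} = 0$, hence $g = 0$, and $H$ consists of the diagonal matrices, so $\dim H = 1$. In the even case $d = 2(k-1)$ the recursion stops at a constant $g_{k-2} = \mu$, giving $g = \mu q^{k-2}$ and the two-parameter family in $(\lambda, \mu)$, so $\dim H = 2$. I expect the only delicate point to be the reduction step: one must check that dividing by $q$ reproduces exactly the degree-shifted equation, and track the parity so that the odd recursion lands on a linear form (killing $g$) while the even recursion lands on a constant (the surviving parameter $\mu$). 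Everything else is bookkeeping with Euler's identity.
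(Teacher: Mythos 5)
Your proposal is correct and follows essentially the same route as the paper's proof: the same use of Euler's identity to pin down $F=\frac{qg}{d}+\lambda f$, the same reduction to the equation $(d-2)x_ig=q\,\de_ig$, and the same iterated division by the irreducible conic $q$ with the parity of the degree deciding whether the recursion terminates in a vanishing linear form or a free constant. No gaps to report.
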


We want to study the variety $\Eig_{d,\Sym}$. We start by computing its dimension.
\begin{proposition}\label{pro: se d è dispari l'autoschema generico identifica f}\label{cor: dimension of Eig}
Let $f,g\in\C[x_0,x_1,x_2]_d$ be general homogeneous forms and let $q=x_0^2+x_1^2+x_2^2$ be the isotropic conic.
\begin{enumerate}
    \item If $d=2k$ is even, then $E(f)=E(g)$ if and only if $g=\lambda f+\mu q^k$ for some $\lambda\in\C^*$ and $\mu\in\C$. In particular, the general fiber of the map $\phi_d$ of Definition \ref{def: phi} is a line and $\dim\Eig_{d,\Sym}=\binom{d+2}{2}-2$.
\item \label{item: ident odd}If $d$ is odd, then $E(f)=E(g)$ if and only if $g=\lambda f$ for some $\lambda\in\C^*$. In particular, $\phi_d$ is birational and $\dim\Eig_{d,\Sym}=\binom{d+2}{2}-1$.
\end{enumerate}
\end{proposition}
\begin{proof}
One implication by Lemma \ref{lem: first properties}\eqref{item: sommare la quadrica isotropa non cambia l'autoschema}. Conversely, assume that $E(f)=E(g)$
. Since $f$ and $g$ are general, \cite[Theorem 2.1]{ASS} implies that both $E(f)$ and $E(f+g)$ are reduced of dimension zero and they have the same degree.  By Lemma \ref{lem: first properties},
\[E(f)=E(f)\cap E(g)
\subseteq E(f+ g),\]
so $E(f)=E(f+ g)$.
Assume that $d$ is odd. By Lemma \ref{lem: dim of H}, $E(f)$ has only one preimage under $\phi_d$, up to scalar. The only possibility is that $f$ is a multiple of $f+g$. On the other hand, if $d=2k$ is even, then the general fiber of $\phi_d$ is birational to $H$, as we pointed out in Remark \ref{rmk: dimension of  Eig}. We conclude by Lemma \ref{lem: dim of H}.
\end{proof}
\end{remark}

\begin{remark}\label{def di Td}
Proposition \ref{pro: se d è dispari l'autoschema generico identifica f} implies that $\Eig_{d,\Sym}$ is a rational variety whenever $d$ is odd, and we now prove that this also holds when $d$ is even. The map $\phi_d$ can be factored. Let $\alpha$ be the projectivization of the linear map
\[\begin{matrix}
\C[x_0,x_1,x_2]_d &\to &(\C[x_0,x_1,x_2]_d)^{\oplus 3}\\
f&\mapsto &(x_i\de_j f-x_j\de_i f)_{i<j},
\end{matrix}\]
and call 
\begin{equation}\label{Td per esteso}
T_d= \overline {\alpha (\p(\C[x_0,x_1,x_2]_d))} \subseteq\p((\C[x_0,x_1,x_2]_d)^{\oplus 3})
\end{equation}
the closure of the image of $\alpha$. Then $T_d$ is a linear subspace of  $\p((\C[x_0,x_1,x_2]_d)^{\oplus 3})$ and there is a commutative diagram
\begin{displaymath}
    \xymatrix{
 \p(\C[x_0,x_1,x_2]_d)\ar@{-->}[rr]^{\phi_d}\ar[rd]_{\alpha} & & \Eig_{d,\Sym}\\
         & T_d\ar@{-->}[ru]_{\psi_d} &
}
\end{displaymath}
where $\psi_d(f_0,f_1,f_2)$ is the set of points defined by the ideal $(f_0,f_1,f_2)$. Proposition \ref{cor: dimension of Eig} allows us to compute the general fiber of $\psi_d$ as well.
\end{remark}

\begin{corollary}\label{cor: Eig is rational}
Let $d\in\N$. 
 Then $\psi_d: T_d\dashrightarrow\Eig_{d,\Sym}$ is birational.
\begin{proof}
Assume first that $d$ is odd. By Lemma \ref{lem: due G_i nulli}, $\alpha$ is an isomorphism. Since $\phi_d=\psi_d\circ\alpha$ is birational by Proposition \ref{pro: se d è dispari l'autoschema generico identifica f}\eqref{item: ident odd}, $\psi_d$ is birational as well.
Now assume that $d$ is even. The fibers of $\alpha$ are lines by Lemma \ref{lem: due G_i nulli}, and the general fiber of $\phi_d$ is a line by Proposition \ref{pro: se d è dispari l'autoschema generico identifica f}. This implies that $\psi_d$ is generically injective.
\end{proof}
\end{corollary}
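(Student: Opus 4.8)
The plan is to exploit the factorization $\phi_d = \psi_d \circ \alpha$ recorded in Remark \ref{def di Td} and to transfer what we already know about the fibers of $\phi_d$ (from Proposition \ref{pro: se d � dispari l'autoschema generico identifica f}) and about the map $\alpha$ (from Lemma \ref{lem: due G_i nulli}) to the map $\psi_d$. Since the kernel of the linear map underlying $\alpha$ behaves differently according to the parity of $d$, I would split the argument into the odd and the even case.

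First, suppose $d$ is odd. A form $f$ lies in the kernel of the linear map $f \mapsto (x_i\de_j f - x_j\de_i f)_{i<j}$ exactly when all the generators $x_i\de_j f - x_j\de_i f$ vanish, that is, when $\Lambda = \vu$ in the notation of Lemma \ref{lem: due G_i nulli}. By that lemma this forces $f = \lambda q^k$ with $d = 2k$, which is impossible for $d$ odd unless $f = 0$. Hence the linear map has trivial kernel, so $\alpha$ is a linear isomorphism onto $T_d$. Because $\phi_d = \psi_d \circ \alpha$ is birational by Proposition \ref{pro: se d � dispari l'autoschema generico identifica f}\eqref{item: ident odd}, the composition $\psi_d = \phi_d \circ \alpha^{-1}$ is birational as well.

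Next, suppose $d = 2k$ is even. Now Lemma \ref{lem: due G_i nulli} gives that the kernel of the same linear map is the line $\langle q^k\rangle$, so $\alpha$ is the linear projection with centre $[q^k]$, and its fibers are precisely the lines through $[q^k]$ (with that point removed). On the other hand, Proposition \ref{pro: se d � dispari l'autoschema generico identifica f} identifies the general fiber of $\phi_d$ over a point $E(f)\in\Eig_{d,\Sym}$ with $\{\lambda f + \mu q^k : \lambda\in\C^*,\ \mu\in\C\}$, i.e. with the pencil spanned by $[f]$ and $[q^k]$ minus the point $[q^k]$. The crucial observation is that this fiber is exactly one of the lines contracted by $\alpha$. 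Consequently $\alpha$ collapses each general $\phi_d$-fiber to a single point of $T_d$, and it sends distinct fibers to distinct points, since distinct lines through $[q^k]$ have distinct images under the projection. As $\psi_d \circ \alpha = \phi_d$ is dominant, $\psi_d$ is dominant, and the preceding sentence shows it is generically injective; over $\C$ this gives that $\psi_d$ is birational.

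I expect the only genuine obstacle to be the bookkeeping in the even case: one must verify that the fiber of $\phi_d$ really coincides with a fiber of $\alpha$ (both being the pencil through $[f]$ and $[q^k]$), so that $\alpha$ contracts it to a single point rather than merely to a lower-dimensional set, and that this assignment is injective on the resulting family of fibers. Both points rest on the precise fiber description in Proposition \ref{pro: se d � dispari l'autoschema generico identifica f} together with the exact computation $\ker\alpha = \langle q^k\rangle$ from Lemma \ref{lem: due G_i nulli}; once these are in place, generic injectivity and dominance yield birationality in characteristic zero.
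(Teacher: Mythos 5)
Your proposal is correct and follows essentially the same route as the paper: treat the odd and even cases separately, use Lemma \ref{lem: due G_i nulli} to identify $\ker\alpha$ (trivial for $d$ odd, the line $\langle q^k\rangle$ for $d$ even), and combine this with the fiber description of $\phi_d$ from Proposition \ref{pro: se d � dispari l'autoschema generico identifica f} via the factorization $\phi_d=\psi_d\circ\alpha$. Your even-case argument merely spells out more explicitly the step the paper leaves implicit, namely that the general $\phi_d$-fiber coincides with an $\alpha$-fiber, so that $\alpha$ contracts it to a single point and $\psi_d$ is generically injective.
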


Since every triple $(f_0,f_1,f_2)\in T_d$ arises as minors of a certain matrix, we will refer to the elements $f_0$, $f_1$ and $f_2$ as \emph{determinantal} equations of an eigenscheme. The linear space $T_d$ plays an important role in our setting. It parametrizes all determinantal bases of ideals of eigenschemes and it is a birational model for $\Eig_{d,\Sym}$. For this reason, we seek to understand it better. We start by stating a more general result on the defining equations of the eigenscheme of a partially symmetric tensor. This lemma and its proof are precisely the equivalence (i)$\Leftrightarrow$(iii) of \cite[Theorem 7.3.13]{Dolgachev}, which we will recall in its complete statement in Section \ref{sec: laguerre}. The reason why we state this equivalence separately is that it does not require the hypothesis of irreducibility which appears in \cite[Theorem 7.3.13]{Dolgachev}, and we will take advantage of this fact. Next result is related to \cite[Proposition 5.2]{ASS}.

\begin{lemma}\label{lem: i implica iii}
Let $f_0,f_1,f_2\in\C[x_0,x_1,x_2]_d$. Then 
\begin{equation}\label{relazione lineare}
x_0f_0+x_1f_1+x_2f_2=0
\end{equation}
 if and only if there exist $g_0,g_1,g_2\in\C[x_0,x_1,x_2]_{d-1}$ such that
\begin{equation}\label{eq: minors lemma}
    f_0 = x_1g_2-x_2g_1,\qquad f_1 = x_2g_0-x_0g_2,\qquad 
    f_2 = x_0g_1-x_1g_0.
\end{equation}
\end{lemma}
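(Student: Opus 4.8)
The plan is to prove the two implications separately, and to treat the easy one first. The implication from \eqref{eq: minors lemma} to \eqref{relazione lineare} is a direct substitution: plugging the three expressions into $x_0f_0+x_1f_1+x_2f_2$ produces
\[
x_0(x_1g_2-x_2g_1)+x_1(x_2g_0-x_0g_2)+x_2(x_0g_1-x_1g_0),
\]
whose six monomials cancel in pairs, giving $0$. So this direction needs no structural input at all.

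For the converse I would exploit the observation that the relation \eqref{relazione lineare} says exactly that the triple $(f_0,f_1,f_2)$ is a syzygy of the ideal $(x_0,x_1,x_2)\subseteq S=\C[x_0,x_1,x_2]$, i.e. it lies in the kernel of the map $S^{\oplus 3}\to S$ sending $(a_0,a_1,a_2)\mapsto x_0a_0+x_1a_1+x_2a_2$. Since $x_0,x_1,x_2$ form a regular sequence, the Koszul complex on them is exact, so the module of first syzygies is generated by the Koszul syzygies
\[
(x_1,-x_0,0),\qquad(x_2,0,-x_0),\qquad(0,x_2,-x_1).
\]
Hence I can write $(f_0,f_1,f_2)=a(x_1,-x_0,0)+b(x_2,0,-x_0)+c(0,x_2,-x_1)$ for suitable $a,b,c\in S$, which by comparing degrees with the $f_i$ must be homogeneous of degree $d-1$. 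Reading off coordinates gives $f_0=ax_1+bx_2$, $f_1=-ax_0+cx_2$, $f_2=-bx_0-cx_1$, and setting $g_0:=c$, $g_1:=-b$, $g_2:=a$ reproduces \eqref{eq: minors lemma} verbatim, completing the proof.

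The only real obstacle is the structural fact that every syzygy of the regular sequence $x_0,x_1,x_2$ is a combination of Koszul syzygies; this is the exactness of the Koszul complex in the relevant degree, together with routine homogeneity bookkeeping. If I wanted a fully self-contained argument avoiding the Koszul machinery, I would instead reduce \eqref{relazione lineare} modulo $x_0$: in $\C[x_1,x_2]$ the relation $x_1\bar f_1+x_2\bar f_2=0$ forces $\bar f_1=x_2\bar g_0$ and $\bar f_2=-x_1\bar g_0$ for some $\bar g_0$, using only that $x_1,x_2$ is a regular sequence in two variables (elementary in a UFD). Lifting $\bar g_0$ to $g_0\in S_{d-1}$ and writing $f_1=x_2g_0+x_0p$, $f_2=-x_1g_0+x_0r$, substitution into \eqref{relazione lineare} and cancellation of the common factor $x_0$ yields $f_0=-x_1p-x_2r$; then $g_1:=r$ and $g_2:=-p$ complete the triple. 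Either route reduces the lemma to a regular-sequence argument, and the remaining verifications are purely computational.
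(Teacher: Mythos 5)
Your proof is correct and follows essentially the same route as the paper: the relation $x_0f_0+x_1f_1+x_2f_2=0$ exhibits $(f_0,f_1,f_2)$ as a first syzygy of the regular sequence $x_0,x_1,x_2$, and exactness of the Koszul complex shows it is a combination of the Koszul syzygies, which after relabeling gives exactly \eqref{eq: minors lemma}. Your alternative elementary argument by reducing modulo $x_0$ is also valid, but the main argument coincides with the paper's.
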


\begin{proof}
If $f_0,f_1,f_2$ satisfy \eqref{eq: minors lemma}, then it is immediate to check that they satisfy \eqref{relazione lineare} as well. Conversely, assume that \eqref{relazione lineare} holds and let $S = \C[x_0,x_1,x_2]$.
The Koszul complex in the ring
$S$ is an exact sequence of $S$-modules
$$
0\to S\xrightarrow{\alpha} S^{\oplus 3} \xrightarrow{\beta} S^{\oplus 3} \xrightarrow{\gamma} S \to S/(x_0,x_1,x_2)
\to 0,
$$
where the maps are  $\alpha(h)=(h x_0,h x_1,hx_2)$, $\gamma (h_0,h_1,h_2)=h_0 x_0 + h_1 x_1 + h_2 x_2$ and $\beta$ is defined by the matrix
$$
\left(
\begin{array}{ccc}
0 & -x_2 & x_1\\
x_2 & 0 & -x_0\\
-x_1 & x_0 & 0 \\
\end{array}
\right).
$$
The syzygy $x_0f_0+x_1f_1+x_2f_2=0$ implies that $(f_0, f_1, f_2)$ is in the kernel of $\gamma$, and since the Koszul complex is exact, the triple $(f_0,f_1, f_2)$ lies in the image of $\beta$. It follows that there exist $g_0,g_1,g_2\in\C[x_0,x_1,x_2]_{d-1}$ such that \eqref{eq: minors lemma} holds.
\end{proof}

\begin{remark}\label{rmk: scelta del segno per f1}
We point out that in Lemma \ref{lem: i implica iii} we follow the choice of the signs of $f_0,f_1,f_2$ given in \cite[Theorem 7.3.13]{Dolgachev}. We shall see in Section 5 that such a choice is convenient when investigating a geometric characterization of zero-dimensional reduced eigenschemes. The opposite sign for $f_1$ gives rise to the following equivalent formulation of Lemma \ref{lem: i implica iii}:

Let $f_0,f_1,f_2\in\C[x_0,x_1,x_2]_d$.
 Then $
x_0f_0-x_1f_1+x_2f_2=0
$ if and only if there exist $g_0,g_1,g_2\in\C[x_0,x_1,x_2]_{d-1}$ such that 
$
    f_0 = x_1g_2-x_2g_1$, $f_1 = x_0g_2-x_2g_0$ and $f_2 = x_0g_1-x_1g_0
$.
\end{remark}

The choice of sign for $f_1$ illustrated in Remark \ref{rmk: scelta del segno per f1} will be functional to next result. We give necessary and sufficient conditions for a triple of polynomials to belong to the locus $T_d$ of determinantal triples of generators for symmetric tensors.

\begin{theorem}\label{thm: defining equations}Let $d\ge 2$. Three homogeneous forms $f_0,f_1,f_2\in \C[x_0,x_1,x_2]_d$ are the determinantal equations defining the eigenscheme of a form $f\in\C[x_0,x_1,x_2]_d$ if and only if 
	\begin{align}
	    &x_0f_0 - x_1 f_1 + x_2 f_2 = 0\label{eq: second}\mbox{ and }\\
	&\partial_0 f_0 - \partial_1 f_1 + \partial_2 f_2 = 0 \label{eq: first}.
	\end{align}
\end{theorem}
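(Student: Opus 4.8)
The statement characterizes the triples $(f_0,f_1,f_2)$ that arise as the determinantal equations $x_i\de_j f - x_j\de_i f$ of an eigenscheme. I need to show that the two linear conditions \eqref{eq: second} and \eqref{eq: first} are exactly equivalent to $(f_0,f_1,f_2)\in T_d$, i.e.\ to the existence of a form $f\in\C[x_0,x_1,x_2]_d$ with $f_0 = x_1\de_2 f - x_2\de_1 f$, $f_1 = x_0\de_2 f - x_2\de_0 f$, and $f_2 = x_0\de_1 f - x_1\de_0 f$ (using the sign convention of Remark \ref{rmk: scelta del segno per f1}).

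\textbf{The easy direction.} First I would verify that a genuine determinantal triple satisfies both conditions. If $f_0,f_1,f_2$ come from a form $f$, then a direct substitution shows $x_0 f_0 - x_1 f_1 + x_2 f_2 = x_0(x_1\de_2 f - x_2\de_1 f) - x_1(x_0\de_2 f - x_2\de_0 f) + x_2(x_0\de_1 f - x_1\de_0 f)$, and every term cancels in pairs, giving \eqref{eq: second}. For \eqref{eq: first}, I would compute $\de_0 f_0 - \de_1 f_1 + \de_2 f_2$; expanding each $\de_k(x_i\de_j f - x_j\de_i f)$ by the product rule and invoking Schwarz's theorem ($\de_i\de_j f = \de_j\de_i f$), the mixed second-derivative terms cancel and the Kronecker-delta terms from differentiating the variables cancel as well, yielding $0$.

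\textbf{The converse — the main work.} Assume \eqref{eq: second} and \eqref{eq: first} hold. Condition \eqref{eq: second} is precisely the hypothesis of the signed version of Lemma \ref{lem: i implica iii} (Remark \ref{rmk: scelta del segno per f1}), so I immediately obtain forms $g_0,g_1,g_2\in\C[x_0,x_1,x_2]_{d-1}$ with $f_0 = x_1 g_2 - x_2 g_1$, $f_1 = x_0 g_2 - x_2 g_0$, and $f_2 = x_0 g_1 - x_1 g_0$. The goal is now to upgrade the triple $(g_0,g_1,g_2)$ to a gradient, i.e.\ to find a single $f$ with $g_i = \de_i f$. The plan is to feed condition \eqref{eq: first} into these expressions: computing $\de_0 f_0 - \de_1 f_1 + \de_2 f_2 = 0$ in terms of the $g_i$ should collapse, after cancellation, to the relation $x_i\de_j g_k = \dots$ that forces the $g_i$ to be curl-free. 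Concretely, I expect the combination to reduce to the statement that $x_i(\de_j g_k - \de_k g_j)$ vanishes in the appropriate antisymmetric pattern, hence that $\de_i g_j = \de_j g_i$ for all $i,j$. This integrability condition is exactly the closedness of the $1$-form $\sum g_i\,dx_i$; since $\C[x_0,x_1,x_2]$ is a polynomial ring (so the relevant de Rham / Koszul cohomology vanishes) and each $g_i$ is homogeneous of degree $d-1$, Euler's identity provides an explicit potential $f = \tfrac{1}{d}(x_0 g_0 + x_1 g_1 + x_2 g_2)$, and one checks $\de_i f = g_i$ using $\de_i g_j = \de_j g_i$.

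\textbf{The expected obstacle.} The delicate point is extracting the symmetry $\de_i g_j = \de_j g_i$ cleanly from \eqref{eq: first}: the $g_i$ are not uniquely determined by the $f_i$ (Lemma \ref{lem: i implica iii} has the ambiguity $g_i \mapsto g_i + x_i h$ coming from the Koszul syzygy), so the computation of $\de_0 f_0 - \de_1 f_1 + \de_2 f_2$ will produce terms in $\de_k g_\ell$ together with undifferentiated $g_\ell$ terms that must be grouped correctly. I would organize this by writing out all nine contributions, using the product rule, and tracking which cancel identically versus which survive to give the three independent symmetry relations; the freedom $g_i \mapsto g_i + x_i h$ must be shown to be harmless, which amounts to checking that replacing $g_i$ by $g_i + x_i h$ changes neither the $f_i$ nor the conclusion $g_i = \de_i f$ (it shifts $f$ by a multiple of $q$, consistent with Lemma \ref{lem: first properties}\eqref{item: sommare la quadrica isotropa non cambia l'autoschema}). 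Once the symmetry relations are in hand, the potential-function step is routine via Euler's formula.
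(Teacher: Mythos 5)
Your easy direction and your use of Remark \ref{rmk: scelta del segno per f1} to produce $g_0,g_1,g_2$ with $f_0=x_1g_2-x_2g_1$, $f_1=x_0g_2-x_2g_0$, $f_2=x_0g_1-x_1g_0$ coincide with the paper's proof. The gap is in the next step, where you claim that substituting these expressions into \eqref{eq: first} "forces the $g_i$ to be curl-free". What the substitution actually yields (this is \eqref{eq: induction} in the paper) is the single polynomial identity
\[
x_0(\de_1g_2-\de_2g_1) - x_1(\de_0 g_2-\de_2g_0) + x_2(\de_0g_1-\de_1g_0)=0,
\]
i.e.\ the curl triple itself satisfies a relation of the shape \eqref{eq: second}. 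For $d=2$ the curl components are constants and this identity does force them to vanish, but for $d>2$ it does not — and it cannot: the Koszul ambiguity $g_i\mapsto g_i+x_ih$ changes the curl by the triple $(x_k\de_jh-x_j\de_kh)$, which is nonzero unless $h$ is a power of the isotropic conic, so most admissible choices of $(g_0,g_1,g_2)$ are \emph{not} curl-free even when the $f_i$ genuinely come from a polynomial $f$. Your "expected obstacle" paragraph correctly identifies the ambiguity but proposes to show it is "harmless"; in fact it must be exploited constructively, and that is the missing idea.

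The paper's resolution is an induction on $d$: the curl triple $(\de_1g_2-\de_2g_1,\de_0g_2-\de_2g_0,\de_0g_1-\de_1g_0)$ has degree $d-2$ and satisfies both \eqref{eq: second} and \eqref{eq: first}, so by the induction hypothesis it is itself the determinantal triple of some $h\in\C[x_0,x_1,x_2]_{d-2}$, which is exactly \eqref{eq: curl}. Replacing $(g_0,g_1,g_2)$ by $(g_0+x_0h,g_1+x_1h,g_2+x_2h)$ leaves the $f_i$ unchanged, and this corrected triple \emph{is} curl-free, hence a gradient $\nabla f$ (at that point your Euler-formula construction of the potential is fine). Without the inductive production of $h$ and the subsequent correction of the $g_i$, your argument establishes the theorem only in the base case $d=2$.
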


\begin{proof}
It is immediate to check that conditions \eqref{eq: second} and \eqref{eq: first} are necessary. On the other hand, if they are satisfied then Remark \ref{rmk: scelta del segno per f1} implies that there exist $g_0,g_1,g_2\in \C[x_0,x_1,x_2]_{d-1}$ such that 
	\begin{equation}\label{eq: g and h}
		f_0 = x_1g_2-x_2g_1, \qquad f_1 = x_0g_2-x_2g_0, \qquad f_2 = x_0g_1-x_1g_0.
	\end{equation}
We will prove that there exists $f\in\C[x_0,x_1,x_2]_d$ such that $x_ig_j-x_jg_i=x_i\de_jf-x_j\de_if$ for every $0\leq i<j\leq 2$. If we substitute \eqref{eq: g and h} in \eqref{eq: first} we obtain
	\begin{align}
	0 & =  \partial_0 f_0 - \partial_1 f_1 + \partial_2 f_2 \nonumber\\
	& = x_1\partial_0 g_2  - x_2\de_0g_1 - x_0\de_1g_2+x_2\de_1g_0+x_0\de_2g_1-x_1\de_2g_0\nonumber\\
	&  =  -(x_0(\de_1g_2-\de_2g_1) - x_1(\de_0 g_2-\de_2g_0) + x_2(\de_0g_1-\de_1g_0))\label{eq: induction}.
	\end{align}
We proceed by induction on $d$. If $d=2$, then $\de_i g_j-\de_j g_i$ are constants and so \eqref{eq: induction} implies that $\de_i g_j-\de_j g_i=0$ for every $i<j$. But $(\de_1g_2-\de_2g_1,\de_0 g_2-\de_2g_0,\de_0g_1-\de_1g_0)$ is the curl of the vector field $(g_0,-g_1,g_2)$, defined everywhere in $\C^3$. Hence the field is conservative, which means that there exists $f\in \C[x_0,x_1,x_2]_2$ such that $g_i=\de_if$ for every $i\in\{0,1,2\}$.

If $d>2$, then \eqref{eq: induction} does not immediately imply that $\de_i g_j-\de_j g_i=0$. However, \eqref{eq: induction} shows that the triple $(\de_1g_2-\de_2g_1,\de_0 g_2-\de_2g_0,\de_0g_1-\de_1g_0)$ satisfies \eqref{eq: second}. It is easy to see that it also satisfies \eqref{eq: first}, so by induction hypothesis we know there exists $h\in \C[x_0,x_1,x_2]_{d-2}$ such that 
	\begin{align}\label{eq: curl}
		\de_1g_2-\de_2g_1 - x_1\de_2h +x_2\de_1h&=0,\nonumber\\
		\de_0 g_2-\de_2g_0 - x_0\de_2h + x_2\de_0h&=0, \\	\de_0g_1-\de_1g_0 - x_0\de_1h+x_1\de_0h&=0.\nonumber
	\end{align}
Observe that $(g_0,g_1,g_2)$ is not the unique triple in $\C[x_0,x_1,x_2]_{d-1}$ satisfying \eqref{eq: second}. If we let $k$ be the third index, then the triple $(g_0+x_0h,g_1+x_1h,g_2+x_2h)$ satisfies \begin{equation}\label{eq: quella con g_k}
	x_i(g_j+x_jh)-x_j(g_i+x_ih)=x_ig_j-x_jg_i=f_k.
	\end{equation}
	Again we observe that the left hand sides of equations \eqref{eq: curl} are precisely the curl of the vector field $(g_0+x_0h,-g_1-x_1h,g_2+x_2h)$. Hence
	there exists a polynomial $f\in\C[x_0,x_1,x_2]_d$, such that
	\begin{equation}\label{come trovare gradiente}
		\de_0 f = g_0+x_0h,\qquad \de_1 f = g_1+x_1h, \qquad \de_2 f = g_2+x_2h.
	\end{equation}
By substituting these expressions in the left hand side of \eqref{eq: quella con g_k} we deduce that
\[
	f_0 = x_1\de_2 f -x_2\de_1f, 
\qquad f_1=x_0\de_2f-x_2\de_0f, 
\qquad f_2=x_0\de_1f- x_1\de_0f.
	\]
\end{proof}

\begin{remark}
    We point out that Theorem \ref{thm: defining equations} can be also proved by means of computer algebra. Indeed, if we consider the Weyl algebra $\C[x_0,x_1,x_2]\langle \de_0,\de_1,\de_2\rangle$ and the map
    \[
    \varphi: D^{\oplus 3} \xrightarrow
        {\left(
        \begin{array}{ccc}
       x_0 & -x_1 & x_2 \\
        \partial_0 & -\partial_1 & \partial_2 \\
        \end{array}
        \right)
        }
     D^{\oplus 2},
    \]
then the statement of Theorem \ref{thm: defining equations} means that the kernel of $\varphi$ equals the image of the map $D\to D^{\oplus 3}$ given by $f\to (x_1\de_2f-x_2\de_1f,x_0\de_2f-x_2\de_0f,x_0\de_1f-x_1\de_0f)$. One way to do so is by using the package $D$-modules of the software Macaulay2 \cite{Dmodules}.
\end{remark}

The choice of sign we underline in Remark \ref{rmk: scelta del segno per f1} and we employ in Theorem \ref{thm: defining equations} appears to be the right one in order to generalize equations \eqref{eq: second} and \eqref{eq: first} to eigenschemes of symmetric tensors in $\C[x_0,\dots,x_n]_d$. Indeed, we formulate the following conjecture.

\begin{conjecture}
Let $n\geq 2$. A set $\{f_{ij}\mid0\leq i<j\leq n\}\subseteq \C[x_0,\dots,x_n]_d$ of $\binom{n+1}{2}$ homogeneous polynomials is the set of determinantal equations of the eigenscheme $E(f)$ of some $f\in\C[x_0,\dots,x_n]_d$ if and only if 
	\begin{align*}
x_if_{jk} - x_j f_{ik} + x_k f_{ij} = 0	    \mbox{ and }
\partial_i f_{jk} - \partial_j f_{ik} + \partial_k f_{ij} = 0 
	\end{align*}
	for every $0\leq i<j<k\leq n$.
\end{conjecture}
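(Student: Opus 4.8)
The plan is to recast the two conditions in the language of polynomial differential forms, where the statement becomes a small piece of de Rham theory. Let $\mathrm d$ denote the exterior derivative, set $\theta=\sum_i x_i\,\mathrm dx_i=\tfrac12\,\mathrm dq$ with $q=x_0^2+\dots+x_n^2$, and encode the data as the $2$-form $F=\sum_{i<j}f_{ij}\,\mathrm dx_i\wedge\mathrm dx_j$. A direct expansion identifies the first condition $x_if_{jk}-x_jf_{ik}+x_kf_{ij}=0$ with $\theta\wedge F=0$, and the second condition $\de_if_{jk}-\de_jf_{ik}+\de_kf_{ij}=0$ with $\mathrm dF=0$; moreover $f_{ij}=x_i\de_jf-x_j\de_if$ says exactly $F=\theta\wedge\mathrm df$. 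Since $\mathrm d\theta=0$, the operator $L:=\theta\wedge(-)$ and $\mathrm d$ anticommute and both square to zero, so necessity is immediate: $\theta\wedge(\theta\wedge\mathrm df)=0$ and $\mathrm d(\theta\wedge\mathrm df)=-\theta\wedge\mathrm d\mathrm df=0$.

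For sufficiency I would reduce the $2$-form statement to a statement about $1$-forms. Because $x_0,\dots,x_n$ is a regular sequence, the complex $(\bigwedge^\bullet,\,L)$ is exact at every $\wedge^p$ with $1\le p\le n$; for $n\ge 2$ this includes $p=2$, which is precisely the range the hypothesis $n\ge 2$ guarantees. Hence $\theta\wedge F=0$ yields $F=\theta\wedge G$ for a homogeneous $1$-form $G=\sum_i g_i\,\mathrm dx_i$ with $g_i$ of degree $d-1$; this is the several–variable analogue of Lemma \ref{lem: i implica iii}. Plugging this into $0=\mathrm dF=-\theta\wedge\mathrm dG$ shows $\theta\wedge\mathrm dG=0$. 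It then suffices to prove that any such $G$ can be written as $\mathrm df+p\,\theta$, since then $F=\theta\wedge G=\theta\wedge\mathrm df$ and the $f_{ij}$ are the claimed minors.

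The crux is therefore the following claim on $1$-forms, which I would prove by induction on the degree using the Cartan formula for the Euler field $E=\sum_i x_i\de_i$: if a homogeneous $G$ satisfies $\theta\wedge\mathrm dG=0$, then $G=\mathrm df+p\,\theta$ for suitable polynomials $f,p$. Exactness of $L$ at $\wedge^2$ gives $\mathrm dG=\theta\wedge B$ for a $1$-form $B$ of strictly smaller degree; applying $\mathrm d$ and using $\mathrm d\theta=0$ shows $\theta\wedge\mathrm dB=0$, so $B$ satisfies the same hypothesis and by induction $B=\mathrm d\phi+\psi\,\theta$. Homogeneity gives $\mathcal L_E G=\mathrm d(\iota_E G)+\iota_E(\mathrm dG)$ equal to the degree times $G$, and expanding $\iota_E(\theta\wedge B)=qB-(\iota_E B)\,\theta$ produces an explicit expression for $G$ whose only term not visibly of the form $\mathrm df+p\,\theta$ is a multiple $qB$. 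Substituting $B=\mathrm d\phi+\psi\,\theta$ and using $q\,\mathrm d\phi=\mathrm d(q\phi)-2\phi\,\theta$ absorbs this last term, closing the induction; the base case is when $B$ has negative degree, forcing $B=0$ and $\mathrm dG=0$, whence $G$ is exact by the same Cartan identity. This is the conceptual version of the hands-on inductive computation performed for $n=2$ in Theorem \ref{thm: defining equations}.

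The main obstacle, and the reason a naive argument fails, is exactly this correction step. The $1$-form $G$ extracted from the first reduction is in general not closed, hence not a gradient; it becomes a gradient only after subtracting a suitable multiple of $\theta=\tfrac12\,\mathrm dq$, and pinning down that multiple is what forces the descent on the degree. This is also the structural reason why the isotropic quadric $q$ governs the whole picture, in line with Lemma \ref{lem: first properties}\eqref{item: sommare la quadrica isotropa non cambia l'autoschema} and Corollary \ref{cor: dimension of Eig}. The only arithmetic caveat is the division by the positive degree in the Cartan homotopy, which is harmless over $\C$.
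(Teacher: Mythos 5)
This statement is a \emph{conjecture} in the paper: the authors prove only the case $n=2$ (Theorem \ref{thm: defining equations}), so there is no proof of the general statement for you to be compared against. Your argument, as far as I can check, is correct and would settle the conjecture. The dictionary is exact: with $F=\sum_{i<j}f_{ij}\,\mathrm{d}x_i\wedge \mathrm{d}x_j$ and $\theta=\sum_i x_i\,\mathrm{d}x_i$, the coefficient of $\mathrm{d}x_i\wedge\mathrm{d}x_j\wedge\mathrm{d}x_k$ in $\theta\wedge F$ is $x_if_{jk}-x_jf_{ik}+x_kf_{ij}$ and in $\mathrm{d}F$ is $\de_if_{jk}-\de_jf_{ik}+\de_kf_{ij}$, while $f_{ij}=x_i\de_jf-x_j\de_if$ is exactly $F=\theta\wedge\mathrm{d}f$ (the alternating signs in the conjecture match this convention directly, with no analogue of the sign twist of Remark \ref{rmk: scelta del segno per f1} needed). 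Necessity is then trivial. For sufficiency, Koszul exactness at $\bigwedge^2$ — available precisely because $n\ge 2$ — is the right generalization of Lemma \ref{lem: i implica iii}, and graded exactness lets you take $G$ and later $B$ homogeneous. I checked the inductive step in detail: $\mathrm{d}G=\theta\wedge B$ with $\deg B=\deg G-2$; the Cartan identity gives $(\deg G+1)G=\mathrm{d}(\iota_EG)+qB-(\iota_EB)\,\theta$; and after substituting $B=\mathrm{d}\phi+\psi\theta$ the troublesome term is absorbed via $qB=\mathrm{d}(q\phi)-2\phi\theta+q\psi\theta$. All divisions are by positive integers, and the base case ($B=0$, so $G$ closed, hence exact) is sound. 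A dimension count in low degree for $n=2$ confirms that the solution space of $\theta\wedge\mathrm{d}G=0$ coincides with $\mathrm{d}S+S\theta$, as your claim predicts.

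It is worth recording how this relates to the paper's $n=2$ argument, of which yours is the coordinate-free generalization. In the proof of Theorem \ref{thm: defining equations}, the "curl" triple $(\de_1g_2-\de_2g_1,\,\de_0g_2-\de_2g_0,\,\de_0g_1-\de_1g_0)$ is exactly $\mathrm{d}G$ rewritten as a new triple of degree $d-2$, and the induction on $d$ feeds this triple back into the statement itself; this trick depends on the identification $\bigwedge^2\C^3\cong\C^3$, which is special to $n=2$ and does not generalize. Your restructuring — separating the Koszul step ($\mathrm{d}G=\theta\wedge B$) from the homotopy step (the Euler/Cartan identity) — removes that dependence and is what makes the argument work for all $n$. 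The role of $q$ as the unique source of non-uniqueness ($G$ determined only up to $p\,\theta=\tfrac12 p\,\mathrm{d}q$) also recovers Lemma \ref{lem: dim of H} conceptually. I would encourage you to write this up in full; the only points needing care in a complete write-up are stating the graded Koszul exactness precisely and tracking the homogeneity degrees through the induction.
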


\begin{remark}
We can regard equalities \eqref{eq: second} and \eqref{eq: first} as linear conditions on the coefficients of $f_0$, $f_1$ and $f_2$. In other words, the two equations determine the linear equations defining $T_d$ in $\p((\C[x_0,x_1,x_2]_d)^{\oplus 3})$. Let us analyse them. Equation \eqref{eq: second} is the condition on a degree $d+1$ polynomial to be identically zero, so it corresponds to imposing $\binom{d+3}{2}$ linear conditions on the coefficients of $f_0$, $f_1$ and $f_2$. Equation \eqref{eq: first} imposes $\binom{d+1}{2}$ more conditions. If $d$ is even, then $$\dim T_d=\binom{d+2}{2}-2=\dim\p((\C[x_0,x_1,x_2]_d)^{\oplus 3})-\binom{d+3}{2}-\binom{d+1}{2},$$
so \eqref{eq: second} and \eqref{eq: first} give independent conditions. On the other hand, when $d$ is odd we have
$$\dim T_d=\binom{d+2}{2}-1=\dim\p((\C[x_0,x_1,x_2]_d)^{\oplus 3})-\binom{d+3}{2}-\binom{d+1}{2}+1,$$
so \eqref{eq: second} and \eqref{eq: first} give one condition less than expected. Let us observe that in the case $d=3$ some equivalent equations are given in \cite[Proposition 5.3]{ASS}.
\end{remark}

\begin{remark}
As observed in \cite[Section 5]{ASS}, Lemma \ref{lem: i implica iii} can be translated into an algorithm testing whether a given configuration of $d^2-d+1$ points in $\p^2$ is the eigenscheme of some tensor $T$, and reconstructing $T$ from its eigenpoints. We follow Remark \ref{rmk: scelta del segno per f1} for the choice of the sign of $f_1$. Given a set of points $Z\subseteq\p^2$, we proceed as follows.
\begin{itemize}
\item Determine a minimal set of generators of $I_Z$; if $I_Z$ is not generated by three polynomials of degree $d$, then $Z$ is not an eigenscheme.
\item If $I_Z$ has three generators $h_0,h_1,h_2$ of degree $d$, we look at the Hilbert-Burch matrix; if it has not the form
    ${\scriptsize\begin{pmatrix}
	l_0 & p_0 \\
	l_1 & p_1 \\
	l_2 & p_2
	\end{pmatrix}}$ with $l_0,l_1,l_2$
    linearly independent linear forms, then $Z$ is not an eigenscheme by \cite[Proposition 5.2]{ASS}.
\item If $l_0,l_1,l_2$ are linearly independent, set $A\in \GL_3(\C)$ to be the matrix such that 
$(l_0,l_1,l_2)^\top=A\cdot(x_0,x_1,x_2)^\top$.

If we set $(f_0,f_1,f_2)^\top=A\cdot (h_0,h_1,h_2)^\top$,
then $f_0,f_1,f_2$ satisfy equation \eqref{eq: second} by construction, and therefore $Z$ is an eigenscheme.
\item A partially symmetric tensor $T$ with $E(T)=Z$ is given by 
the triple of forms  $(g_0,g_1,g_2)^\top=A^{-1}\cdot (p_0.p_1,p_2)^\top$.
\end{itemize}
If such algoritm gives a positive answer, then $Z$ is the eigenscheme of a partially symmetric tensor and $f_0,f_1,f_2$ are its determinantal equations. Then we can employ Theorem \ref{thm: defining equations} to check if there exists a polynomial $f$ such that $Z=E(f)$. By Corollary \ref{cor: Eig is rational}, the determinantal equations of the eigenscheme of a polynomial are unique. We proceed as follows. 
\begin{itemize}
    \item 
Check if $f_0,f_1,f_2$ satisfy
also
\eqref{eq: first}; if not, then $Z$ is not the eigenscheme of a symmetric tensor.
\item If $f_0,f_1,f_2$ satisfy
\eqref{eq: first}, then there exists a polynomial $h\in \C [x_0,x_1,x_2]_{d-2}$ such that the equations \eqref{come trovare gradiente} hold. We conclude by solving the affine linear system \eqref{come trovare gradiente} in the coefficients of $f$ and $h$. If $d$ is odd, the solution is unique up to a proportionality factor. If $d$ is even, then there are infinite many possibility for $h$. Two non proportional solutions give rise to two polynomials differing by a multiple of a power of the isotropic conic.
\end{itemize}
\end{remark}

In the last part of this section we briefly schematise the two algorithms. As computing the minimal resolution of an ideal requires exact arithmetic, in the effective implementation we work over $\Q$. 
\begin{algorithm}[H]
	\textbf{Input:} $d\geq 2$, a set $Z$ of $d^2-d+1$ rational points in $\mathbb{P}^2$.\\
	$I =\bigcap_{p\in Z} I_p$, with $I_p\subseteq \mathbb{Q}[x_0,x_1,x_2]= S$ the homogeneous prime ideal defining $p$. \\
	\algorithmicif{ $I$ is not generated by three forms of degree $d$}:
	\text{\hspace{20pt}}\\
	\text{\hspace{20pt}}\algorithmicreturn{ False}\\
	\textbf{else} compute the minimal resolution of $I$: 
	\[
	0\to S(-d-1)\oplus S(-2d+1)\overset{{\scriptsize\begin{pmatrix}
	l_0 & p_0 \\
	l_1 & p_1 \\
	l_2 & p_2
	\end{pmatrix}}}{\longrightarrow} S(-d) \to I \to 0
	\]
	\text{\hspace{20pt}}\algorithmicif{ there exists $A\in\GL_3(\mathbb{Q})$: $(l_0,l_1,l_2)^\top= A\cdot (x_0,x_1,x_2)^\top$: \\ \text{\hspace{20pt}}\text{\hspace{20pt}}\algorithmicreturn{  $\left( \text{True}, A^{-1}\cdot(p_0,p_1,p_2)^\top\right)$}}\\
	\text{\hspace{20pt}}\textbf{else }\\
	\text{\hspace{20pt}}\text{\hspace{20pt}}\algorithmicreturn{   False }
	\vspace{5pt}
	\caption{Test if points are the eigenscheme of a tensor in $(\Sym^{d-1}\Q^{3})^{\oplus 3 }$}
	\label{alg: algorithm}
\end{algorithm}

The second algorithm tests if a configuration of points is in particular the eigenscheme of a symmetric tensor $f\in \mathbb{Q}[x_0,x_1,x_2]$. If so, it returns such a polynomial $f$. Observe that if $d$ is even, Proposition \ref{pro: se d è dispari l'autoschema generico identifica f} established that $f$ is only unique up to summing the $\frac{d}{2}$-th power of the isotropic quadric.

\begin{algorithm}[H]
	\textbf{Input:} $d\geq 2$, a set $Z$ of $d^2-d+1$ rational points in $\mathbb{P}^2$.\\
	\algorithmicif{ Algorithm \ref{alg: algorithm} returns False:}\\ 
	\text{\hspace{20pt}}\algorithmicreturn{ False} \\
	\textbf{else} let $(g_0,g_1,g_2)\in (\Sym^{d-1}\Q^{3})^{\oplus 3 }$ be the output of Algorithm \ref{alg: algorithm}.\\
	\text{\hspace{20pt}} $f_0= x_1g_2-x_2g_1$, $f_1=x_0g_2-x_2g_1$, $f_2=x_0g_1-x_1g_0$.\\
	\text{\hspace{20pt}}\algorithmicif{ $\partial_0 f_0 - \partial_1 f_1+\partial_2 f_2 \neq 0$:}\\
	\text{\hspace{20pt}}\text{\hspace{20pt}}\algorithmicreturn{ False}\\
	\text{\hspace{20pt}}\textbf{else} let $f\in \mathbb{Q}[x_0,x_1,x_2]_d$ and $h\in\mathbb{Q}[x_0,x_1,x_2]$ with unknown coefficients.\\
	\text{\hspace{20pt}}\text{\hspace{20pt}}Solve the affine linear system \eqref{come trovare gradiente} in the coefficients of $f$ and $h$.\\
	\text{\hspace{20pt}}\text{\hspace{20pt}}\algorithmicreturn{ $\left( \text{True}, f\right)$}
	\caption{Test if points are the eigenscheme of a tensor in $\mathbb{Q}[x_0,x_1,x_2]_d$}
	\label{alg: algorithm2}
\end{algorithm}

We have effectively implemented these two algorithms in Macaulay2. The code and a few examples of its usage can be found in the following repository:

\noindent \href{https://github.com/LorenzoVenturello/Geometry-of-ternary-tensor-eigenschemes}{https://github.com/LorenzoVenturello/Geometry-of-ternary-tensor-eigenschemes}.

\section{Eigenschemes as Bateman configurations}\label{sec: bateman}

In this section we will focus on the case $(n,d)=(2,3)$
. By Lemma \ref{lem:nonempty}, we know that there are $7$ eigenpoints. In \cite[Theorem 5.1]{ASS}, the authors prove that a configuration of seven points in $\p^2$ is the eigenscheme of a $3\times 3\times 3$ tensor if and only if no six of the seven points lie on a conic. 
As a consequence, the general set of 7 points in $\p^2$ is the eigenscheme of a tensor.

For symmetric tensors, this is no longer true. By Proposition \ref{pro: se d è dispari l'autoschema generico identifica f}, the eigenvariety $\Eig_{3,\Sym}\subseteq(\p^2)^{(7)}$ has dimension 9, so the general set of 7 points in $\p^2$ is not the eigenscheme of a plane cubic curve. Therefore, it is legitimate to wonder whether eigenpoints of ternary cubics are in special position. 
In many examples, we noticed that the eigenscheme of a cubic curve contains three collinear points. While investigating this phenomenon, we realized that there are remarkable connections between eigenschemes and several classical topics. The aim of this section is to underline these links, as well as to prove that the eigenpoints of the general plane cubic are in general position.

Configurations of $7$ points in $\p^2$ have always attracted the interest of algebraic geometers, see for instance \cite[Section 6.3.3]{Dolgachev}. Among them, there is an important class of configurations, first defined in \cite{Bateman} and discussed in \cite[Section 9]{OS1}.
\begin{definition}
Let $f\in\C[x_0,x_1,x_2]_3$ and let $g\in\C[x_0,x_1,x_2]_2$ be a smooth conic. We set $Z(g,f)\subseteq\p^2$ to be the subscheme defined by the minors of
    \[\begin{pmatrix}
    \de_0g & \de_1 g & \de_2g \\
    \de_0f & \de_1f & \de_2f
    \end{pmatrix}.\]
When $f$ is general, $Z(g,f)$ consists of $7$ reduced points, as shown in \cite[Lemma 9.1]{OS1}. In this case we call $Z(g,f)$ the \emph{Bateman configuration} associated to $g$ and $f$.
\end{definition}
If we consider the isotropic quadric $q=x_0^2+x_1^2+x_2^2$, then $Z(q,f)=E(f)$, so the eigenscheme of a general ternary cubic is a Bateman configuration. Thanks to \cite[Lemma 9.1]{OS1}, we recover the fact that no six of the seven points are contained in a conic. Whenever we have a set $Z\subseteq\p^2$ with these properties, it is possible to define a rational map associated to $Z$.

\begin{definition}
    Let $Z\subseteq\p^2$ be a set of seven points. If no six points of $Z$ are contained on a conic, then it is easy to see that the complex vector space $I_Z(3)$ has dimension $3$ and
    the base locus of the linear system $ \p(I_Z(3))$ is exactly $Z$. In this case, the \emph{Geiser map} associated to $Z$ is the rational map $\gamma_Z:\p^2\dashrightarrow\p^2$ defined by the linear system $\p(I_Z(3))$. By blowing-up the plane $\p^2$ along $Z$ we get a generically finite morphism 
    $$
    \widetilde \gamma _Z : \Bl_Z \p^2 \to \p^2.
    $$
\end{definition}

Geiser maps are a classical topic and several of their properties are understood. As an example, $\gamma_Z$ is generically finite of degree $2$. The ramification locus of $\widetilde \gamma _Z$ is given by the {\it Jacobian locus} $\Sigma$ defined by the determinant of the Jacobian $\text{Jac}(I_Z)$, that is the locus of singular points of the net (see \cite[Book I, Chapter IX, Theorem 25] {Cool}).

When $Z$ is general, $\Sigma$ is a curve of degree $6$ which is singular at $Z$ as illustrated in \cite[Book I, Chapter IX, Theorem 27] {Cool}. We define $B(Z)$ to be the branch locus of $\widetilde\gamma_Z$, that is the direct image of $\Sigma$. For modern references, see for instance \cite[Section 8.7.2]{Dolgachev} and \cite[Section 7]{OS1}, where it is proven that a general Geiser map is branched along a smooth L\"uroth quartic.

\begin{lemma}    \label{lem: branched is smooth iff no collinear} Let $Z\subseteq\p^2$ be a set of seven distinct points such that
no six points lie on a conic, and let $B(Z)\subseteq\p^2$ be the associated branch locus. If $B(Z)$ is a smooth curve
of degree four, then $Z$ contains no three points on a line.\end{lemma}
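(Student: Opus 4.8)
The plan is to argue by contraposition: assuming that three of the seven points of $Z$ are collinear, I would show that the branch quartic $B(Z)$ is forced to be singular, hence not a smooth curve of degree four. So suppose $P_1,P_2,P_3\in Z$ lie on a line $\ell$, and set $X=\Bl_Z\p^2$, with $E_1,\dots,E_7$ the exceptional divisors over the seven points and $H$ the pullback of a line.

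The first step is to observe that $\gamma_Z$ contracts $\ell$ to a point. Every cubic in the net $\p(I_Z(3))$ passes through $P_1,P_2,P_3$, and since a cubic meets $\ell$ in a divisor of degree three, its restriction to $\ell\cong\p^1$ is forced to be the fixed divisor $P_1+P_2+P_3$ up to scalar. Thus all members of the net cut the same divisor on $\ell$, so $\gamma_Z(\ell)$ is a single point $p_0\in\p^2$. Intersection-theoretically this is visible on the blow-up: the strict transform $C=H-E_1-E_2-E_3$ of $\ell$ satisfies
\[
C^2=-2,\qquad C\cdot(-K_X)=C\cdot(3H-E_1-\cdots-E_7)=0,
\]
so $C$ is a smooth rational $(-2)$-curve orthogonal to $-K_X$, and the anticanonical morphism $\widetilde\gamma_Z$ contracts it to $p_0$.

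The second step identifies the local structure of the branch curve at $p_0$. Since $K_X^2=2$ and $-K_X$ is nef and big, $X$ is a weak del Pezzo surface of degree two and $\widetilde\gamma_Z$ is its (base-point-free) anticanonical morphism. Its Stein factorization $X\to\bar X\xrightarrow{\bar\pi}\p^2$ contracts the $(-2)$-curve $C$ to an $A_1$ (ordinary double) point $\bar p_0$ of the anticanonical model $\bar X$, while $\bar\pi$ is a finite double cover of $\p^2$ branched exactly along $B(Z)$. Locally $\bar\pi$ has the form $w^2=b(u,v)$ with $b$ a local equation of $B(Z)$, and under such a double cover the rational double points of the total space correspond precisely to the singularities of the branch curve of the same ADE type. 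The $A_1$ point $\bar p_0$ lying over $p_0$ therefore forces $B(Z)$ to have a node at $p_0$, contradicting the assumption that $B(Z)$ is smooth. Hence no three points of $Z$ can be collinear.

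The step I expect to be the crux is this last one: the dictionary matching the $A_1$ singularity of the double cover $\bar X$ with a node of the branch quartic $B(Z)$, together with the fact that $|-K_X|$ is base-point-free so that $\widetilde\gamma_Z$ is an honest morphism contracting exactly the curves orthogonal to $-K_X$. Both facts are standard in the theory of weak del Pezzo surfaces and their anticanonical models — the degree-two case being precisely the realization of quartic curves with ADE singularities as branch loci — and can be quoted from the references on del Pezzo surfaces already in use (e.g.\ Dolgachev). An alternative, more computational route that avoids the singularity dictionary is to note that contracting $\ell$ forces $\ell$ to be a component of the Jacobian sextic $\Sigma$, write $\Sigma=\ell\cup\Sigma'$, and analyze the image of $\Sigma'$ near $p_0$ directly; but the del Pezzo argument is cleaner and conceptually more transparent.
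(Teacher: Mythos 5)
Your proof is correct, but it takes a genuinely different route from the paper's. The paper blows up six of the seven points to realize a (possibly singular) cubic surface $S\subseteq\p^3$ and projects from the image of the seventh point; in the first case it quotes the classical fact that the branch curve of such a projection is a L\"uroth quartic, singular exactly when the centre of projection lies on one of the $27$ lines of $S$, and then checks that the only lines not already excluded by the hypotheses are those joining two of the six chosen points, so that a singular branch quartic is equivalent to a collinear triple; a separate argument with first polars on a singular cubic surface handles the degenerate situation in which every six-point subset of $Z$ contains a collinear triple. You instead blow up all seven points at once and invoke the theory of weak del Pezzo surfaces of degree two: a collinear triple gives a $(-2)$-curve orthogonal to $-K_X$, the anticanonical morphism contracts it to a rational double point of the anticanonical model, and the ADE dictionary for double planes $w^2=b(u,v)$ transfers that singularity to the branch quartic. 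Your approach avoids the case split entirely, is more uniform, and in fact yields the full equivalence (smooth branch quartic if and only if no collinear triple) from the same machinery; what it costs is reliance on the weak del Pezzo/anticanonical-model formalism in place of the classical cubic-surface picture the paper leans on.

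Two points you should make explicit. First, nefness of $-K_X$ is not automatic: you need to observe that four or more collinear points are excluded by the hypothesis, because a line through four of the points together with a line through two of the remaining ones is a (degenerate) conic containing six points of $Z$; this also presumes the standing convention that ``conic'' includes reducible conics, which the paper's own proof uses implicitly as well. Second, if the contracted $(-2)$-curve meets other $(-2)$-curves, the resulting singularity of the anticanonical model may be of type $A_n$ or $D_n$ rather than $A_1$; this does not affect your conclusion, since any rational double point of the double cover still forces a singular point of the branch curve, but the claim that one always gets an ordinary node is slightly too strong as stated.
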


\begin{proof}
For a subset $Y\subseteq Z$ of six points, we denote by $\pi:\Bl_{Y}\p^2\dashrightarrow \p^2$ the projection from the seventh point. There are two possible cases.

Assume that there is a subset $Y$ of six points of $Z$, not containing any collinear triple. Since they do not lie on a conic, $\Bl_{Y}\p^2$ is 
isomorphic to a smooth cubic surface $S$ in $\p^3$. 
It is known (see \cite[Section 3]{OS2}) that the ramification curve for the projection $\pi$ is a quartic L\"uroth curve, which is singular if and only if the seventh point lies on one of the $27$ lines of $S$.
Now recall that such lines correspond to the six exceptional divisors of $\Bl_Y\p^2$, the six conics passing through five points of $Y$, and the $15$ lines joining two points
of $Y$. The exceptional divisors are excluded in our case, because the seven points are distinct, and the conics are excluded by hypothesis. Therefore $B(Z)$ is singular if and only if the seventh point is collinear with two other points.

Assume now that for any choice of six points of $Z$ there is always a collinear triple; then it is simple to check that we have at least three alignments. The blow-up of $\p^2$ in six points of $Z$ is isomorphic
to a singular irreducible cubic surface $S$ in $\p^3$. As before, denote by $\pi: S \dasharrow \p^2$ the projection from the image $A$ of the seventh point. The ramification curve $R_\pi$ is given by the points of tangency of the tangent lines to $S$ passing through $A$. The latter are the intersection of $S$ with the first polar $P_A$ of $S$ with respect to $A$, hence $R_\pi$ has degree $6$. Since $P_A$ intersects
$S$ tangentially in $A$, the curve $R_\pi$ is singular in $A$. Moreover, since any first polar of a hypersurface contains its singular locus, the ramification curve $R_\pi$ has at least one singular point distinct from $A$ by construction. It may happen that $S$ contains the line through $A$ and some singular point. In any case,
the image of
$R_\pi$ under the projection $\pi$ is either a singular curve of degree $4$, or a plane curve of degree $3$ or less, or a finite set points, which contradicts the hypothesis on $B(Z)$.
\end{proof}

\begin{remark}
In \cite[Proposition 7.1]{OS1}, the authors state that $B(Z)$ is smooth whenever $Z$ contains no six points on a conic. This is true only with the further assumption that no three points of $Z$ are collinear.
\end{remark}

\begin{proposition}
\label{pro: plane cubics, general position} If $f\in\C[x_0,x_1,x_2]_3$ is general, then $E(f)$ contains no 3 points on a line and no 6 points on a conic.
\end{proposition}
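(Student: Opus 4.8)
The plan is to handle the two assertions by different means: the statement about conics is built into the Bateman structure of $E(f)$, while the statement about lines will be extracted from Lemma~\ref{lem: branched is smooth iff no collinear}. Throughout, let $q=x_0^2+x_1^2+x_2^2$ denote the isotropic conic, so that $E(f)=Z(q,f)$.

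First I would observe that, for general $f$, the eigenscheme $E(f)$ is a Bateman configuration: by Lemma~\ref{lem:nonempty} it is reduced of degree seven, i.e.\ seven distinct points, and by \cite[Lemma 9.1]{OS1} no six of them lie on a conic. This already proves the second assertion, and it places us exactly in the hypotheses under which the Geiser map $\gamma_{E(f)}$ and its branch locus $B(E(f))$ are defined. For the first assertion I would invoke Lemma~\ref{lem: branched is smooth iff no collinear}: once $B(E(f))$ is known to be a smooth quartic, no three of the seven eigenpoints can be collinear, which is precisely what we want.

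The core of the argument is therefore to show that $B(E(f))$ is smooth for general $f$. Here the strategy is semicontinuity. Smoothness of the branch quartic is an open condition on $f$, imposed on the nonempty open locus of cubics for which $E(f)$ consists of seven distinct points with no six on a conic; since $\p(\C[x_0,x_1,x_2]_3)$ is irreducible, it suffices to produce a single cubic $f_0$ for which the associated Geiser map is branched along a smooth L\"uroth quartic. That such maps exist is the classical fact recalled from \cite[Section 7]{OS1} and \cite[Section 8.7.2]{Dolgachev}; one realizes it concretely by computing the branch quartic of an explicit Bateman configuration $E(f_0)$ (for instance with Macaulay2) and verifying its smoothness. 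By openness, $B(E(f))$ is then smooth for general $f$, and Lemma~\ref{lem: branched is smooth iff no collinear} finishes the proof.

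I expect this last smoothness step to be the main obstacle. One cannot simply quote the genericity of smooth branch loci over arbitrary seven-point configurations, because the eigenschemes $E(f)$ sweep out only the $9$-dimensional subvariety $\Eig_{3,\Sym}\subseteq(\p^2)^{(7)}$; indeed the induced map $f\mapsto B(E(f))$ has image of dimension at most $9$ and so cannot dominate the $13$-dimensional family of L\"uroth quartics. The genericity of smoothness must therefore be confirmed inside this special Bateman subfamily, and the cleanest way to guarantee that the subfamily is not entirely contained in the discriminant is to pin down one explicit smooth example and propagate it by openness.
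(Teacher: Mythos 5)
Your proposal is correct and follows the same skeleton as the paper's proof: the conic statement via \cite[Lemma 9.1]{OS1}, and the collinearity statement by feeding the smoothness of the branch quartic into Lemma \ref{lem: branched is smooth iff no collinear}. The only point of divergence is how you justify that $B(E(f))$ is a smooth quartic for general $f$. The paper simply cites the proof of \cite[Theorem 10.4]{OS1} for this fact, whereas you argue by semicontinuity: smoothness of the branch quartic is open in the family over the locus of cubics with seven reduced eigenpoints and no six on a conic, so a single explicit witness suffices. Your route is slightly more self-contained and, amusingly, the required witness is already in the paper --- Example \ref{example: maple1} exhibits a cubic whose branch curve is an explicit smooth quartic, though the authors present it as an illustration rather than as part of the proof. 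Your remark that one cannot invoke the genericity of smooth L\"uroth branch quartics over all seven-point configurations, because $\Eig_{3,\Sym}$ is only a $9$-dimensional subvariety of $(\p^2)^{(7)}$, is exactly the right caveat and is what makes the explicit-example step genuinely necessary in your version; the paper sidesteps it by leaning on the external reference, which works with the Geiser/Scorza construction directly on the family of cubics. Both arguments are sound; yours trades a citation for a computation plus an openness argument.
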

\begin{proof}
We already observed that $E(f)$ does not contain 6 points on a conic by \cite[Theorem 5.1]{ASS} or \cite[Lemma 9.1]{OS1}. Moreover, by the proof of \cite[Theorem 10.4]{OS1}, since $f$ is general the branch locus of the associated Geiser map is a smooth L\"uroth quartic. Hence by Lemma \ref{lem: branched is smooth iff no collinear} the eigenscheme $E(f)$
contains no collinear triples.
\end{proof}

\begin{example}\label{example: maple1}
The eigenscheme of the smooth plane cubic
\[f=x_0x_2^2+x_0^2x_2-2x_0x_1x_2+x_0^3+x_0^2x_1-x_0x_1^2-x_1^3\]
has no collinear triples. In this case the Jacobian curve has equation 
\scriptsize{
\begin{align*}
& 12x_0^6-18x_0^5x_1-210x_0^4x_1^2+30x_0^3x_1^3+96x_0^2x_1^4-54x_0x_1^5+6x_1^6-36x_0^5x_2-6x_0^4x_1x_2+6x_0^3x_1^2x_2\\
    &+294x_0^2x_1^3x_2+78x_0x_1^4x_2-6x_1^5x_2+66x_0^3x_1x_2^2+234x_0^2x_1^2x_2^2+156x_0x_1^3x_2^2-150x_1^4x_2^2+18x_0^3x_2^3\\
    &+114x_0^2x_1x_2^3+270x_0x_1^2x_2^3-36x_1^3x_2^3+66x_0^2x_2^4-6x_0x_1x_2^4+60x_1^2x_2^4-6x_0x_2^5+12x_1x_2^5-6x_2^6.
\end{align*}}
\normalsize
The branch curve $B(f)$ is the smooth quartic given by the equation 
\scriptsize{
\begin{align*}
x_0^4-7x_0^3x_1+5x_0^2x_1^2+4x_0x_1^3+x_1^4-x_0^3x_2+9x_0x_1^2x_2+3x_1^3x_2+14x_0x_1x_2^2-6x_1^2x_2^2-7x_1x_2^3+2x_2^4.
\end{align*}}
\normalsize
Moreover, by making experiments with random forms, it is possible to check that if $f$ is a general triangle, then $E(f)$ consists of 7 reduced points, no three of which on a line and no six of which on a conic. The same holds when $f$ is the union of a general conic and a line.
 \end{example}

\section{Eigenschemes as base loci of Laguerre nets}\label{sec: laguerre} 
In this section we analyze the
geometry of reduced zero-dimensional eigenschemes of tensors $T\in(\C^3)^{\otimes d}$ with $d\ge 4$. As usual, by Remark \ref{non è restrittivo considerare i parzialmente simmetrici} we assume that $T=(g_0,g_1,g_2)\in (\Sym^{d-1}(\C^3))^{\oplus 3}$.
We recall from Definition \ref{def:eigenscheme} that the ideal of the eigenscheme is generated by the $2\times 2$ minors of the matrix
\begin{equation}\label{matrice_di_definizione}
\left(
	\begin{matrix}
	x_0 & x_1 & x_2 \\
	g_0 & g_1 & g_2\\
	\end{matrix}\right).
\end{equation}
where $g_i \in \Sym^{d-1}(\C^3)$. We set
\begin{equation}\label{generatori determinantali}
f_0 =x_1g_2 - x_2g_1, \quad f_1 =x_2g_0-  x_0g_2,\quad f_2 =x_0g_1 - x_1g_0.
\end{equation}
It turns out that such eigenschemes are a class of configurations of points arising as base loci of the classical {\it 
Laguerre nets}, see \cite[Book II, Chapter IV, Section 3]{Cool}.

\begin{definition}
Let $d\ge 4$ and let $V\subseteq \C[x_0,x_1,x_2]_d$ be a vector subspace of dimension $3$. Let $\Lambda =\p(V)\subseteq \p ( \C[x_0,x_1,x_2]_d)$ be a net of plane curves of degree $d$. If there exists a basis $f_0, f_1, f_2$ of $V$ such that
\begin{equation}\label{Laguerre di Coolidge}
x_0f_0 + x_1f_1 + x_2f_2 = 0,
\end{equation}
then $\Lambda$ is called a {\it 
Laguerre net}.
\end{definition}

\begin{remark}
By choosing as
basis of $I_{E(T)}$ the minors of \eqref{matrice_di_definizione} with the suitable sign, it is straightforward to see
that the three generators of $I_{E(T)}$ satisfy (\ref{Laguerre di Coolidge}). Therefore eigenschemes of tensors arise as base loci of Laguerre nets.
\end{remark}

We recall that a net of degree $d\ge 4$ is called {\it irreducible} if all the curves of $\Lambda$ are irreducible. As shown in \cite[Theorem 7.3.13]{Dolgachev}, an irreducible Laguerre net is characterized by one of the following equivalent conditions.

\begin{theorem}\label{Dolg}
 Let $d\ge 4$ and let $V\subseteq \C[x_0,x_1,x_2]_d$ be a vector subspace of dimension $3$. Assume that $\Lambda=\p(V)$ is an irreducible net. Then the following properties are equivalent.
\begin{enumerate}[(i)]
  \item\label{item: i} There exists a basis $f_0, f_1, f_2$ of $V$ such that
$$
x_0f_0 + x_1f_1 + x_2f_2 = 0.
$$
\item\label{item: ii} For any basis $f_0, f_1, f_2$ of $V$, there exist three linearly independent
linear forms $l_0, l_1, l_2$ such that
$$
l_0f_0 + l_1f_1 + l_2f_2 = 0.
$$
\item \label{item: Laguerre} There exists a basis $f_0, f_1, f_2$ of $V$ such that
$$
f_0 =x_1g_2 - x_2g_1, \quad f_1 =x_2g_0-  x_0g_2,\quad f_2 =x_0g_1 - x_1g_0,
$$
where $g_0$, $g_1$, $g_2$ are homogeneous forms of degree $d-1$.
\item\label{item: iv} The base locus of a general pencil in $\Lambda$ is the union of the base locus
of $\Lambda$ and a set of $d-1$ collinear points.
\end{enumerate} 
\end{theorem}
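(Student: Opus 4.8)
The plan is to prove the chain of equivalences by combining an elementary linear-algebra argument, the Koszul-complex result already at our disposal, and a geometric analysis of the pencils contained in the net. The cheapest equivalence is (\ref{item: i})$\Leftrightarrow$(\ref{item: Laguerre}): this is exactly Lemma \ref{lem: i implica iii}, whose nontrivial direction uses the exactness of the Koszul complex to realize the syzygy vector $(f_0,f_1,f_2)$ as the image of $\beta$, i.e.\ in the determinantal form of (\ref{item: Laguerre}). I would simply invoke it, noting that irreducibility plays no role here.

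Next I would dispatch (\ref{item: i})$\Leftrightarrow$(\ref{item: ii}) by pure linear algebra. If $f_0,f_1,f_2$ is a basis with $x_0f_0+x_1f_1+x_2f_2=0$ and $f_i'=\sum_j b_{ij}f_j$ is any other basis with $(b_{ij})$ invertible, then substituting $f_j=\sum_i(b^{-1})_{ji}f_i'$ into the relation gives $\sum_i\ell_if_i'=0$ with $\ell_i=\sum_j(b^{-1})_{ji}x_j$; since $(b^{-1})$ is invertible the forms $\ell_i$ are linearly independent, which is (\ref{item: ii}). Conversely, from any relation $\ell_0f_0+\ell_1f_1+\ell_2f_2=0$ with independent $\ell_i=\sum_j a_{ij}x_j$, regrouping by $x_j$ produces a new basis $f_j''=\sum_i a_{ij}f_i$ satisfying $\sum_j x_jf_j''=0$, which is (\ref{item: i}). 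This too is independent of irreducibility.

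The geometric heart is (\ref{item: Laguerre})$\Rightarrow$(\ref{item: iv}). I would first write the generic member of the net as
$$sf_0+tf_1+uf_2=\det\begin{pmatrix}s&t&u\\x_0&x_1&x_2\\g_0&g_1&g_2\end{pmatrix},$$
so that a pencil corresponds to a line in the $(s:t:u)$-parameter plane, i.e.\ to two parameter vectors whose cross product is a fixed $(a:b:c)$. A point lies in the base locus of this pencil if and only if $(f_0,f_1,f_2)$ is proportional to $(a,b,c)$. The locus $(f_0,f_1,f_2)=0$ recovers $\Bs(\Lambda)$, while $(f_0:f_1:f_2)=(a:b:c)$ forces $ax_0+bx_1+cx_2=0$, because $(f_0,f_1,f_2)=(x_0,x_1,x_2)\times(g_0,g_1,g_2)$ is orthogonal to $(x_0,x_1,x_2)$. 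Hence the residual points lie on the line $ax_0+bx_1+cx_2=0$ and are collinear; the count $d^2-(d^2-d+1)=d-1$, using $\deg\Bs(\Lambda)=d^2-d+1$ from Proposition \ref{pro: saturated}, shows there are exactly $d-1$ of them. Irreducibility enters only to guarantee that a general pencil meets properly, so that B\'ezout yields the full $d^2$ and the base locus is zero-dimensional.

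The main obstacle is the converse (\ref{item: iv})$\Rightarrow$(\ref{item: i}). Here I would set up the correspondence sending a general pencil to the unique line carrying its $d-1$ residual points, obtaining a map from the dual plane parametrizing pencils to the dual plane of lines in $\p^2$. The crux is to prove that this correspondence is a projective-linear isomorphism: once linearity is known, choosing coordinates so that the pencil with cross product $(a:b:c)$ maps to the line $ax_0+bx_1+cx_2=0$ reconstructs precisely a relation as in (\ref{item: ii}), hence (\ref{item: i}). Establishing linearity and injectivity of this map is the delicate point, and it is exactly where the hypothesis that $\Lambda$ is irreducible becomes indispensable, since it controls the base locus of the net and prevents the residual points from collapsing into $\Bs(\Lambda)$ or degenerating. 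I expect this rigidity statement, that the residual line depends linearly on the pencil, to be the genuine content of the theorem and the step demanding the most care.
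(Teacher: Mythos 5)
First, note that the paper does not prove this statement at all: it is quoted verbatim from Dolgachev's book (Theorem 7.3.13) and used as a black box, so there is no internal proof to compare against. Judged on its own terms, your proposal is correct and complete for three quarters of the statement. The equivalence (\ref{item: i})$\Leftrightarrow$(\ref{item: Laguerre}) is indeed exactly Lemma \ref{lem: i implica iii} (Koszul exactness), and irreducibility is irrelevant there; the equivalence (\ref{item: i})$\Leftrightarrow$(\ref{item: ii}) is the change-of-basis computation you describe; and your proof of (\ref{item: Laguerre})$\Rightarrow$(\ref{item: iv}) via the determinantal presentation, the orthogonality $x_0f_0+x_1f_1+x_2f_2=0$, and the count $d^2-(d^2-d+1)=d-1$ is the standard argument (you should add a word on why $\Bs(\Lambda)$ is zero-dimensional --- a fixed component would force reducible members --- and why the $d-1$ residual points of a \emph{general} pencil are reduced, e.g.\ by generic smoothness of the generically finite map $(f_0:f_1:f_2)$).

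The genuine gap is (\ref{item: iv})$\Rightarrow$(\ref{item: i}). You correctly identify that everything hinges on showing that the correspondence sending a general pencil to the line carrying its residual points is a projective-linear isomorphism, but you then declare this ``the delicate point'' and ``the step demanding the most care'' without supplying any argument. That is precisely the content of the implication: a priori this rational map $\p(V)^{\vee}\dashrightarrow(\p^2)^{\vee}$ could be given by forms of any degree, and nothing you write rules that out, so the equivalence of the four conditions is not established. A concrete way to close the gap, in the spirit of the paper's own Theorem \ref{teorema finale}, avoids the linearity claim entirely: from (\ref{item: iv}) and B\'ezout one gets $\deg\Bs(\Lambda)=d^2-d+1$ with $\dim I_{\Bs(\Lambda)}(d)=3$, and the Hilbert--Burch resolution $0\to S(-\alpha_1)\oplus S(-\alpha_2)\to S(-d)^{\oplus 3}\to I\to 0$ then forces $\{\alpha_1,\alpha_2\}=\{d+1,2d-1\}$ (since $\alpha_1+\alpha_2=3d$ and $\tfrac12(\alpha_1^2+\alpha_2^2-3d^2)=d^2-d+1$), i.e.\ there is a syzygy $l_0f_0+l_1f_1+l_2f_2=0$ with linear $l_i$; if the $l_i$ were dependent one would obtain, exactly as in the proof of Theorem \ref{teorema finale}, generators of the form $(f_2,l_0h,l_1h)$ and hence a pencil of reducible members, contradicting irreducibility. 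Independence of the $l_i$ then gives (\ref{item: ii}) and hence (\ref{item: i}).
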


Point \eqref{item: iv} of Theorem \ref{Dolg} gives some 
insight on subpencils of irreducible Laguerre nets, and we shall see that such information allows us to deduce some geometric properties of the rational map associated to an irreducible Laguerre net.

The eigenscheme of the general tensor is zero-dimensional and reduced, hence the Laguerre net $\p(I_{E(T)}(d))$ has no fixed components. However, it does not need to be irreducible. 
As an example, consider the Fermat polynomials $f=x_0^d + x_1^d +x_2^d$; in this case the generators $x_i\de_j f-x_j\de_if$ of $I_{E(f)}$ are reducible. 
Therefore we are going to study  component-free Laguerre nets, rather than irreducible ones.
Let us first fix some notation. Let $Z$ be the reduced zero-dimensional eigenscheme of a tensor $T\in (\C^3)^{\otimes d}$. 
Let us indicate by
$$
\Lambda_Z  = \p (I_Z(d))
$$ 
the net spanned by the generators of $I_Z$. Similarly to what we did in Section \ref{sec: bateman} for $d=3$, we can consider the rational map
\begin{align*}
\lambda_Z : \p^2 &\dasharrow \p (I_Z(d))^\vee\\
P&\longmapsto (f_0(P):f_1(P):f_2(P)),
\end{align*}
where 
$f_0, f_1, f_2$ is the basis (\ref{generatori determinantali}) of $I_Z(d)$. Geometrically, if $P\in \p^2 \setminus Z$, then
\[
    \lambda_Z(P)=\{[g]\in\p(I_Z(d))\mid g(P)=0\}.
\]
In other words,  $\lambda_Z(P)$ is the pencil consisting of the degree $d$ plane curves containing $Z$ and $P$. 
By blowing up $\p^2$ at $Z$ we obtain a generically finite morphism
$$
\widetilde \lambda _Z : \Bl_Z \p^2 \to \p^2.
$$
\begin{definition}
The map $\widetilde \lambda _Z : {\rm Bl}_Z \p^2 \to \p^2$, which resolves the indeterminacy locus of $\lambda_Z$, is called the \emph {Laguerre morphism} associated 
to $Z$.
\end{definition}
 If $\Lambda_Z$ is an irreducible net, from Theorem \ref{Dolg}\eqref{item: iv} we immediately
get that the morphism $\widetilde \lambda _Z$ is generically finite of degree $d-1$, and every finite fiber of the rational map $\lambda_Z$ consists of $d-1$ collinear, not necessarily distinct, points. The following result shows that an analogous statement holds for Laguerre nets which are not necessarily irreducible.

\begin{theorem}\label{thm: facile}
Let $d\ge 4$. Let $T\in (\C^3)^{\otimes d}$ such that $E(T)$ is reduced of dimension 0 and set $Z=E(T)$. Then the Laguerre map $\lambda_Z$ is generically finite of degree $d-1$ and its finite fibers consist of zero-dimensional subschemes of degree $d-1$ contained in a line. 
Moreover, every curve contracted by $\lambda_Z$ is a line and the number $\nu$ of contracted lines satisfies $\nu \le 3(d-1)$.

As a consequence, if $k\in\{2,\dots,d-1\}$ then no $kd$ points of $E(T)$ lie on a curve of degree $k$.
\end{theorem}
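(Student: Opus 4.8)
My plan is to transport every assertion to the blow-up $X=\Bl_Z\p^2$, where the Laguerre map becomes the morphism $\widetilde\lambda_Z$, and to read everything off intersection numbers. Let $H$ be the pullback of a line and $E=\sum_{i=1}^N E_i$ the exceptional divisors over the $N=d^2-d+1$ reduced points of $Z$ (the length being computed in Proposition \ref{pro: saturated}). Since $Z$ is zero-dimensional the net $\Lambda_Z$ has no fixed component and base scheme exactly $Z$, so $\widetilde\lambda_Z^*\oo(1)=D:=dH-E$ and $D$ is nef. As $D^2=(dH-E)^2=d^2-N=d-1>0$, the divisor $D$ is big and $\widetilde\lambda_Z$ is generically finite of degree $D^2=d-1$; equivalently, two general curves of the net meet in $d^2$ points, of which $N$ lie on $Z$, leaving $d-1$ in a general fibre.

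The collinearity of the fibres is forced by the Laguerre relation $x_0f_0+x_1f_1+x_2f_2=0$ from \eqref{generatori determinantali}. Evaluating it at a point $P\notin Z$ shows that $q=\lambda_Z(P)$ satisfies $q_0P_0+q_1P_1+q_2P_2=0$, so the entire fibre $\lambda_Z^{-1}(q)$ is contained in the line $L_q=V(q_0x_0+q_1x_1+q_2x_2)$. Hence every finite fibre is a length-$(d-1)$ subscheme of a line, and any curve contracted by $\lambda_Z$ to a single point $q$ lies in $L_q$ and is therefore a line.

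To count contracted lines I would first identify them: for the strict transform $\widetilde L=H-\sum_{i\in S}E_i$ of a line meeting $Z$ in the set $S$ one has $D\cdot\widetilde L=d-|S|$, and since $Z$ has no $d+1$ collinear points (Proposition \ref{prop: configurations}) a line carries at most $d$ points of $Z$ and is contracted exactly when it carries $d$ of them. The key device is the ramification divisor: from $K_X=\widetilde\lambda_Z^*K_{\p^2}+R=-3D+R$ we get $R=K_X+3D=(-3H+E)+3(dH-E)=3(d-1)H-2E$. Every contracted curve lies in the ramification locus, so each of the $\nu$ contracted lines occurs in $R$ with multiplicity at least one; as they are distinct primes with $\widetilde L_j\cdot H=1$, we obtain $\nu\le R\cdot H=3(d-1)$. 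I expect the main obstacle to be precisely the justification that $R$ is effective and that each contracted line appears in it with positive multiplicity, which rests on the local behaviour of $\widetilde\lambda_Z$ along a contracted curve, where its differential drops rank.

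For the final consequence I would argue by contradiction. If a reduced curve $C$ of degree $k\in\{2,\dots,d-1\}$ met $Z$ in $kd$ points, then for $\widetilde C=kH-\sum m_iE_i$ (with $m_i\ge1$ along $Z\cap C$) nefness of $D$ would give $0\le D\cdot\widetilde C=kd-\sum m_i\le kd-|Z\cap C|\le 0$, so $D\cdot\widetilde C=0$ and $\widetilde C$ is contracted. If $C$ is irreducible this makes $C$ a line, contradicting $k\ge 2$. If $C$ is reducible, applying the same inequality to each component forces every component to be a contracted line meeting $Z$ in exactly $d$ points, with the components pairwise disjoint on $Z$. This clashes with the observation that two distinct contracted lines must meet at a point of $Z$: were their common point outside $Z$ it would map to two values of $\widetilde\lambda_Z$, unless both lines were contracted to the same $q$, in which case $L_q$ would contain two distinct lines. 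Since $k\ge 2$ produces at least two components, this contradiction finishes the argument; the non-reduced case and the case of a line carrying more than $d$ points both reduce to the no-$(d+1)$-collinear bound.
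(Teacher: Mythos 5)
Your argument is correct, and it follows the same overall architecture as the paper's proof --- the Laguerre syzygy forces fibres into lines, a divisor of degree $3(d-1)$ controls the contracted lines, and contraction rules out $kd$ points on a degree-$k$ curve --- but you justify the individual steps differently, working almost entirely with intersection theory on $\Bl_Z\p^2$. For the degree, you compute $D^2=(dH-E)^2=d-1$, whereas the paper identifies the fibre over a general $Q$ explicitly as the intersection of the polar line $L_Q$ with the degree-$(d-1)$ curve $V(Q_0g_0+Q_1g_1+Q_2g_2)$; the explicit description costs a short computation but is reused later in the paper (for instance to analyse which fibres degenerate to lines), so your shortcut loses that byproduct. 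For the bound on $\nu$, you use the ramification class $K_X-\widetilde\lambda_Z^*K_{\p^2}=3(d-1)H-2E$ together with the fact that the differential of a generically finite morphism between smooth surfaces drops rank along every contracted curve; the step you flag as the main obstacle is indeed the crux, but in characteristic zero it is completely standard (the Jacobian determinant is a nonzero section of $\oo_X(K_X-\widetilde\lambda_Z^*K_{\p^2})$ vanishing along any contracted curve), so there is no gap --- the paper proves the same containment downstairs, showing that a contracted line consists of singular points of members of the net via a fixed-component argument on the pencils $\Gamma_P$. For the final consequence, your route (nefness of $D$ forces $D\cdot\widetilde C=0$, every component is a contracted line through exactly $d$ points of $Z$, and the pairwise-disjointness on $Z$ contradicts the fact that two distinct contracted lines must meet inside $Z$) is sound but longer than the paper's, which simply notes that by B\'ezout a curve of degree $k$ through $kd$ base points is a fixed component of every pencil through $Z\cup R$, hence is contracted, hence is a line. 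One small point worth tightening: the reduction to the reduced case in the last step should be done at the outset (replacing $C$ by $C_{\mathrm{red}}$ of degree $k'\le k$ still leaves at least $kd\ge k'd$ points of $Z$ on it, so the same inequality applies), rather than being deferred to the no-$(d+1)$-collinear bound, which by itself only handles the case $k'=1$.
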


\begin{proof}
By Remark \ref{non è restrittivo considerare i parzialmente simmetrici}, we assume that $T=(g_0,g_1,g_2)$ is partially symmetric and we consider generators $f_0,f_1,f_2$ of $I_{Z}$ of the form (\ref{generatori determinantali}). 

A direct computation shows that for any point $P=(P_0:P_1:P_2)\in \p^2 \setminus Z$, the point $\lambda_Z(P)=(f_0(P):f_1(P):f_2(P))$ is the intersection of the two lines
$$
P_0 x_0 + P_1 x_1+ P_2 x_2  = 0\mbox{ and }g_0(P) x_0 +
g_1(P) x_1+ g_2(P) x_2 = 0.
$$
So for the general $Q=(Q_0:Q_1:Q_2)\in \p^2$, the fiber $\lambda_Z^{-1}(Q)$ consists of the points $P\in \p^2$ such that
\begin{equation}\label{polars}
P_0 Q_0 + P_1 Q_1+ P_2 Q_2  =  g_0(P) Q_0 +
g_1(P) Q_1+ g_2(P) Q_2 = 0,
\end{equation}
that is the general fiber is the intersection of the polar line $L_Q$ relative to the isotropic conic and the curve of equation $Q_0 g_0  +
Q_1 g_1 + Q_2 g_2=0$. 
Hence, on the blow-up $\Bl_Z \p^2 \subseteq \p^2 \times \p^2$, the fibers of $\widetilde \lambda_Z$ are generically
contained in the divisor $W$ with bihomogeneous equation $x_0 y_0 + x_1 y_1 + x_2 y_2=0$. Since both $\Bl_Z \p^2$ and $W$ are irreducible, and since $\widetilde \lambda_Z$ is the restriction of the second projection
$p_2 : \p^2 \times \p^2 \to \p^2$, it follows that $\Bl_Z \p^2\subseteq W$; in particular, every fiber of $\widetilde \lambda_Z$ is contained in a line, and by construction the same holds for $\lambda_Z$. As a consequence, the map $\lambda_Z$ contracts only lines.

To bound the number of contracted lines, we observe that the ramification divisor of $\lambda_Z$ 
is exactly the {\it Jacobian divisor} $J$ of the net $\Lambda_Z$, given by the determinant of the matrix with entries the partial derivatives of $f_0,f_1,f_2$. Indeed, a point  $P\in \p^2\setminus Z$ is a ramification point if and only if the pencil of degree $d$ curves with base locus $Z \cup P$ consists of curves intersecting tangentially at $P$, and there is always a singular curve in the pencil with $P$ as a singular point
(see \cite[Book I, Chapter IX, Theorem 25]{Cool}). The locus of all singular points of the curves in the net is given precisely by the Jacobian divisor. Such a divisor has degree $3(d-1)$. We claim that $J$ contains every contracted line $L$; indeed, since $\lambda_Z (L)$
is a point for any $P\in L \setminus Z$, the pencil $\Gamma_P$ through $Z\cup P$
is constant, hence all the points of $L$ are fixed for $\Gamma_P$. This means that $L$ is a fixed component of $\Gamma_P$. We write $\Gamma_P= L +|C_P|$, where $|C_P|$ is a pencil of degree
$d-1$ curves. For any point $R\in L\setminus (Z\cup P)$, there exists a curve of $|C_P|$ passing through $R$, hence the general point of $L$
is a singular point for some curve of $\Gamma_P$, and hence for some curve of $\Lambda_Z$. As every contracted curve is a line, the maximal number of such lines is $\deg J=3(d-1)$.

Finally, if $kd$ points of $Z$ belong to a curve $C$ of degree $k$, then for every $R\in C$, the pencil through
$Z \cup R$ has $C$ as a fixed component; it follows that $C$ is contracted by $\lambda_Z$. By the first part of the present proof, the only possibility is $k=1$.
\end{proof}

\begin{remark}
The bound $\nu \le 3(d-1)$ is sharp; indeed, the Fermat polynomials
$$
f=x_0^d + x_1^d +x_2^d
$$
have a reduced, zero-dimensional eigenscheme of degree $\frac{(d-1)^3-1}{d-2}$. They have exactly $3(d-1)$ sets of $d$ 
collinear points, and every such line is contracted by $\lambda_{E(f)}$.  
The ramification divisor of $\lambda_{E(f)}$ splits as the union of $3(d-1)$ lines, and 
the branch locus of $\widetilde \lambda_{E(f)}$ is 
the union of $3(d-1)$ distinct points. For $d=4$, the eigenscheme is the set
\begin{align*}
\{& (1:0:0),(0:1:0),(0:0:1),\\
    &(1:1:0),(1:0:1),(0:1:1),(1:-1:0),(1:0:-1),(0:1:-1),\\
    &(1:1:-1),(1:-1:1),(-1:1:1),(1:1:1)\}
\end{align*}
If we label these points $p_0,\dots,p_{12}$, then the 9 quadruples of collinear points are
\begin{align*}
    \{0, 1, 3, 6\},
 \{0, 2, 4, 7\},
 \{0, 5, 11, 12\},
 \{0, 8, 9, 10\},
 \{1, 2, 5, 8\}\\
 \{1, 4, 10, 12\},
 \{1, 7, 9, 11\},
 \{2, 3, 9, 12\},
 \{2, 6, 10, 11\}.
\end{align*}
\end{remark}

By Proposition \ref{prop: configurations}, zero-dimensional eigenschemes of tensors in $(\C^{n+1})^{\otimes d}$ never contain $d+1$ collinear points. Now we focus on tensors with zero-dimensional reduced eigenscheme containing $d$ collinear points. The next result can be seen as a generalization of Proposition \ref{pro: plane cubics, general position}.
 
 \begin{theorem}\label{thm : general no d collinear}
Let $d\ge 3$. The eigenscheme $E(f)$ of a general homogeneous polynomial $f\in\C[x_0,x_1,x_2]_d$ contains no $d$ collinear points. As a consequence, the general element of $(\C^3)^{\otimes d}$ has no $d$ collinear eigenpoints. \end{theorem}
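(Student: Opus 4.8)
The plan is to show that the forms with $d$ collinear eigenpoints fill only a proper closed subvariety of $\p(\C[x_0,x_1,x_2]_d)$, by a dimension count on the incidence locus
\[
\Sigma=\{(f,L)\mid E(f)\text{ is reduced of dimension }0,\ L\cap E(f)\text{ has length }d\}\subseteq \p(\C[x_0,x_1,x_2]_d)\times(\p^2)^\vee.
\]
Writing $N=\binom{d+2}{2}$, the target $\p(\C[x_0,x_1,x_2]_d)$ has dimension $N-1$, and since $(\p^2)^\vee$ is complete the projection $\Sigma\to\p(\C[x_0,x_1,x_2]_d)$ has closed image; so it suffices to prove $\dim\Sigma<N-1$. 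I would obtain this by bounding, uniformly in $L$, the dimension of the fibre $\Sigma_L=\{f\mid L\cap E(f)\text{ has length }d\}$, recalling that by Proposition \ref{prop: configurations} no line can meet a zero-dimensional $E(f)$ in more than $d$ points.

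For the bulk of the lines I would exploit that the isotropic conic $V(q)$ splits $(\p^2)^\vee$ into the open orbit of secant lines and the one-dimensional orbit of tangent lines under the orthogonal group. Fix a secant line; by \cite{QZ} I may assume $L=V(x_2)$, which is secant. Writing $f=a_0(x_0,x_1)+x_2a_1(x_0,x_1)+x_2^2(\cdots)$ with $\deg a_0=d$ and $\deg a_1=d-1$, Lemma \ref{lem: restrizione ad un sottospazio lineare} gives $L\cap E(f)\subseteq E(f_{|L})=E(a_0)$, and by Lemma \ref{lem:nonempty}(\ref{bullet: count general eigenpoints}) the binary eigenscheme $E(a_0)$ has degree $d$; hence if $L$ carries $d$ eigenpoints then $L\cap E(f)=E(a_0)$. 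Evaluating the three determinantal minors at $P=(P_0:P_1:0)$ shows that $P$ is an eigenpoint of $f$ if and only if it is an eigenpoint of the binary form $a_0$ and, in addition, $a_1(P)=0$, since $(\de_2f)_{|L}=a_1$ and $(P_0,P_1)\neq(0,0)$. The crux is now a degree comparison: the form $a_1$ has degree $d-1$, while $E(a_0)$ consists of $d$ points, so $a_1$ can vanish on all of them only if $a_1\equiv 0$. This is $\dim\C[x_0,x_1]_{d-1}=d$ independent linear conditions, so $\Sigma_L$ has codimension at least $d$ over a secant line. Adding the two dimensions of the family of secant lines yields, on this part, $\dim\le(N-1-d)+2=N-1-(d-2)<N-1$ for $d\ge 3$.

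It remains to control the tangent lines, where Lemma \ref{lem: restrizione ad un sottospazio lineare} no longer reduces to a coordinate line because $q$ restricts degenerately to $L$. Here I would argue directly with the minors: a short computation shows that the eigenpoints of $f$ lying on a tangent line $L$, apart from one distinguished point of $L$, are the zeros of a single binary form of degree $d-1$, hence number at most $d-1$. Consequently $L$ can carry $d$ eigenpoints only if that distinguished point is itself an eigenpoint, which already imposes codimension at least $2$. Since the tangent lines form a one-dimensional family, this part of $\Sigma$ has dimension at most $(N-1-2)+1<N-1$ as well. Combining the two estimates gives $\dim\Sigma<N-1$, so the general $f\in\C[x_0,x_1,x_2]_d$ has no $d$ collinear eigenpoints.

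For the statement about arbitrary tensors I would pass to partially symmetric representatives via Remark \ref{non � restrittivo considerare i parzialmente simmetrici}. The property of having no $d$ collinear eigenpoints is open on the space of partially symmetric tensors, again by completeness of $(\p^2)^\vee$. The symmetric tensors form a linear subspace meeting this open set, by the first part of the proof, so the open set is nonempty and hence dense; therefore the general tensor in $(\C^3)^{\otimes d}$ has no $d$ collinear eigenpoints. I expect the genuine difficulty to be the tangent-line analysis: both the restriction lemma and the orthogonal normalization apply cleanly only to secant lines, so the degenerate orbit must be handled by a separate and slightly delicate estimate, which is exactly where one must verify that its contribution to $\Sigma$ stays below the critical dimension $N-1$.
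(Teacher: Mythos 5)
Your overall strategy coincides with the paper's: both proofs are a dimension count on an incidence variety of pairs (form, line), and in both the decisive input is that forcing $d$ eigenpoints onto a fixed line $L$ imposes $d$ independent linear conditions on $f$, namely the identical vanishing of a restricted polar. Indeed your $a_1=(\de_2 f)_{|L}$ for $L=V(x_2)$ is exactly the first polar $P_Q(f)$ restricted to the polar line of $Q=(0:0:1)$, which is the one explicit fibre the paper computes before invoking upper semicontinuity of the fibre codimension. The paper avoids your secant/tangent case split by parametrizing lines by their poles with respect to the isotropic conic, which covers tangent lines on the same footing; on the other hand your count works uniformly for $d\ge 3$, whereas the paper's Laguerre machinery requires $d\ge 4$ and falls back on the Geiser--Bateman analysis of Proposition \ref{pro: plane cubics, general position} for cubics.

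The one genuine gap is the tangent-line case, and the structural claim you make there is not correct as stated. Take $L=V(x_0+ix_1)$, tangent to $q$ at $(1:i:0)$, and parametrize $P=(-is:s:t)$. Setting $g(s,t)=(\de_0 f+i\de_1 f)(P)$, of degree $d-1$, and $c_0(s,t)=s\,\de_2f(P)-t\,\de_1f(P)$, of degree $d$, the three minors become $c_0$, $c_0+ic_1=itg$ and $c_2=-sg$, so the eigenpoints of $f$ on $L$ are precisely the common zeros of $g$ and $c_0$. There is no distinguished point, and the picture is the same as on a secant line: $d$ reduced eigenpoints on $L$ force $g\equiv 0$, which is $d$ independent linear conditions on $f$ (the map $f\mapsto g$ is surjective onto $\C[s,t]_{d-1}$, as one sees on the monomials $x_0^{k+1}x_2^{d-1-k}$). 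So the bound you need holds with codimension $d$, far more than the codimension $2$ you invoke, but not for the reason you give; with this computation substituted in, your proof closes. Two further small points: your $\Sigma$ is not closed, so one should project $\overline\Sigma$; and the identification $L\cap E(f)=E(a_0)$ in the secant case requires $\dim E(a_0)=0$, but when $a_0$ is a power of $x_0^2+x_1^2$ the line carries at most the $d-1$ zeros of $a_1$ as reduced eigenpoints, so nothing is lost.
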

\begin{proof}
Consider a symmetric tensor $f\in\C[x_0,x_1,x_2]_d$. The case $d=3$ is solved in Proposition \ref{pro: plane cubics, general position}, so we assume that $d\ge 4$ and we consider the Laguerre map associated to $E(f)$. Since any subset of $d$ collinear points determines a line contracted by $\lambda_{E(f)}$, we are interested in characterizing the locus of polynomials which give rise to such a contraction.

By the proof of Theorem \ref{thm: facile}, the fiber of $\lambda_{E(f)}$ over $Q=(Q_0:Q_1:Q_2)$ is
given by 
$V(Q_0 x_0 + Q_1 x_1+ Q_2 x_2,\ Q_0\partial_0 f   +Q_1\partial_1 f + Q_2\partial_2 f)$, hence
we see that a fiber is a line if and only if either the
first polar $P_Q (f)=Q_0\partial_0 f  +Q_1\partial_1 f + Q_2\partial_2 f$ is identically zero, or the polar line $L_Q=Q_0 x_0 + Q_1 x_1+ Q_2 x_2$ is a component of $P_Q (f)$.
The first case occurs if and only if $V(f)$ consists of concurrent lines (see \cite[Book I, Chapter 9, Theorem 30]{Cool}). To analyse the second case, we
write $L_Q$ in parametric equations with parameters $(t_0:t_1)$. By substituting such expressions in $P_Q(f)$, we get a
homogeneous polynomial of degree $d-1$ in $t_0, t_1$. Therefore the condition $V(L_Q) \subseteq V(P_Q(f))$ is satisfied if and only if the $d$ coefficients of such a polynomial are zero. These equations are bihomogeneous of bidegree $(d,1)$ in the $Q_i$ and the coefficients of $f$, so they determine $d$ hypersurfaces in $\p^2 \times 
\p(\C[x_0,x_1,x_2]_d)$. Let us set
${\mathcal L} \subseteq \p^2 \times 
\p(\C[x_0,x_1,x_2]_d)$ to be their zero  locus. Let $\pi_1$ and $\pi_2$ denote the projections. We want to prove that $\pi_2({\mathcal L})\neq\p(\C[x_0,x_1,x_2]_d)$. We claim that the $d$ equations defining $\p(\C[x_0,x_1,x_2]_d)$ are all independent, so that
\begin{equation}\label{codim}
\codim\mathcal{L} = d.
\end{equation}
Indeed, the restriction of the first projection $\pi_1 : {\mathcal L} \to \p^2$ is surjective, and all its fibers have codimension at most $d$. Since the codimension is 
upper semicontinuous, in order to prove \eqref{codim} it suffices to
exhibit a specific fiber having codimension $d$. Let $Q=(0:0:1)$. Then 
$$
L_Q=(t_0:t_1:0)\mbox{ and }P_Q(f)= \partial _2 f, 
$$
so the condition $V(L_Q) \subseteq V(P_Q(f))$ becomes 
$
\partial_2 f (t_0,t_1,0)\equiv 0.
$
This is equivalent to
require that all the coefficients 
of the monomials 
$
x_0 ^{d-1} x_2,
x_0^{d-2}x_1 x_2, \dots, x_1 ^{d-1}x_2
$
are zero, and the latter are $d$ linearly independent conditions. Hence $\codim\pi_1^{-1}(Q)=d$ and (\ref{codim}) follows. Therefore
$$
\codim \pi_2({\mathcal L}) \ge d-2>0
$$
and our statement follows.

Now we consider tensors that are not necessarily symmetric. Let ${\mathcal C}\subseteq (\p^2)^{(d^2-d+1)}$ be the locus 
of all degree $d^2-d+1$ reduced subschemes 
containing $d$ collinear points. Notice that $\dim\ov{{\mathcal C}}=2+d +2(d-1)^2 < \dim (\p^2)^{(d^2-d+1)}$, so $\overline {\mathcal C}$ is a proper closed subscheme of $(\p^2)^{(d^2-d+1)}$. In order to conclude, we just need to show that there is a symmetric tensor $f$ such that $E(f)\in(\p^2)^{(d^2-d+1)}\setminus\ov{{\mathcal C}}$. By the first part of this theorem, it is enough to take a general polynomial.
\end{proof}

\begin{remark}
Now we want to highlight a connection with some existing literature. We identify $\p (I_Z(d))^\vee =\p^2$. Let us determine the class of $S=\Bl_Z \p^2$ in the Chow ring $A(\p^2 \times \p^2)$. By choosing $L_1$ and $L_2$ as generators of the Picard groups of the two factors, and by setting $p_i:\p^2 \times \p^2 \to \p^2$ to be the two projections, we have that the two divisors
$h_1=p_1^\star L_1$ and $h_2=p_2^\star L_2$
are generators for $A(\p^2 \times \p^2)$. Then it is simple to check that
the class of $S$ in $A (\p^2 \times \p^2)$ is given by
$$
[S]=(d-1)h_1^2
+ dh_1 h_2 + h_2^2.
$$

 As seen in the proof of Theorem \ref{thm: facile}, an eigenscheme $Z$ determines an embedding of $S$ in the divisor $W\subseteq\p^2\times\p^2$ defined by the equation $x_0 y_0 + x_1 y_1+x_2 y_2=0$. The class of $W$ is $h_1 +h_2$, and the restriction of the second projection $p_{2|W}$ realizes $W$ as a projective bundle over $\p^2$. Observe that such an equation corresponds to
the projective bundle associated to the twisted universal quotient bundle ${\mathcal Q}(1)$ of $\p^2$,
which is in turn isomorphic to
the tangent bundle ${\mathcal T}_{\p^2}$, so that $W=\p({\mathcal T}_{\p^2})$.

Moreover, we have seen in the proof of Theorem \ref{thm: facile} that $S$ is contained in the intersection of the two divisors determined by the equation (\ref{polars}), that is, $S\subseteq W \cap D'$ where $D'$ has equation 
\begin{equation}\label{eq_D'}
 y_0 g_0(x_0,x_1,x_2)+ y_1 g_1(x_0,x_1,x_2)+y_2 g_2(x_0,x_1,x_2)=0.
 \end{equation}
The class of such a complete intersection is $$(h_1 + h_2) \cdot ((d-1)h_1+h_2)= (d-1)h_1^2+ dh_1 h_2 + h_2^2=[S],$$
hence $S=W\cap D'$. By setting $t_1= {h_1}_{|W}$ and $t_2= {h_2}_{|W}$, we see that $S$ determines uniquely a section in $H^0 (\oo_W ((d-1)t_1 + t_2))$.
 
Finally, we believe that our construction is related to that performed in \cite[Section 3.1]{OO} to determine the degree of eigenschemes, and in \cite[Section 5.2]{Abo} to study the dimension and degree of eigendiscriminants. According to
\cite[Lemma 5.6]{Abo}, given a vector space $V$ and an order $d$ tensor $A \in V^{\otimes d}$,
there exists a unique section class $[s] \in \p (H^0({\mathcal T}_{\p(V)}(d-2)))$
such that the zero scheme $(s)_0$ is equal to the eigenscheme of $A$. Then we notice that
$$
p_{1_\star} \oo_{ W} ((d-1)t_1 + t_2) \cong {\mathcal T}_{\p^2} (d-2).
$$
\end{remark}

\begin{remark}
Let us determine the branch divisor $\widetilde \lambda_{Z\star}  K_{\widetilde \lambda_Z}$ for $d=4$. In this case the Laguerre morphism is generically finite of degree 3. Observe that
$\widetilde \lambda_{Z\star}  p_1^\star L_1 \sim  4 L_2$; indeed, 
by construction we have ${\widetilde \lambda}_{Z\star} { p_1^\star} L_1
=\lambda_{Z\star} L_1$, and the image 
of a general line under the Laguerre map
is a quartic curve, since $\lambda_Z$ is defined by degree $4$ polynomials. Moreover, since the exceptional divisors $E_i$ on $S$ are lines for any $i\in\{1,\dots, 13\}$, we have $\widetilde \lambda_{Z\star} E_i \sim  L_2$. Finally, from the definition of the push-forward cycle and from the fact that $\widetilde \lambda_Z$ is a degree three cover, we have
 $\widetilde {\lambda}_{Z\star} {\widetilde \lambda_Z}^\star L_2 \sim  3 L_2$.
 
This implies that
$\widetilde \lambda_{Z\star} K_{\widetilde \lambda_Z} \sim 10 L_2$.
When $\widetilde\lambda_Z$ is a finite cover,
by \cite[Proposition 8.1]{Miranda} the dual of
its {\it Tschirnhausen bundle} is precisely ${\mathcal T}_{\p^2}(1)$. 
Moreover, by \cite[Proposition 4.1]{Miranda}, 
$
\widetilde\lambda_{Z\star} K_{\widetilde \lambda_Z} \sim 2 c_1 ({\mathcal T}_{\p^2}(1)),
$
and indeed we have
$
c_1 ({\mathcal T}_{\p^2}(1))= c_1( {\mathcal T}_{\p^2}) + 2L_2 =5 L_2.
$
On the other hand, by \cite[Lemma 10.1]{Miranda},
the branch divisor of a general triple cover $\rho$
between smooth surfaces has only cusps as singularities, corresponding to the total ramification points, and their number is $3 c_2 ({\mathcal E})$, where ${\mathcal E}$ is the Tschirnhausen bundle of $\rho$. Then the branch divisor has $3c_2(\T_{\p^2}(1))=3\cdot 7=21$ cusps. This setting is confirmed by the following example.
\end{remark}
\begin{example}\label{example: maple2}
Consider the irreducible form $f = x_0^3x_2-x_0x_1^2x_2-x_0x_1x_2^2+x_1^4+x_2^4$. The scheme $E(f)$ is reduced, zero-dimensional and has degree $13$. The Jacobian divisor is the curve defined by the polynomial
\scriptsize{
\begin{align*}
& 20x_0^8x_1-44x_0^6x_1^3-32x_0^5x_1^4-164x_0^4x_1^5+32x_0^3x_1^6+60x_0^2x_1^7+8x_0^8x_2-64x_0^6x_1^2x_2+224x_0^5x_1^3x_2\\
&+72x_0^4x_1^4x_2+96x_0^3x_1^5x_2+240x_0^2x_1^6x_2-124x_0^6x_1x_2^2+48x_0^5x_1^2x_2^2+220x_0^4x_1^3x_2^2-48x_0^3x_1^4x_2^2+348x_0^2x_1^5x_2^2\\&-60x_1^7x_2^2-44x_0^6x_2^3-80x_0^5x_1x_2^3-56x_0^4x_1^2x_2^3-608x_0^3x_1^3x_2^3-556x_0^2x_1^4x_2^3-720x_0x_1^5x_2^3-112x_1^6x_2^3\\&+16x_0^5x_2^4+108x_0^4x_1x_2^4+144x_0^3x_1^2x_2^4+328x_0^2x_1^3x_2^4+96x_0x_1^4x_2^4-52x_1^5x_2^4+52x_0^4x_2^5+320x_0^3x_1x_2^5\\&+216x_0^2x_1^2x_2^5+720x_0x_1^3x_2^5+52x_1^4x_2^5-32x_0^3x_2^6-624x_0^2x_1x_2^6-96x_0x_1^2x_2^6+112x_1^3x_2^6-60x_0^2x_2^7+60x_1^2x_2^7.\end{align*}}\normalsize
The branch locus is a curve of degree $10$. The curve $B(f)$ has equation
\scriptsize{\begin{align*}&4x_0^{10}+24x_0^9x_1-408x_0^8x_1^2-76x_0^7x_1^3+768x_0^6x_1^4+96x_0^5x_1^5-356x_0^4x_1^6-24x_0^3x_1^7-24x_0^2x_1^8-4x_0x_1^9+144x_0^8x_1x_2\\    &+288x_0^7x_1^2x_2-72x_0^6x_1^3x_2-720x_0^5x_1^4x_2-720x_0^4x_1^5x_2+720x_0^3x_1^6x_2+360x_0^2x_1^7x_2-20x_0^8x_2^2-84x_0^7x_1x_2^2\\    &+3420x_0^6x_1^2x_2^2+2856x_0^5x_1^3x_2^2-5027x_0^4x_1^4x_2^2-1284x_0^3x_1^5x_2^2-90x_0^2x_1^6x_2^2+96x_0x_1^7x_2^2+25x_1^8x_2^2-16x_0^7x_2^3\\    &-568x_0^6x_1x_2^3+3216x_0^5x_1^2x_2^3+1448x_0^4x_1^3x_2^3+4600x_0^3x_1^4x_2^3+672x_0^2x_1^5x_2^3-2024x_0x_1^6x_2^3+16x_1^7x_2^3+36x_0^6x_2^4\\    &-488x_0^5x_1x_2^4-5166x_0^4x_1^2x_2^4-4416x_0^3x_1^3x_2^4+11730x_0^2x_1^4x_2^4-1160x_0x_1^5x_2^4-60x_1^6x_2^4+48x_0^5x_2^5+552x_0^4x_1x_2^5\\    &-14424x_0^3x_1^2x_2^5+1752x_0^2x_1^3x_2^5+1536x_0x_1^4x_2^5-48x_1^5x_2^5-27x_0^4x_2^6+828x_0^3x_1x_2^6-9878x_0^2x_1^2x_2^6+1600x_0x_1^3x_2^6\\    &+46x_1^4x_2^6-48x_0^3x_2^7-96x_0^2x_1x_2^7-552x_0x_1^2x_2^7+48x_1^3x_2^7+6x_0^2x_2^8-276x_0x_1x_2^8-12x_1^2x_2^8+16x_0x_2^9-16x_1x_2^9+x_2^{10}.\end{align*}}\normalsize
By means of the computer algebra system Macaulay2 we verify that the singular locus of $B(f)$ is a zero-dimensional scheme of degree $42$. The radical of its defining ideal has degree $21$. Since ordinary nodes appear with multiplicity one in the Jacobian scheme, while ordinary cusps have multiplicity two, and other singularities have higher multiplicity, we believe that the branch locus has indeed $21$ cusps as singularities.
\end{example}

\section{Characterization of configurations of eigenpoints}\label{sec: seven}
In this section we shall prove the converse of Theorem \ref{thm: facile}. We shall make use of the {\it numerical character} defined 
in \cite[Definition 2.4]{GrusonPeskine}. The numerical character is a sequence of positive integers associated to a zero-dimensional scheme $Z\subseteq\p^2$ which allows to read off geometric properties from the minimal free resolution of $I_Z$. 
\begin{definition} \label{def: char num}
    Let $Z \subseteq \p^2$ be a zero-dimensional subscheme and set
    $$
    d= \min\{ k\in \Z\mid\dim I_Z(k)\neq 0\}.
    $$
    Set $R=\C[x_0,x_1]$ and let $\C[Z]=\C[x_0,x_1,x_2]/I_{Z}$  be the homogeneous coordinate ring
    of $Z$. Assume that the line $x_2=0$ is in general position with respect to $Z$. Then the natural morphism $R \to \C[Z]$
    gives $\C[Z]$ a structure of 
    graded
    $R$-module of finite type.  A minimal resolution of $\C[Z]$ as an
    $R$-module has the form
    $$
    0 \to \bigoplus _{i=0}^{d-1} R(-n_i)
    \to \bigoplus _{i=0}^{d-1} R(-i) \to \C[Z] \to 0.
    $$
    The integers $n_i$
    are called
    he {\it numerical character}  of $Z$. Such a sequence is usually denoted by  $$\chi(Z)= (n_0,\dots,n_{d-1}),$$ where 
    $n_0 \ge n_1 \ge \ldots  \ge n_{d-1} \ge d$. Finally, if $n_i\le n_{i+1}+1$ for any $i\in\{0,\dots,d-2\}$, we will say that $\chi(Z)$ is {\emph connected}.
\end{definition}
We will need the following relations. For references, see \cite[Theorem 2.7]{GrusonPeskine} and \cite[Proposition I.2.1]{ElliaStrano}.

\begin{lemma}\label{lemma: proprietà del carattere numerico} Let $Z\subseteq\p^2$ be a zero-dimensional subscheme and let $\chi(Z)=(n_0,\dots,n_{d-1})$ be its numerical character.
\begin{enumerate}
    \item \label{item: grado} The degree of $Z$ is $$\deg Z= \sum _{i=0}^{d-1} (n_i -i).$$
\item \label{item: resolution}\label{gradi_sizigie} Let $S=\C[x_0,x_1,x_2]$. Let
$$    0 \to \bigoplus_{i=1}^{t-1} S(-\alpha_i) \to    \bigoplus_{j=1}^{t} S(-\beta_j)\to I_Z \to 0
    $$
be the minimal free resolution of $I_Z$, where $d<\alpha_1 \le \ldots \le \alpha_{t-1}$ and
    $\beta_1 \le \ldots \le \beta_t$. Then
    \begin{equation*}
  \#\{i \mid \beta_i = d\}  =    \# \{i \mid n_i = d\}+1.
        \end{equation*}
Moreover, for any $s \ge d+1$ we have
\begin{equation*}
\#\{i \mid \alpha_i = s\}=\#\{i \mid \beta_i = s\}-\# \{i \mid n_i = s\}+\# \{i \mid n_i = s-1\}.
\end{equation*}
    \end{enumerate}
\end{lemma}

We also recall a result from \cite[Page 112]{ElliaPeskine}:
\begin{proposition}\label{ellia}
Let $Z\subseteq \p^2$ be a zero-dimensional subscheme and let $\chi(Z)=(n_0,\dots,n_{d-1})$ be its numerical character. If $n_{s-1}> n_s +1$ for some $1\le s\le d-1$, then there exists
a curve $C$ of degree $s$ such that 
$$
\chi (C\cap Z)= (n_0, \dots, n_{s-1})\mbox{ and }
\chi({\rm Res}_C(Z))=(n_s -s, \dots, n_{d-1}-s).
$$
\end{proposition}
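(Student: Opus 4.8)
The plan is to deduce the splitting from the minimal free resolution of $I_Z$, using the structure of codimension-two Cohen--Macaulay ideals together with the residuation exact sequence. Since $Z$ is zero-dimensional, $I_Z\subseteq S=\C[x_0,x_1,x_2]$ is saturated and $S/I_Z$ is arithmetically Cohen--Macaulay of codimension $2$ (as in Proposition~\ref{pro: saturated}), so by the Hilbert--Burch theorem there is a $t\times(t-1)$ matrix $\Phi$ of homogeneous forms with $I_Z=I_{t-1}(\Phi)$, the ideal of maximal minors; the generator degrees $\beta_j$ and the syzygy (column) degrees $\alpha_i$ are completely determined by $\chi(Z)=(n_0,\dots,n_{d-1})$ through Lemma~\ref{lemma: propriet� del carattere numerico}\eqref{gradi_sizigie}. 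The whole graded Betti table can therefore be read off from the character, and the first task is to see what the gap $n_{s-1}>n_s+1$ imposes on it.

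The heart of the argument is to turn the numerical gap into a \emph{block decomposition} of $\Phi$. The inequality $n_{s-1}>n_s+1$ means that the value $n_s+1$ is not attained by any $n_i$, so that the numerical character separates into a ``high'' part $(n_0,\dots,n_{s-1})$ and a ``low'' part $(n_s,\dots,n_{d-1})$ with a true degree gap between them. Feeding this into the dictionary of Lemma~\ref{lemma: propriet� del carattere numerico}\eqref{gradi_sizigie}, the generators and syzygies of $I_Z$ split into two degree ranges with nothing in between, and minimality of the resolution forces the off-diagonal block linking the two ranges to vanish identically: an entry there would have degree outside the allowed range. After reordering rows and columns by degree, $\Phi$ thus becomes block triangular. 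This step is where the hypothesis $1\le s\le d-1$ is used, so that $C$ will meet $Z$ without containing it, and it is the \textbf{main obstacle} of the proof, since everything afterwards is formal.

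Once $\Phi$ is block triangular, a Laplace/Cauchy--Binet expansion of its maximal minors shows that a suitable subset of the minimal generators of $I_Z$ acquires a common factor $h$, and bookkeeping of the column degrees forces $\deg h=s$. I set $C:=V(h)$. By construction the complementary minors generate the ideal $(I_Z:h)$ of the residual scheme, with degrees exactly $n_s-s,\dots,n_{d-1}-s$, while the quotients $f_k/h$ generate the ideal of $Z\cap C$ in the degrees $n_0,\dots,n_{s-1}$. To identify the two characters rigorously I use the residuation exact sequence
\[ 0\to \bigl(S/(I_Z:h)\bigr)(-s)\xrightarrow{\ \cdot h\ } S/I_Z \to S/(I_Z+(h))\to 0, \]
whose outer terms are the homogeneous coordinate rings of $\mathrm{Res}_C(Z)$ and of $Z\cap C$. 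Comparing Hilbert functions gives $t^s\,H_{\mathrm{Res}_C(Z)}(t)+H_{Z\cap C}(t)=H_Z(t)$, and hence the additivity $\deg(Z\cap C)+\deg\mathrm{Res}_C(Z)=\deg Z$ of Lemma~\ref{lemma: propriet� del carattere numerico}\eqref{item: grado}.

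It then remains to match the pieces: the block of $\Phi$ indexed by the high part is a minimal resolution of $I_{Z\cap C}$, so by uniqueness of minimal resolutions $\chi(Z\cap C)=(n_0,\dots,n_{s-1})$, while the low block (twisted by $s$, exactly the shift recorded by the factor $t^s$ above) resolves $I_{\mathrm{Res}_C(Z)}$, giving $\chi(\mathrm{Res}_C(Z))=(n_s-s,\dots,n_{d-1}-s)$. This is precisely the asserted splitting. As a remark, the same conclusion can be reached more hands-on by working directly with the graded $\C[x_0,x_1]$-module $\C[Z]$ from Definition~\ref{def: char num}: the identical gap produces a splitting of the presentation matrix defining the numerical character, and the two factors are the characters of $Z\cap C$ and of $\mathrm{Res}_C(Z)$; either route reduces the statement to the block-triangularity step flagged above, which is the genuine input from the theory of the numerical character of Gruson--Peskine and Ellia--Peskine.
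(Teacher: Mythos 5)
First, a point of reference: the paper does not prove Proposition~\ref{ellia} at all --- it is quoted from Ellia--Peskine --- so there is no internal proof to compare yours against, and I can only judge your argument on its own terms.

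Your overall strategy (extract a common factor of degree $s$ from a zero block of the Hilbert--Burch matrix forced by the gap in the character, then identify the two pieces by residuation) is a legitimate route, essentially Davis's decomposition theorem. But the justification you give for the step you yourself flag as the crux is wrong. The graded Betti table of $I_Z$ is \emph{not} determined by the numerical character (equivalently, by the Hilbert function): the relations in Lemma~\ref{lemma: propriet� del carattere numerico}\eqref{gradi_sizigie} only determine the differences $\#\{k:\alpha_k=e\}-\#\{j:\beta_j=e\}$ in each degree, and ghost generator/syzygy pairs can occur (already for four points, three of them collinear, versus four general points). Consequently it is false that the gap $n_{s-1}>n_s+1$ makes the generators and syzygies ``split into two degree ranges with nothing in between'': there may be generators and syzygies on both sides of $N:=n_s+1$ and even in degree $N$ itself. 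What actually saves the step --- and what you would need to prove --- is the counting identity
\[
\#\{j:\beta_j>N\}+\#\{k:\alpha_k\le N\}=t-1,
\]
which does follow from the cited difference relations by telescoping, precisely because $N$ is a value not attained by the character. The corresponding block of the $t\times(t-1)$ matrix vanishes for the elementary reason that its entries have negative degree, and Laplace expansion then gives a common factor $h$ of the minors $f_j$ with $\beta_j\le N$, of degree $\sum_{\alpha_k>N}\alpha_k-\sum_{\beta_j>N}\beta_j=s$. None of this computation appears in your write-up. Your closing remark that the same gap ``produces a splitting of the presentation matrix'' of $\C[Z]$ as an $R$-module is also unfounded: there every entry has degree $n_j-i\ge 1$, so no vanishing is forced by degree reasons.

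The bookkeeping in the second half is also garbled. The quotients $f_j/h$ (for $\beta_j\le N$) have degrees $\beta_j-s$ and lie in $I_Z:h=I_{\mathrm{Res}_C(Z)}$, not in $I_{Z\cap C}$; the remaining minors $f_j$ with $\beta_j>N$ need not be divisible by $h$ at all, so the roles of the two pieces are swapped in your text, and the degrees you attach to them do not match. You also conflate the entries $n_i$ of the character with degrees of minimal generators of the ideals of the pieces, whereas they are invariants of the $R$-module resolution of Definition~\ref{def: char num}. Finally, the identifications $\chi(Z\cap C)=(n_0,\dots,n_{s-1})$ and $\chi(\mathrm{Res}_C(Z))=(n_s-s,\dots,n_{d-1}-s)$ are asserted via ``uniqueness of minimal resolutions'' without showing that the blocks of $\Phi$ actually resolve $I_{Z\cap C}$ and $I_{\mathrm{Res}_C(Z)}$ (equivalently, without computing the two Hilbert functions in the residuation sequence, which in addition requires comparing $I_Z+(h)$ with its saturation). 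These are the genuinely nontrivial parts of the Ellia--Peskine argument, and as written they are missing.
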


These results allow us to prove the following theorem, generalizing \cite[Theorem 5.1]{ASS}. Observe that we can assume that the line $x_2=0$ is in general position with respect to $Z$, up to an orthogonal transformation.

\begin{theorem}\label{teorema finale}
Let $d\ge 3$ and let $Z \subseteq \p^2$ be a reduced subscheme of dimension $0$ and degree $d^2-d+1$.
Assume that
\begin{enumerate}
\item \label{item: 3 generatori in grado d}
 $\dim I_Z(d)=3$,
 \item \label{item: no d+1 collineari}no $d+1$ points of $Z$ are collinear, and
    \item \label{item: no kd} for any $k\in\{2, \dots, d-1\}$, no $kd$ points of $Z$ lie on a degree $k$ curve.
\end{enumerate}
Then there exists a tensor $T\in (\Sym^{d-1}\C^{3})^{\oplus 3}$ such that $E(T)=Z$.
\end{theorem}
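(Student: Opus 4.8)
The plan is to recover the minimal free resolution of $I_Z$ from the numerical character $\chi(Z)$, read off a linear syzygy among three degree-$d$ generators, and feed it into Lemma \ref{lem: i implica iii}. Throughout I work, by Remark \ref{non � restrittivo considerare i parzialmente simmetrici}, with the partially symmetric tensor we are trying to build, and I use the standing assumption that $x_2=0$ is in general position with respect to $Z$, so that $\chi(Z)=(n_0,\dots,n_{d-1})$ is defined as in Definition \ref{def: char num}.

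First I would record three numerical facts. Conditions \eqref{item: no d+1 collineari} and \eqref{item: no kd} force $Z$ to lie on no curve of degree $\le d-1$: such a curve of degree $k$ would contain all $d^2-d+1\ge kd$ points of $Z$, contradicting \eqref{item: no d+1 collineari} for $k=1$ and \eqref{item: no kd} for $2\le k\le d-1$. Hence $\dim I_Z(d-1)=0$, the integer ``$d$'' of Definition \ref{def: char num} equals $d$, and $n_i\ge d$ for all $i$. Since there are no generators below degree $d$, hypothesis \eqref{item: 3 generatori in grado d} gives exactly three minimal generators in degree $d$, so the first relation of Lemma \ref{lemma: propriet� del carattere numerico}\eqref{item: resolution} yields $\#\{i\mid n_i=d\}=2$, i.e. $n_{d-2}=n_{d-1}=d$. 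Finally Lemma \ref{lemma: propriet� del carattere numerico}\eqref{item: grado} gives $\sum_{i=0}^{d-1}(n_i-i)=d^2-d+1$.

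The heart of the argument is to show that $\chi(Z)$ is \emph{connected}, and this is the step I expect to be the main obstacle. Suppose instead $n_{s-1}>n_s+1$ for some $s$. Because $n_{d-2}=n_{d-1}=d$ the gap cannot occur at $s=d-1$, so $1\le s\le d-2$. Proposition \ref{ellia} then produces a curve $C$ of degree $s$ with $\chi(C\cap Z)=(n_0,\dots,n_{s-1})$, hence carrying $D_s:=\sum_{i=0}^{s-1}(n_i-i)$ points of $Z$. Writing $m_i=n_i-d\ge 0$, one has $\sum_{i=0}^{d-1}m_i=\binom{d-1}{2}$, and a short estimate shows that a gap at $s$ forces $\sum_{i=0}^{s-1}m_i\ge\binom{s}{2}$ (otherwise the tail $m_s,\dots,m_{d-1}$ is too small to make up the total), so that $D_s\ge sd$ when $s\ge2$ and $D_1\ge d+1$. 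This contradicts \eqref{item: no d+1 collineari} for $s=1$ and \eqref{item: no kd} for $2\le s\le d-2$. Therefore $\chi(Z)$ is connected; since the staircase $(2d-2,2d-3,\dots,d+1,d,d)$ is the unique connected character with the prescribed degree and exactly two entries equal to $d$, we get $\chi(Z)=(2d-2,\dots,d+1,d,d)$. As $\chi(Z)$ is connected no cancellation occurs in Lemma \ref{lemma: propriet� del carattere numerico}\eqref{item: resolution}, and this yields the minimal free resolution
\[
0\to S(-d-1)\oplus S(-2d+1)\xrightarrow{\ \Phi\ }S(-d)^{\oplus 3}\to I_Z\to 0.
\]

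It remains to produce the tensor. By the Hilbert--Burch theorem $I_Z$ is generated by the maximal minors $h_0,h_1,h_2$ of the $3\times 2$ matrix $\Phi$, whose first column consists of linear forms $l_0,l_1,l_2$; the associated column relation is the linear syzygy $l_0h_0+l_1h_1+l_2h_2=0$. I would then check that $l_0,l_1,l_2$ are linearly independent. If they spanned a space of dimension $\le 1$, the syzygy would degenerate into a nontrivial constant relation among $h_0,h_1,h_2$, which is impossible. If they spanned a plane, I pick two independent ones, say $l_0,l_1$, write $l_2=\alpha l_0+\beta l_1$, and rewrite the relation as $l_0(h_0+\alpha h_2)+l_1(h_1+\beta h_2)=0$; coprimality of $l_0$ and $l_1$ gives a nonzero form $B$ of degree $d-1$ with $h_0+\alpha h_2=-l_1B$ and $h_1+\beta h_2=l_0B$, whence $Z\subseteq V(B)\cup\bigl(V(l_0)\cap V(l_1)\bigr)$, so at least $d^2-d$ points of $Z$ lie on the degree-$(d-1)$ curve $V(B)$, contradicting \eqref{item: no kd} with $k=d-1$. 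Hence $l_0,l_1,l_2$ are independent. Taking $A\in\GL_3(\C)$ with $(l_0,l_1,l_2)^\top=A\,(x_0,x_1,x_2)^\top$ and setting $(f_0,f_1,f_2)^\top=A\,(h_0,h_1,h_2)^\top$, the forms $f_i$ still generate $I_Z$ and satisfy $x_0f_0+x_1f_1+x_2f_2=0$. Lemma \ref{lem: i implica iii} then provides $g_0,g_1,g_2\in\C[x_0,x_1,x_2]_{d-1}$ with $f_0=x_1g_2-x_2g_1$, $f_1=x_2g_0-x_0g_2$ and $f_2=x_0g_1-x_1g_0$, so that $Z=V(f_0,f_1,f_2)=E(T)$ for the partially symmetric tensor $T=(g_0,g_1,g_2)\in(\Sym^{d-1}\C^3)^{\oplus 3}$, as desired.
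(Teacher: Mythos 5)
Your proposal follows essentially the same route as the paper's proof: use the hypotheses to pin down the numerical character, extract a linear syzygy among three degree-$d$ generators, rule out dependence of the three linear forms via hypothesis \eqref{item: no kd} with $k=d-1$, and conclude with Lemma \ref{lem: i implica iii}. Two comments. First, your connectedness step is packaged differently from the paper's (which takes the \emph{maximal} gap $k$ and bounds the tail by $n_i\le 2d-2-i$ for $i\ge k$), but your sketched estimate does close: if $\sum_{i=0}^{s-1}m_i<\binom{s}{2}$ then $m_{s-1}<\tfrac{s-1}{2}$ by averaging, the gap gives $m_s<\tfrac{s-5}{2}$, and the tail $\sum_{i\ge s}m_i\le(d-2-s)m_s$ is then too small to reach $\binom{d-1}{2}$; since you single this out as the main obstacle, you should write the estimate out rather than leave it as a parenthetical. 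Second, the one step you assert without justification is that connectedness of $\chi(Z)$ implies ``no cancellation'' and hence the full resolution $0\to S(-d-1)\oplus S(-2d+1)\to S(-d)^{\oplus 3}\to I_Z\to 0$. The graded Betti numbers of points in $\p^2$ are not determined by the numerical character (equivalently, by the Hilbert function), and a generator together with a syzygy in a common degree $s$ with $d+2\le s\le 2d-2$ is not excluded by connectedness alone, so Hilbert--Burch in the precise $3\times 2$ form you invoke is not yet available. Fortunately you do not need it: the relation between $\chi(Z)$ and the Betti numbers gives $\#\{i\mid\alpha_i=d+1\}=\#\{i\mid\beta_i=d+1\}-1+2\ge 1$, and minimality forces such a degree-$(d+1)$ syzygy to have vanishing (constant) coefficients on any degree-$(d+1)$ generators, so it is exactly a linear syzygy $l_0h_0+l_1h_1+l_2h_2=0$ among the three degree-$d$ generators. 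This is precisely how the paper proceeds, and the remainder of your argument (independence of the $l_i$, the change of basis by $A$, and Lemma \ref{lem: i implica iii}) then goes through unchanged.
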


\begin{proof}
For $k=d-1$, assumption \eqref{item: no kd} ensures that $d$ is the minimum degree of a curve containing $Z$ (compare to Definition \ref{def: char num}). Let $\chi(Z)=(n_0,\dots,n_{d-1})$  be the numerical character of $Z$. Since $\dim I_Z(d)=3$, Lemma \ref{lemma: proprietà del carattere numerico}\eqref{item: resolution} implies that  
$$
n_{d-1}= n_{d-2}=d\mbox{ and } n_{d-3}\ge d+1.
$$
Now we claim that $\chi(Z)$ is connected. Indeed, assume by contradiction that $\chi(Z)$ is not connected and let $k\in\{1,\dots,d-2\}$ be the maximal integer such that
$$
n_{k-1} > n_k+1.
$$
By Proposition \ref{ellia}, there exists a curve of degree $k$ containing a subset $Z'$ of $Z$, with
$$\chi(Z^\prime)=(n_0,\dots,n_{k-1}).
$$
In order to find a contradiction, we are going to show that $\deg(Z')\ge kd$. Since
\begin{align*}
\sum _{i=0}^{d-2} n_i&= \sum _{i=0}^{d-1} n_i - 
n_{d-1} = 
\deg (Z) +\sum _{i=0}^{d-1} i -
d\\
&=d^2-d+1 + \frac{d(d-1)}{2} - d=
\frac{(d-1)(3d-2)}{2},
\end{align*}
 we can write
$$
\sum _{i=0}^{k-1} n_i =\frac{(d-1)(3d-2)}{2}-\sum _{i=k}^{d-2} n_i.
$$
If $i\in\{k,\dots,d-3\}$, then the maximality of $k$ implies that 
$n_i \le n_{i+1} +1$. Hence $n_i\leq n_{d-2}+(d-2-i)=2d-2-i$. Notice that the same inequality also holds for $i=d-2$. Thus
$$
\sum_{i=k}^{d-2} n_i \le (d-k-1)(2d-2) -\frac{(d-2)(d-1)}{2}+\frac{k(k-1)}{2}.
$$
Therefore
\begin{align*}
\sum _{i=0}^{k-1} n_i &\ge \frac{(d-1)(3d-2)}{2} -\left((d-k-1)(2d-2) -\frac{(d-2)(d-1)}{2}+\frac{k(k-1)}{2}\right)\\
&
=2k(d-1)-\frac{k(k-1)}{2}.
\end{align*}
It follows that
\begin{equation}\label{diseq}
    \deg (Z') =\sum _{i=0}^{k-1} (n_i-i)
\ge
2k(d-1)-\frac{2k(k-1)}{2}=2k(d-1)-k(k-1).
\end{equation}
So $Z^\prime$ is a subset of at least $2k(d-1)-k(k-1)$ points of $Z$ contained in a degree $k$ curve for some $k\in\{1,\dots,d-2\}$. If $k=1$, then we have $2d-2\ge d+1$ collinear points, in contradiction to assumption \eqref{item: no d+1 collineari}. If $k\ge 2$, then we have at least $kd$ points on a degree $k$ curve, in contradiction to assumption \eqref{item: no kd}. We deduce that $\chi(Z)$ is connected. In particular, $n_{d-3}=d+1$.

Next we show that
$\chi(Z)$ is strictly decreasing for
$i\in\{0,\dots,d-2\}$. Indeed, the lexicographically maximal connected numerical character with $n_{d-1}=
n_{d-2}=d$ and $n_{d-3}=d+1$ is
$$
\chi_{\rm max}=(2d-2,2d-3,2d-4,\dots,d+2,d+1,d,d),
$$
By Lemma \ref{lemma: proprietà del carattere numerico}\eqref{item: grado}, a set of points with this character would have degree
$$
d+\sum_{i=d}^{2d-2} i - \sum_{i=0}^{d-1}i = d+\dfrac{(2d-2)(2d-1)}{2}-d(d-1)= d^2-d+1=\deg Z,$$
thus $\chi(Z)=\chi_{\rm max}$. In particular, there is exactly one integer
among the $n_i$ 
equal to $d+1$, and precisely $n_{d-3}=d+1$. Therefore we can apply Lemma \ref{lemma: proprietà del carattere numerico}\eqref{item: resolution} to get
\begin{align*}
    \#\{i\mid  \alpha_i = d+1\}&=\#\{i \mid  \beta_i = d+1\}-\# \{i \mid  n_i = d+1\}+\# \{i \mid  n_i = d\}\\
&=\#\{i \mid\beta_i = d+1\}
-1+2 \ge 1.
\end{align*}
So there is at least one syzygy in degree $d+1$ among the three generators of degree $d$, of the form
\begin{equation}\label{prima_sizigia}
l_0 f_0 +l_1f_1 +l_2f_2\equiv 0.
\end{equation}
Next we observe that our geometric assumptions on $Z$ imply that the linear forms $l_0,l_1,l_2$ are linearly independent. Indeed, assume by contradiction that $l_2= \alpha l_0 +\beta l_1$ for some $\alpha,\beta\in\C$. By (\ref{prima_sizigia}) we have
$$
l_0 (f_0 + \alpha f_2) \equiv -l_1 (f_1 +\beta f_2),
$$
which implies that there exists a degree $d-1$ form $h$ such that
$$
f_0 +\alpha f_2 = l_1 h\mbox{ and } f_1 +\beta f_2 = -l_0 h.
$$
These relations imply that $I_{Z}$ is generated also by
$$
I_{Z}=(f_0,f_1,f_2)=(f_2, l_0 h, l_1 h).
$$
In particular, $Z$ contains $V(f_2,h)$, which are $(d-1)d$ points on the degree $d-1$ curve $V(h)$. But this contradicts our hypothesis (\ref{item: no kd}) with $k=d-1$. By writing explicitly the forms $l_0,l_1,l_2$, we can rewrite the relation
(\ref{prima_sizigia}) in the form
\begin{equation*}\label{nuova sizigia}
x_0 \tilde f_0 + x_1 \tilde f_1 +x_2 \tilde f_2=0,
\end{equation*}
where $\tilde f_0, \tilde f_1, \tilde f_2$ are three other generators of $I_{Z}$. By Lemma \ref{lem: i implica iii} we conclude that $\tilde f_0, \tilde f_1, \tilde f_2$ are the determinantal equation of the eigenscheme of a partially symmetric tensor $T$.
\end{proof}

In Theorem \ref{teorema finale}, we assumed that $Z$ is reduced. Actually, the same argument proves the analogous result for non-reduced zero-dimensional subschemes of $\p^2$.

\section*{Acknowledgments}
The authors are grateful to Matteo Gallet and Alessandro Logar for useful conversations, to Anna Seigal, Luca Sodomaco and Giorgio Ottaviani for their helpful comments, to Hirotachi Abo for having informed them about the preprint \cite{Abo} and to Aldo Conca for bringing Remark \ref{rem: saturate} to their attention. Finally, we thank Bernd Sturmfels for his encouragement and his positive feedback on the preliminary version of this work.

\bibliographystyle{plain}
\bibliography{ooms}

\end{document}